\documentclass[10pt]{amsart}

\usepackage{graphicx}
\usepackage{amsmath}
\usepackage{amssymb}

\numberwithin{equation}{section}
\parindent=0mm

\def\RR{{\mathbb R}}
\def\MM{{\mathbb M}}
\def\ZZ{{\mathbb Z}}

\def\LL{{\mathbb L}}
\def\WW{{\mathbb W}}
\def\DD{{\mathbb D}}

\def\Z{\mathcal{Z}}

\def\Q{\mathcal{Q}}
\def\GG{{\mathbb G}}

\def\det{{\rm det}}

\def\Aff{{\rm Aff}}

\def\spt{{\rm supp}}

\def\wto{\rightharpoonup}

\def\eps{\varepsilon}

\def\cupp{\mathop{\cup}}

\def\ell{l}
\def\mwto{\stackrel{*}{\wto}}
\def\Q{Q_\rho(x_0)}
\def\mint{{{\bf-}\!\!\!\!\!\!\hspace{-.1em}\int}}

\newtheorem{theorem}{Theorem}[section]
\newtheorem{lemma}[theorem]{Lemma}
\newtheorem{proposition}[theorem]{Proposition}
\newtheorem{corollary}[theorem]{Corollary}

\theoremstyle{definition}
\newtheorem{definition}[theorem]{Definition}

\theoremstyle{remark}
\newtheorem{remark}[theorem]{Remark}

\title{Homogenization of nonconvex integrals with convex growth}

\author{Omar Anza Hafsa}
\address{UNIVERSITE MONTPELLIER II, UMR-CNRS 5508, LMGC,  Place Eug\`ene Bataillon, 34095 Montpellier, France.}
\email{Omar.Anza-Hafsa@univ-montp2.fr}
\author{Jean-Philippe Mandallena}
\address{UNIVERSITE DE NIMES, Laboratoire MIPA (MathŽmatiques, Informatique, Physique et Applications), Site des Carmes, Place Gabriel P\'eri, 30021 N\^\i mes, France.}
\email{jean-philippe.mandallena@unimes.fr}

\keywords{Homogenization, nonconvex integrands, convex growth, determinant type constraints, hyperelasticity}

\begin{document}

\begin{abstract}
We study homogenization by $\Gamma$-convergence of periodic multiple integrals of the calculus of variations when the integrand can take infinite values outside of a convex set of matrices.
\end{abstract}

\maketitle

\section{Introduction}

In this paper we are concerned with homogenization by $\Gamma$-convergence of multiple integrals of type
\begin{equation}\label{Intro_Funct-1}
\int_\Omega W\left({x\over\eps},\nabla u(x)\right)dx,
\end{equation}
where $\Omega\subset\RR^d$ is a bounded open set with Lipschitz boundary, $u\in W^{1,p}(\Omega;\RR^m)$ with $p>1$, $W:\RR^d\times\MM^{m\times d}\to[0,\infty]$ is a Borel measurable function which is $p$-coercive, $1$-periodic with respect to its first variable and not necessarily convex with respect to its second variable and $\eps>0$ is a (small) parameter destined to tend to zero. This non-convex homogenization problem was studied for the first time by Braides in 1985 (see \cite{braides85} and \cite[Theorem 4.5 p. 111]{braides-defranceschi98}) and then by M\"uller in 1987 (see \cite[Theorem 1.3]{muller87}). It is proved that
if $W$ is of $p$-polynomial growth, i.e.,
\begin{equation}\label{intro_p-growth}
W(x,\xi)\leq c(1+|\xi|^p)\hbox{ for all }(x,\xi)\in\RR^d\times\MM^{m\times d}\hbox{ and some }c>0,
\end{equation}
then \eqref{Intro_Funct-1} $\Gamma$-converges, as the parameter $\eps$ tends to zero, to the homogeneous integral
\begin{equation}\label{Intro_Funct-hom}
\int_\Omega W_{\rm hom}(\nabla u(x))dx,
\end{equation}
where $u\in W^{1,p}(\Omega;\RR^m)$ and $W_{\rm hom}:\MM^{m\times d}\to[0,\infty]$ is given by the formula
\begin{equation}\label{Intro_Funct-hom-bis}
W_{\rm hom}(\xi)={\mathcal H}W(\xi):=\inf_{k\geq 1}\inf_{\phi\in W^{1,p}_0(kY;\RR^m)}\mint_{kY}W(x,\xi+\nabla\phi(x))dx
\end{equation}
with $Y:=]0,1[^d$. As is well known, because of the $p$-polynomial growth assumption \eqref{intro_p-growth}, this homogenization theorem is not compatible with the following two basic conditions of hyperelasticity: the non-interpenetration of the matter, i.e., $W(x,\xi)=\infty$ if and only if $\det(I+\xi)\leq 0$, and the necessity of an infinite amount of energy to compress a finite volume into zero volume, i.e., for every $x\in\RR^d$, $W(x,\xi)\to\infty$ as $\det(I+\xi)\to0$. It is then of interest to develop techniques for the homogenization of integrals like \eqref{Intro_Funct-1} when $W$ is not necessarily of $p$-polynomial growth: this is the general purpose of the present paper. For works in the same spirit, we refer the reader to \cite{oah-jpm-Leghmizi10,oah-jpm10} (see also \cite{benbelgacem00,sychev05,oah-jpm07,oah-jpm08a,oah10,sychev10} for the relaxation case).

In this paper, our main contribution (see Theorem \ref{general-homogenization-theorem} and Corollaries \ref{Main-Corollary-1} and \ref{Main-Corollary-2}) is to prove that for $p>d$, if $W$ takes infinite values outside a convex set $\GG$ of matrices and has a nice behavior near to the boundary $\partial\GG$  of $\GG$, then \eqref{Intro_Funct-1} $\Gamma$-converges, as the parameter $\eps$ tends to zero, to \eqref{Intro_Funct-hom} with $W_{\rm hom}$ given by the formula (see also Remark \ref{Remark-Intro-Whom})
$$
W_{\rm hom}(\xi)=\left\{
\begin{array}{ll}
\Z\mathcal{H}W(\xi):=\displaystyle\inf\limits_{\phi\in\Aff_0(Y;\RR^m)}\int_Y\mathcal{H}W(\xi+\nabla\phi(y))dy&\hbox{if }\xi\in{\rm int}(\GG)\\
\liminf\limits_{t\to 1}\Z\mathcal{H}W(t\xi)&\hbox{if }\xi\in\partial\GG\\
\infty&\hbox{otherwise,}
\end{array}
\right.
$$
which, in general, is different from the classical one \eqref{Intro_Funct-hom-bis}, where ${\rm int}(\GG)$ denotes the interior of $\GG$ and $\Aff_0(Y;\RR^m)$ is the space of continuous piecewise affine functions $\phi$ from $Y$ to $\RR^m$ such that $\phi=0$ on the boundary $\partial Y$ of $Y$.  Another interesting thing is the potential relevance of this result with respect to the basic conditions of hyperelasticity (see \S2.2 for more details).

The paper is organized as follows. In Section 2 we state the main results of the paper, i.e., Theorem \ref{general-homogenization-theorem} and Corollaries \ref{Main-Corollary-1} and \ref{Main-Corollary-2}, and indicate how these results could be applied in the framework of hyperelasticity (see Propositions \ref{Main-Prop-Application} and \ref{Intro-Application-Example-d=2}). Section 3 is devoted to the statements and proofs of auxiliary results needed in the proof of Theorem \ref{general-homogenization-theorem}. In particular, the key concept of ru-usc function, which roughly means that $W$ has nice behavior on $\partial \GG$, see \eqref{DeF-of-DelTa-W} and \eqref{singular-hypothesis}, is developed in \S 3.1 following the ideas introduced in \cite{oah10,oah-jpm10}. Finally, Theorem \ref{general-homogenization-theorem} is proved in Section 4.
\section{Main results}

\subsection{General results} Let $d,m\geq 1$ be two integers and let $p>1$ be a real number. Let $W:\RR^d\times\MM^{m\times d}\to[0,\infty]$ be a Borel measurable function which is $p$-coercive, i.e., there exists $c>0$ such that
\begin{equation}\label{coercivity}
W(x,\xi)\geq c|\xi|^p\hbox{ for all }(x,\xi)\in\RR^d\times\MM^{m\times d},
\end{equation}
and $1$-periodic with respect to its first variable, i.e.,
\begin{equation}\label{periodicity}
W(x+z,\xi)=W(x,\xi)\hbox{ for all }x\in\RR^d,\hbox{ all }z\in\ZZ^d\hbox{ and all }\xi\in\MM^{m\times d}.
\end{equation}
Let $G:\MM^{m\times d}\to[0,\infty]$ be a convex function such that $0\in{\rm int}(\GG)$, where $\GG$ denotes the effective domain of $G$. We assume that $W$ is of $G$-convex growth, i.e., there exist $\alpha,\beta>0$ such that
\begin{equation}\label{G-convex-growth}
\alpha G(\xi)\leq W(x,\xi)\leq\beta(1+G(\xi))\hbox{ for all }(x,\xi)\in\RR^d\times\MM^{m\times d}.
\end{equation}
Under \eqref{G-convex-growth} it is easy to see that, for each $x\in\RR^d$, the effective domain of $W(x,\cdot)$ is equal to $\GG$, i.e., ${\rm dom} W(x,\cdot)=\GG$ for all $x\in\RR^d$. For each $a\in L^1_{\rm loc}(\RR^d;]0,\infty])$ we define $\Delta_W^a:[0,1]\to]-\infty,\infty]$ by 
\begin{equation}\label{DeF-of-DelTa-W}
\Delta_W^a(t):=\sup_{x\in\RR^d}\sup_{\xi\in\GG}{W(x,t\xi)-W(x,\xi)\over a(x)+W(x,\xi)}
\end{equation}
and we further suppose that $W$ is periodically ru-usc (see \S 1.3 for more details), i.e., there exists $a\in L^1_{\rm loc}(\RR^d;]0,\infty])$ such that $a$ is $1$-periodic and         
\begin{equation}\label{singular-hypothesis}
\limsup\limits_{t\to 1}\Delta_W^a(t)\leq 0.
\end{equation}
Let $\Omega\subset\RR^d$ be a bounded open set with Lipschitz boundary and let $I_\eps, \widehat{\mathcal{H}I}, \widehat{\Z\mathcal{H}I}:W^{1,p}(\Omega;\RR^m)\to[0,\infty]$ be defined by:
\begin{itemize}
\item[\SMALL$\blacklozenge$] $\displaystyle I_\eps(u):=\int_\Omega W\left({x\over\eps},\nabla u(x)\right)dx$;
\item[\SMALL$\blacklozenge$] $\displaystyle \widehat{\mathcal{H}I}(u):=\int_\Omega \widehat{\mathcal{H}W}(\nabla u(x))dx$;
\item[\SMALL$\blacklozenge$] $\displaystyle \widehat{\Z\mathcal{H}I}(u):=\int_\Omega\widehat{\Z\mathcal{H}W}(\nabla u(x))dx$,
\end{itemize}
where $\eps>0$ is a (small) parameter and $\mathcal{H}W,\widehat{\mathcal{H}W},\Z\mathcal{H}W,\widehat{\Z\mathcal{H}W}:\MM^{m\times d}\to[0,\infty]$ are given by:
\begin{itemize}
\item[\SMALL$\blacklozenge$] $\displaystyle\mathcal{H}W(\xi):=\inf_{k\geq 1}\inf\left\{\mint_{kY}W(x,\xi+\nabla\phi(x))dx:\phi\in W^{1,p}_0(kY;\RR^m)\right\}$;
\item[\SMALL$\blacklozenge$] $\displaystyle\widehat{\mathcal{H}W}(\xi):=\liminf_{t\to1}\mathcal{H}W(t\xi)$;
\item[\SMALL$\blacklozenge$] $\displaystyle\Z\mathcal{H}W(\xi):=\inf\left\{\int_Y\mathcal{H}W(\xi+\nabla\phi(y))dy:\phi\in \Aff_0(Y;\RR^m)\right\}$;
\item[\SMALL$\blacklozenge$] $\displaystyle\widehat{\Z\mathcal{H}W}(\xi):=\liminf_{t\to 1}\Z\mathcal{H}W(t\xi)$
\end{itemize}
with $Y:=]0,1[^d$ and $\Aff_0(Y;\RR^m):=\big\{\phi\in\Aff(Y;\RR^m):\phi=0\hbox{ on }\partial Y\big\}$ where $\Aff(Y;\RR^m)$ denotes the space of continuous piecewise affine functions from $Y$ to $\RR^m$. The main result of the paper is the following.
\begin{theorem}\label{general-homogenization-theorem}
Let  $W:\RR^d\times\MM^{m\times d}\to[0,\infty]$ be a Borel measurable function satisfying  \eqref{coercivity}, \eqref{periodicity}, \eqref{G-convex-growth} and \eqref{singular-hypothesis} and let $u\in W^{1,p}(\Omega;\RR^m)$.
\begin{itemize}
\item[(i)] If $p>d$ and if $\{u_\eps\}_\eps\subset W^{1,p}(\Omega;\RR^m)$ is such that $\|u_\eps-u\|_{L^p(\Omega;\RR^m)}\to 0$, then
$$
\liminf_{\eps\to 0}I_\eps(u_\eps)\geq \widehat{\mathcal{H}I}(u).
$$
\item[(ii)] If $\Omega$ is strongly star-shaped, see Definition {\rm\ref{StronglyStar-Shaped-Def}}, then there exists $\{u_\eps\}_\eps\subset W^{1,p}(\Omega;\RR^m)$ such that $\|u_\eps-u\|_{L^p(\Omega;\RR^m)}\to 0$ and
$$
\limsup_{\eps\to 0}I_\eps(u_\eps)\leq \widehat{\Z\mathcal{H}I}(u).
$$
\end{itemize}
\end{theorem}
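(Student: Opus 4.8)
The plan is to treat the two parts separately, since the lower bound (i) and the upper bound (ii) rely on rather different machinery. For the lower bound, the basic strategy is to pass from the $\eps$-dependent periodic integrand to a homogenized one via a blow-up / De Giorgi slicing argument, but the non-standard growth forces us to work through the approximations $\mathcal{H}W(t\cdot)$ and only at the end let $t\to1$. First I would reduce to the case where $\liminf_{\eps\to0}I_\eps(u_\eps)<\infty$, so that along a subsequence $\sup_\eps I_\eps(u_\eps)<\infty$; by $p$-coercivity \eqref{coercivity} this yields a uniform $W^{1,p}$ bound, hence (since $p>d$) equi-Hölder continuity, so $u_\eps\to u$ not only in $L^p$ but uniformly, and $u\in W^{1,p}(\Omega;\RR^m)$ with $\nabla u_\eps\wto\nabla u$. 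Then I would localize: define the sequence of (non-negative) measures $\mu_\eps:=W(\cdot/\eps,\nabla u_\eps)\,dx\restriction\Omega$, extract a weak-$*$ limit $\mu$, and show that the Radon--Nikodym derivative of $\mu$ with respect to Lebesgue measure satisfies $d\mu/d\mathcal{L}^d(x_0)\geq \widehat{\mathcal{H}W}(\nabla u(x_0))$ at a.e.\ $x_0$. The proof of this pointwise inequality is the core: blow up at a Lebesgue point $x_0$ of $\nabla u$, rescale, use the affine boundary data $\xi:=\nabla u(x_0)$, and compare with the cell-problem infimum over $W^{1,p}_0(kY;\RR^m)$ that defines $\mathcal{H}W$; the periodicity \eqref{periodicity} is what lets one interpret the multiscale limit as a cell average. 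The ru-usc hypothesis \eqref{singular-hypothesis} enters precisely here: to cut boundary-layer corrections one perturbs the test function by a cutoff, which replaces $\xi$ by $t\xi$ with $t<1$, and \eqref{singular-hypothesis} guarantees that this costs at most $\Delta_W^a(t)\bigl(\text{something integrable}\bigr)$, which vanishes as $t\to1$; passing $t\to1$ produces exactly $\widehat{\mathcal{H}W}=\liminf_{t\to1}\mathcal{H}W(t\cdot)$ rather than $\mathcal{H}W$ itself. I expect this localization-plus-blow-up step, and in particular the careful bookkeeping of the $t<1$ truncation against \eqref{G-convex-growth}, to be the main obstacle; everything else is soft.

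For the upper bound (ii), the plan is a two-stage recovery-sequence construction. The outer target $\widehat{\Z\mathcal{H}W}(\xi)=\liminf_{t\to1}\Z\mathcal{H}W(t\xi)$ already has the structure of a relaxation of $\mathcal{H}W$ over piecewise affine functions with zero boundary data on $Y$, so I would first handle $u$ affine (or piecewise affine) with gradient $\xi\in{\rm int}(\GG)$: fix $t<1$ close to $1$ and a near-optimal $\phi\in\Aff_0(Y;\RR^m)$ for $\Z\mathcal{H}W(t\xi)$, and on each cell where $\phi$ has constant gradient $\eta$ (so $t\xi+\eta\in\GG$) insert, at scale $\eps$, a near-optimal periodic corrector for $\mathcal{H}W(t\xi+\eta)$ coming from the $k$-cell problem. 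Gluing these correctors along the triangulation and across the periodicity cells requires the usual cutoff near interfaces; strong star-shapedness of $\Omega$, via Definition \ref{StronglyStar-Shaped-Def}, is what allows one to dilate $u$ slightly so that its piecewise-affine approximations genuinely lie in $W^{1,p}$ with the right boundary values and the boundary layers can be absorbed — this is a standard use of star-shapedness in non-standard-growth $\Gamma$-limsup arguments. The $G$-convex growth \eqref{G-convex-growth} together with $p>d$ (again Hölder control) keeps all the inserted correctors $L^p$-small, so $u_\eps\to u$ in $L^p$, and \eqref{singular-hypothesis} is used once more to send $t\to1$ after the diagonalization in $\eps$, converting $\Z\mathcal{H}W(t\xi)$ into $\widehat{\Z\mathcal{H}W}(\xi)$. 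Finally, for general $u\in W^{1,p}(\Omega;\RR^m)$ one approximates $u$ by piecewise affine maps (density of $\Aff(\Omega;\RR^m)$ in $W^{1,p}$, with the star-shaped dilation trick to control the effective-domain constraint), applies the affine case on each simplex, and diagonalizes; lower semicontinuity of $\widehat{\Z\mathcal{H}I}$ — which would itself be one of the auxiliary results of Section 3 — closes the estimate $\limsup_\eps I_\eps(u_\eps)\leq\widehat{\Z\mathcal{H}I}(u)$.
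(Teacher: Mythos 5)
Your proposal matches the paper's proof structure closely in both parts: for (i), the localization via Radon--Nikodym derivative of the weak-$*$ limit of $\mu_\eps=W(\cdot/\eps,\nabla u_\eps)\,dx$, the cut-off against an affine map with the $t<1$ dilation to stay in $\GG$, the Akcoglu--Krengel subadditive argument to reach $\mathcal{H}W(t\nabla u(x_0))$, and the final $t\to 1$ via \eqref{singular-hypothesis}; for (ii), the dilation $u\mapsto tu$, the star-shapedness-based piecewise-affine approximation (Proposition \ref{Approx-Prop-CPAF-2}), the $\Z$-level corrector on each simplex (Proposition \ref{Prop1-for-Gamma-limsup}), the $\eps$-scale periodic corrector (Proposition \ref{Prop2-for-Gamma-limsup}), and a four-index diagonalization. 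One small slip at the very end of (ii): what "closes the estimate" is not lower semicontinuity of $\widehat{\Z\mathcal{H}I}$ (which would give a bound in the \emph{wrong} direction for a $\limsup$) but the ru-usc property of $\widehat{\Z\mathcal{H}W}$ (Theorem \ref{Extension-Result-for-ru-usc-Functions}(ii) via Propositions \ref{Coro-Prop-ru-usc1}--\ref{Coro-Prop-ru-usc2}), which yields $\limsup_{t\to1}\int_\Omega\Z\mathcal{H}W(t\nabla u)\,dx\leq\int_\Omega\widehat{\Z\mathcal{H}W}(\nabla u)\,dx$ — exactly the upper-bound direction you need; you already invoke ru-usc correctly two sentences earlier, so this is likely just a misnomer rather than a genuine gap.
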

Let $I_{\rm hom}:W^{1,p}(\Omega;\RR^m)\to[0,\infty]$ be defined by
$$
I_{\rm hom}(u):=\int_\Omega W_{\rm hom}(\nabla u(x))dx
$$
with $W_{\rm hom}:\MM^{m\times d}\to[0,\infty]$ given by
$$
W_{\rm hom}(\xi):=\left\{
\begin{array}{ll}
\Z\mathcal{H}W(\xi)&\hbox{if }\xi\in{\rm int}(\GG)\\
\liminf\limits_{t\to 1}\Z\mathcal{H}W(t\xi)&\hbox{if }\xi\in\partial\GG\\
\infty&\hbox{otherwise,}
\end{array}
\right.
$$
where ${\rm int}(\GG)$ denotes the interior of $\GG$. The following homogenization result is a consequence of Theorem \ref{general-homogenization-theorem}.
\begin{corollary}\label{Main-Corollary-1}
Let  $W:\RR^d\times\MM^{m\times d}\to[0,\infty]$ be a Borel measurable function satisfying  \eqref{coercivity}, \eqref{periodicity}, \eqref{G-convex-growth} and \eqref{singular-hypothesis}. If $p>d$ and $\Omega$ is strongly star-shaped then 
$$
\Gamma(L^p)\hbox{-}\lim\limits_{\eps\to0} I_\eps=I_{\rm hom}.
$$
\end{corollary}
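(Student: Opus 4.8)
The plan is to read off the $\Gamma$-convergence statement from the two one-sided estimates of Theorem~\ref{general-homogenization-theorem}, after comparing the regularized densities $\widehat{\mathcal H W}$ and $\widehat{\Z\mathcal H W}$ with $W_{\rm hom}$. Extend $I_\eps$ and $I_{\rm hom}$ by $+\infty$ on $L^p(\Omega;\RR^m)\setminus W^{1,p}(\Omega;\RR^m)$, so that $\Gamma(L^p)\text{-}\lim_\eps I_\eps=I_{\rm hom}$ amounts to: (a) $\liminf_\eps I_\eps(u_\eps)\geq I_{\rm hom}(u)$ whenever $u_\eps\to u$ in $L^p(\Omega;\RR^m)$; and (b) for every $u$ there is $u_\eps\to u$ in $L^p(\Omega;\RR^m)$ with $\limsup_\eps I_\eps(u_\eps)\leq I_{\rm hom}(u)$. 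I claim it suffices to prove
\begin{equation}\label{plan-sandwich}
\widehat{\Z\mathcal H W}(\xi)\ \leq\ W_{\rm hom}(\xi)\ \leq\ \widehat{\mathcal H W}(\xi)\qquad\text{for all }\xi\in\MM^{m\times d},
\end{equation}
for then, integrating over $\Omega$, $\widehat{\Z\mathcal H I}(u)\leq I_{\rm hom}(u)\leq\widehat{\mathcal H I}(u)$ for every $u\in W^{1,p}(\Omega;\RR^m)$. To get (a): if $L:=\liminf_\eps I_\eps(u_\eps)<\infty$, pass to a subsequence along which $I_\eps(u_\eps)\to L$; these $u_\eps$ lie in $W^{1,p}(\Omega;\RR^m)$ with $I_\eps(u_\eps)$ bounded, so by the coercivity~\eqref{coercivity} and the $L^p$-convergence they are bounded in $W^{1,p}(\Omega;\RR^m)$, and since $\Omega$ has Lipschitz boundary the Rellich embedding forces $u\in W^{1,p}(\Omega;\RR^m)$; Theorem~\ref{general-homogenization-theorem}(i) (this is where $p>d$ is used) then gives $L\geq\widehat{\mathcal H I}(u)\geq I_{\rm hom}(u)$, and if $L=+\infty$ there is nothing to prove. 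To get (b): if $u\in W^{1,p}(\Omega;\RR^m)$, Theorem~\ref{general-homogenization-theorem}(ii) (using that $\Omega$ is strongly star-shaped) produces $u_\eps\to u$ in $L^p(\Omega;\RR^m)$ with $\limsup_\eps I_\eps(u_\eps)\leq\widehat{\Z\mathcal H I}(u)\leq I_{\rm hom}(u)$, and if $u\notin W^{1,p}(\Omega;\RR^m)$ the bound is trivial since $I_{\rm hom}(u)=+\infty$. Combined with $\Gamma\text{-}\liminf\leq\Gamma\text{-}\limsup$ this yields $\Gamma(L^p)\text{-}\lim_\eps I_\eps=I_{\rm hom}$ (and, a posteriori, $\widehat{\mathcal H I}=\widehat{\Z\mathcal H I}=I_{\rm hom}$).

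So everything hinges on \eqref{plan-sandwich}, which I would prove by cases on the position of $\xi$ relative to $\GG$, relying on a few elementary consequences of the hypotheses: taking the null test function in the definitions of $\mathcal H W$ and $\Z\mathcal H W$ gives $\Z\mathcal H W\leq\mathcal H W\leq\beta(1+G)$ pointwise (so all the quantities are finite on $\GG$, and $\widehat{\Z\mathcal H W}\leq\widehat{\mathcal H W}$ since $\widehat{\cdot}$ is monotone), while the lower bound in~\eqref{G-convex-growth}, Jensen's inequality, and the vanishing of the mean gradient of those test functions give $\mathcal H W(\eta)\geq\alpha G(\eta)$ and $\Z\mathcal H W(\eta)\geq\alpha G(\eta)$ for all $\eta$, hence $\mathcal H W(\eta)=\Z\mathcal H W(\eta)=+\infty$ for $\eta\notin\GG$. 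Therefore, if $\xi\notin\overline\GG$ then $t\xi\notin\GG$ for $t$ near $1$ and both sides of~\eqref{plan-sandwich} equal $+\infty=W_{\rm hom}(\xi)$; and if $\xi\in\partial\GG$ then $\widehat{\Z\mathcal H W}(\xi)=\liminf_{t\to1}\Z\mathcal H W(t\xi)=W_{\rm hom}(\xi)$ by the very definition of $W_{\rm hom}$, while $\widehat{\mathcal H W}(\xi)\geq\widehat{\Z\mathcal H W}(\xi)$ closes this case.

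It remains to treat $\xi\in{\rm int}(\GG)$, where $W_{\rm hom}(\xi)=\Z\mathcal H W(\xi)$; using $\widehat{\Z\mathcal H W}\leq\widehat{\mathcal H W}$ once more, \eqref{plan-sandwich} reduces here to $\widehat{\Z\mathcal H W}(\xi)=\Z\mathcal H W(\xi)$, that is, to $\lim_{t\to1}\Z\mathcal H W(t\xi)=\Z\mathcal H W(\xi)$, and this is the crux. The upper inequality $\limsup_{t\to1}\Z\mathcal H W(t\xi)\leq\Z\mathcal H W(\xi)$ would follow from $\Z\mathcal H W$ being again periodically ru-usc (inherited from $W$), and the lower inequality $\liminf_{t\to1}\Z\mathcal H W(t\xi)\geq\Z\mathcal H W(\xi)$ from lower semicontinuity of $\Z\mathcal H W$ on ${\rm int}(\GG)$. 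These two structural properties of the homogenized/relaxed density --- together with the measurability of $W_{\rm hom}$, $\widehat{\mathcal H W}$ and $\widehat{\Z\mathcal H W}$ needed to integrate~\eqref{plan-sandwich} --- are precisely what is prepared among the auxiliary results of Section~3, and it is there that $p>d$ and~\eqref{G-convex-growth} genuinely enter, the piecewise-affine envelope $\Z$ being well behaved in that range. Note that this does not assert $\Z\mathcal H W=\mathcal H W$ --- $\Z\mathcal H W$ may be strictly smaller --- only that the radial $\liminf$-regularizations of both collapse onto $W_{\rm hom}$; that is the delicate point and the real obstacle, whereas, granting it, Corollary~\ref{Main-Corollary-1} is exactly the short sandwiching argument above.
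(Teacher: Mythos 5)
Your proposal is correct and follows essentially the same route as the paper's proof: deduce from Theorem~\ref{general-homogenization-theorem} that the $\Gamma(L^p)$-limit is squeezed between $\widehat{\Z\mathcal H I}$ and $\widehat{\mathcal H I}$, and then identify $\widehat{\Z\mathcal H W}$ with $W_{\rm hom}$ using the auxiliary results of Section~3, namely that $\Z\mathcal H W$ is ru-usc (Propositions~\ref{Coro-Prop-ru-usc2} and~\ref{Coro-Prop-ru-usc1}) and continuous on ${\rm int}(\GG)$ (Lemma~\ref{Fonseca-Lemma}), which is exactly what Corollary~\ref{Corollary-used-in-intro} packages together; your sandwiching by cases on the position of $\xi$ relative to $\GG$ is just a slight reorganization of that identification. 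One factual correction: the assertion that $p>d$ ``genuinely enters'' in the structural properties of $\Z$ prepared in Section~3 is wrong --- those auxiliary results hold for any $p>1$; the hypothesis $p>d$ is used only in the proof of Theorem~\ref{general-homogenization-theorem}(i), for the embedding of $W^{1,p}$ into $L^\infty$ and the a.e.\ differentiability of $u$, as you yourself correctly note earlier in the proposal.
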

\begin{proof}
As $\widehat{\Z{\mathcal H}I}\leq\widehat{{\mathcal H}I}$, from Theorem \ref{general-homogenization-theorem} we deduce that 
$$
\left(\Gamma(L^p)\hbox{-}\lim\limits_{\eps\to0} I_\eps\right)(u)=\widehat{\Z{\mathcal H}I}(u)=\int_\Omega\widehat{\Z{\mathcal H}W}(\nabla u(x))dx
$$
for all $u\in W^{1,p}(\Omega;\RR^m)$. Denote the effective domain of $\Z{\mathcal H}W$ by $\Z{\mathcal H}\WW$. As ${\rm dom} W(x,\cdot)=\GG$ for all $x\in\RR^d$ it is easy to see that $\Z{\mathcal H}\WW=\GG$. On the other hand, as $\GG$ is convex we have $t\overline{\GG}\subset{\rm int}(\GG)$ for all $t\in]0,1[$, and so  $\widehat{\Z{\mathcal H}W}=W_{\rm hom}$ by Corollary \ref{Corollary-used-in-intro}.
\end{proof}

\begin{remark}\label{Remark-Intro-Whom}
Under the assumptions of Corollary \ref{Main-Corollary-1} we have $W_{\rm hom}=\overline{{\mathcal H}W}$ with $\overline{{\mathcal H}W}$ denoting the lsc envelope of ${\mathcal H}W$. Indeed, as $\widehat{\Z{\mathcal H}I}\leq\widehat{{\mathcal H}I}$, from Theorem \ref{general-homogenization-theorem} we see that 
$
\Gamma(L^p)\hbox{-}\lim_{\eps\to0} I_\eps=\widehat{{\mathcal H}I},
$
and consequently $\widehat{{\mathcal H}I}=I_{\rm hom}$ by Corollary \ref{Main-Corollary-1}. Thus $W_{\rm hom}=\widehat{{\mathcal H}W}$. Denote the effective domain of ${\mathcal H}W$ by ${\mathcal H}\WW$. As ${\rm dom}W(x,\cdot)=\GG$ for all $x\in\RR^d$ we have ${\mathcal H}\WW=\GG$ where, because of $\GG$ is convex, $t\overline{\GG}\subset{\rm int}(\GG)$ for all $t\in]0,1[$. On the other hand, as $W$ satisfies \eqref{singular-hypothesis}, from Proposition \ref{Coro-Prop-ru-usc2} we can assert that ${\mathcal H}W$ is ru-usc (see Definition \ref{ru-usc-Def}) and so $\widehat{{\mathcal H}W}=\overline{{\mathcal H}W}$ by Theorem \ref{Extension-Result-for-ru-usc-Functions}(iii).
\end{remark}

To be complete, let us give the Dirichlet version of Corollary \ref{Main-Corollary-1}. For each $\eps>0$, let $J_\eps:W^{1,p}_0(\Omega;\RR^m)\to[0,\infty]$ be defined by
$$
J_\eps(u):=\left\{
\begin{array}{ll}
I_\eps(u)&\hbox{if }u\in W^{1,p}_0(\Omega;\RR^m)\\
\infty&\hbox{otherwise.}
\end{array}
\right.
$$

Using the Dirichlet version of Theorem \ref{general-homogenization-theorem} and arguing as in the proof of Corollary \ref{Main-Corollary-1} we can establish the following result. 

\begin{corollary}\label{Main-Corollary-2}
Let  $W:\RR^d\times\MM^{m\times d}\to[0,\infty]$ be a Borel measurable function satisfying  \eqref{coercivity}, \eqref{periodicity}, \eqref{G-convex-growth} and \eqref{singular-hypothesis}. If $p>d$ then 
$$
\Gamma(L^p)\hbox{-}\lim\limits_{\eps\to0} J_\eps=J_{\rm hom}
$$
with $J_{\rm hom}:W^{1,p}(\Omega;\RR^m)\to[0,\infty]$ given by
$$
J_{\rm hom}(u):=\left\{
\begin{array}{ll}
I_{\rm hom}(u)&\hbox{if }u\in W^{1,p}_0(\Omega;\RR^m)\\
\infty&\hbox{otherwise.}
\end{array}
\right.
$$
\end{corollary}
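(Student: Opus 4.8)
The plan is to deduce Corollary \ref{Main-Corollary-2} from the Dirichlet version of Theorem \ref{general-homogenization-theorem} by following, almost verbatim, the argument already used for Corollary \ref{Main-Corollary-1}, with the one structural simplification that on $W^{1,p}_0(\Omega;\RR^m)$ one never needs to pass from $\widehat{\Z\mathcal{H}I}$ to anything smaller, so the star-shapedness of $\Omega$ drops out. First I would invoke the Dirichlet analogue of Theorem \ref{general-homogenization-theorem}: for $p>d$, the lower bound $\liminf_{\eps\to0}J_\eps(u_\eps)\geq\widehat{\mathcal{H}I}(u)$ holds for every recovery-type sequence $u_\eps\to u$ in $L^p$ with $u_\eps\in W^{1,p}_0(\Omega;\RR^m)$ (and trivially when $u\notin W^{1,p}_0$, since then one may assume $\liminf_\eps J_\eps(u_\eps)<\infty$ forces $u_\eps$ bounded in $W^{1,p}$, hence $u\in W^{1,p}_0(\Omega;\RR^m)$), while the upper bound provides, for $u\in W^{1,p}_0(\Omega;\RR^m)$, a sequence $u_\eps\in W^{1,p}_0(\Omega;\RR^m)$ with $u_\eps\to u$ in $L^p$ and $\limsup_{\eps\to0}J_\eps(u_\eps)\leq\widehat{\Z\mathcal{H}I}(u)$.

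Next, using $\widehat{\Z\mathcal{H}I}\leq\widehat{\mathcal{H}I}$ (which follows from $\Z\mathcal{H}W\leq\mathcal{H}W$, itself immediate from the definitions since $\Aff_0(Y;\RR^m)\subset W^{1,p}_0(Y;\RR^m)$ and $\mathcal{H}W(\xi)=\inf_k\mint_{kY}\cdots\leq\mint_Y W(x,\xi)dx$, so that Jensen applied after the $k$-infimum is not needed; more precisely $\mathcal{H}W$ is already the homogenized density and $\Z\mathcal{H}W$ its cell-problem contraction), the two bounds sandwich the $\Gamma$-limit and give
$$
\left(\Gamma(L^p)\hbox{-}\lim_{\eps\to0}J_\eps\right)(u)=\widehat{\Z\mathcal{H}I}(u)=\int_\Omega\widehat{\Z\mathcal{H}W}(\nabla u(x))dx
$$
for all $u\in W^{1,p}_0(\Omega;\RR^m)$, and $+\infty$ otherwise. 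It remains to identify the integrand $\widehat{\Z\mathcal{H}W}$ with $W_{\rm hom}$. For this I would repeat the end of the proof of Corollary \ref{Main-Corollary-1}: the effective domain $\Z\mathcal{H}\WW$ of $\Z\mathcal{H}W$ equals $\GG$ because ${\rm dom}\,W(x,\cdot)=\GG$ for every $x$ (hence ${\rm dom}\,\mathcal{H}W=\GG$ and, $\GG$ being convex with $0\in{\rm int}(\GG)$, also ${\rm dom}\,\Z\mathcal{H}W=\GG$); since $\GG$ is convex we have $t\overline{\GG}\subset{\rm int}(\GG)$ for all $t\in\,]0,1[$, so Corollary \ref{Corollary-used-in-intro} applies and yields $\widehat{\Z\mathcal{H}W}=W_{\rm hom}$ pointwise on $\MM^{m\times d}$. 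Substituting this into the displayed formula shows that the $\Gamma(L^p)$-limit of $J_\eps$ coincides with $J_{\rm hom}$, which is the claim.

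The only point requiring a little care — and the place where the Dirichlet setting genuinely differs from Corollary \ref{Main-Corollary-1} — is the liminf inequality for target functions $u$ not in $W^{1,p}_0(\Omega;\RR^m)$, and more generally the verification that the recovery sequence produced by part (ii) of the Dirichlet theorem can be taken in $W^{1,p}_0(\Omega;\RR^m)$ without the strong star-shapedness hypothesis on $\Omega$. I expect this is exactly what the ``Dirichlet version of Theorem \ref{general-homogenization-theorem}'' is designed to supply: imposing zero boundary data is a constraint compatible with the boundary-layer construction regardless of the shape of $\Omega$, because the competitors in the upper bound already vanish near $\partial\Omega$, so no star-shaped cut-off is needed. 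Everything else is a routine transcription of the proof of Corollary \ref{Main-Corollary-1}, with $I_\eps$, $I_{\rm hom}$, $W^{1,p}(\Omega;\RR^m)$ replaced by $J_\eps$, $J_{\rm hom}$, $W^{1,p}_0(\Omega;\RR^m)$ throughout.
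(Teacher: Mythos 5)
Your proposal follows the paper's own (merely sketched) argument: invoke the Dirichlet version of Theorem \ref{general-homogenization-theorem}, sandwich the $\Gamma$-limit between $\widehat{\mathcal{H}I}$ and $\widehat{\Z\mathcal{H}I}$ using $\widehat{\Z\mathcal{H}I}\leq\widehat{\mathcal{H}I}$, identify $\widehat{\Z\mathcal{H}W}=W_{\rm hom}$ via Corollary \ref{Corollary-used-in-intro}, and handle $u\notin W^{1,p}_0$ via coercivity and weak closedness of $W^{1,p}_0$. Two small remarks: the justification of $\Z\mathcal{H}W\leq\mathcal{H}W$ needs none of the Jensen-flavoured commentary you offer, since taking $\phi\equiv0$ in the definition of $\Z\mathcal{H}W$ gives it immediately; and where you speculate about why strong star-shapedness can be dropped, the paper pins it down precisely: in the Dirichlet setting one replaces Lemma \ref{Approx-Lemma-CPAF} (whose proof needs star-shapedness) by \cite[Proposition 2.8 p.\ 292]{ekeland-temam74}, which approximates $W^{1,p}_0$ functions by $\Aff_0$ functions on arbitrary bounded Lipschitz domains.
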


The main difference with Corollary \ref{Main-Corollary-1} is that we do not need to assume that $\Omega$ is strongly star-shaped. Roughly, this comes from the fact that we can use \cite[Proposition 2.8 p. 292]{ekeland-temam74} instead of Lemma \ref{Approx-Lemma-CPAF}. To reduce technicalities and emphasize the essential difficulties, in the present paper we have restricted our attention on Theorem \ref{general-homogenization-theorem} and Corollary \ref{Main-Corollary-1}. The details of the proof of Corollary \ref{Main-Corollary-2} are left to the reader.
\subsection{Towards applications in hyperelasticity} Let $d\geq 1$ be an integer and let $p>d$ be a real number. Given a convex function $g:\MM^{d\times d}\to[0,\infty]$ and a Borel function $h:\RR\to[0,\infty]$ such that $h(t)=\infty$ if and only if $t\leq 0$ and $h(t)\to\infty$ as $t\to0$, we consider $\DD[g;h]\subset\MM^{d\times d}$  given by
$$
\DD[g;h]:=\Big\{\xi\in\MM^{d\times d}:h\big(\det(I+\xi)\big)\leq g(\xi)<\infty\Big\}
$$
and we define the convex function $G:\MM^{d\times d}\to[0,\infty]$ by
\begin{equation}\label{Intro-Application-Eq1}
G(\xi):=\left\{
\begin{array}{ll}
|\xi|^p+g(\xi)&\hbox{if }\xi\in\GG\\
\infty&\hbox{otherwise,}
\end{array}
\right.
\end{equation}
where $\GG$ is a convex subset of $\DD[g;h]$ such that $0\in{\rm int}(\GG)$. Let $W:\RR^d\times\MM^{d\times d}\to[0,\infty]$ be defined by
\begin{equation}\label{Appli-d=2-Eq1}
W(x,\xi):=\left\{
\begin{array}{ll}
F(x,\xi)+g(\xi)&\hbox{if }\xi\in\GG\\
\infty&\hbox{otherwise,}
\end{array}
\right.
\end{equation}
where $F:\RR^d\times\MM^{d\times d}\to[0,\infty]$ is a quasiconvex function, $1$-periodic with respect to its first variable and of $p$-polynomial growth, i.e., there exist $c,C>0$ such that
\begin{equation}\label{Intro-Application-p-polynomial-Growth}
c|\xi|^p\leq F(x,\xi)\leq C(1+|\xi|^p)
\end{equation}
for all $(x,\xi)\in\RR^d\times\MM^{d\times d}$. The following proposition makes clear the fact that such a $W$ is consistent with the assumptions of Corollaries \ref{Main-Corollary-1} and \ref{Main-Corollary-2} as well as with the two basic conditions of hyperelasticity, i.e., the non-interpenetration of the matter and the necessity of an infinite amount of energy to compress a finite volume of matter into zero volume.

\begin{proposition}\label{Main-Prop-Application}
Let $W:\RR^d\times\MM^{d\times d}\to[0,\infty]$ be defined as above. 
Then{\rm:}
\begin{itemize}
\item[(i)] $W$ is $p$-coercive{\rm;}
\item[(ii)] $W$ is $1$-periodic with respect to the first variable{\rm;}
\item[(iii)] $W$ satisfies \eqref{G-convex-growth} with $G$ given by \eqref{Intro-Application-Eq1}{\rm;}
\item[(iv)] $W$ satisfies \eqref{singular-hypothesis} with $a\equiv 2${\rm;}
\item[(v)] for every $(x,\xi)\in\RR^d\times\GG$, $W(x,\xi)<\infty$ if and only if $\det(I+\xi)>0${\rm;}
\item[(vi)] for every $x\in\RR^d$, $W(x,\xi)\to\infty$ as $\det(I+\xi)\to0$.
\end{itemize}
\end{proposition}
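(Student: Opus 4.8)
The plan is to verify the six items in turn. Items (i)--(iii), (v) and (vi) will follow by elementary manipulations from the definition \eqref{Appli-d=2-Eq1} of $W$, the $p$-polynomial growth \eqref{Intro-Application-p-polynomial-Growth} of $F$, and the defining inclusion $\GG\subset\DD[g;h]$; the only item needing real work is (iv). For (i) I note that on $\GG$ one has $W(x,\xi)=F(x,\xi)+g(\xi)\ge F(x,\xi)\ge c|\xi|^p$ since $g\ge0$, while $W(x,\xi)=\infty\ge c|\xi|^p$ off $\GG$, so \eqref{coercivity} holds with the same constant $c$. Item (ii) is immediate since $F$ is $1$-periodic in its first variable and neither $g$ nor $\GG$ depends on $x$. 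For (iii), $G$ in \eqref{Intro-Application-Eq1} is convex, being the restriction to the convex set $\GG$ (extended by $+\infty$) of the sum of the convex functions $\xi\mapsto|\xi|^p$ and $g$, and $0\in{\rm int}(\GG)$ by hypothesis; on $\GG$, combining $c|\xi|^p\le F(x,\xi)\le C(1+|\xi|^p)$ with $0\le g(\xi)<\infty$ gives $\min\{c,1\}\,G(\xi)\le W(x,\xi)\le\max\{C,1\}\,(1+G(\xi))$, and both inequalities are trivial off $\GG$; hence \eqref{G-convex-growth} holds with $\alpha=\min\{c,1\}$ and $\beta=\max\{C,1\}$.

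For (v) and (vi) the key remark is that $\xi\in\GG\subset\DD[g;h]$ forces $h(\det(I+\xi))\le g(\xi)<\infty$, hence $\det(I+\xi)>0$ because $h$ is infinite exactly on $]-\infty,0]$; moreover $W(x,\xi)=F(x,\xi)+g(\xi)<\infty$ for every $(x,\xi)\in\RR^d\times\GG$, $F(x,\cdot)$ being finite by \eqref{Intro-Application-p-polynomial-Growth}. Thus both sides of the equivalence in (v) hold throughout $\RR^d\times\GG$, so (v) is trivially true. For (vi) I fix $x$ and $M>0$: if $\xi\notin\GG$ then $W(x,\xi)=\infty>M$, while if $\xi\in\GG$ then $W(x,\xi)\ge g(\xi)\ge h(\det(I+\xi))$, and since $h(t)\to\infty$ as $t\to0$ and $h\equiv\infty$ on $]-\infty,0]$ there is $\delta>0$ with $h(t)>M$ whenever $|t|<\delta$; hence $W(x,\xi)>M$ as soon as $|\det(I+\xi)|<\delta$.

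The substance lies in (iv), where I want $\limsup_{t\to1}\Delta_W^a(t)\le0$ with $a\equiv 2$. Since $\GG$ is convex and $0\in\GG$, for every $t\in[0,1]$ and $\xi\in\GG$ one has $t\xi=t\xi+(1-t)0\in\GG$, so $W(x,t\xi)=F(x,t\xi)+g(t\xi)$ is finite and
\[
W(x,t\xi)-W(x,\xi)=\big(F(x,t\xi)-F(x,\xi)\big)+\big(g(t\xi)-g(\xi)\big).
\]
Convexity of $g$ yields $g(t\xi)\le tg(\xi)+(1-t)g(0)$, hence $g(t\xi)-g(\xi)\le(1-t)g(0)$ using $g\ge0$ and $g(0)<\infty$ (as $0\in\GG$). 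For the $F$-part I will invoke the standard fact that a quasiconvex integrand satisfying \eqref{Intro-Application-p-polynomial-Growth} is locally Lipschitz, with $|F(x,\xi)-F(x,\eta)|\le c'(1+|\xi|^{p-1}+|\eta|^{p-1})|\xi-\eta|$ for a constant $c'$ independent of $x$; applied with $\eta=t\xi$ this gives $|F(x,t\xi)-F(x,\xi)|\le c''(1-t)(1+|\xi|^p)$ for $t$ close to $1$. Inserting the two estimates into \eqref{DeF-of-DelTa-W} and using $W(x,\xi)\ge c|\xi|^p$ together with the boundedness of $(1+|\xi|^p)/(2+c|\xi|^p)$ in $\xi$, I get $\Delta_W^a(t)\le c'''(1-t)$ for $t$ near $1$, whence $\limsup_{t\to1}\Delta_W^a(t)\le0$, which is \eqref{singular-hypothesis} with $a\equiv2$.

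The only genuine obstacle is the local Lipschitz estimate for the quasiconvex part $F$: $p$-polynomial growth by itself does not give \eqref{singular-hypothesis} with a bounded $a$, and quasiconvexity enters precisely through that Lipschitz property, which is what produces the decisive gain of a factor $1-t$ in the numerator of $\Delta_W^a$. All the remaining verifications, as sketched above, are routine.
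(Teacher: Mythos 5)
Your proof is correct and follows essentially the same route as the paper's. Items (i)--(iii), (v), (vi) are handled identically, and in (iv) you invoke the same key ingredient: the local Lipschitz estimate for the quasiconvex function $F$ with $p$-polynomial growth, combined with convexity of $g$, which produces the decisive factor $(1-t)$. The only cosmetic difference in (iv) is that you bound the $F$-increment by $c''(1-t)(1+|\xi|^p)$ and then divide through by $2+W(x,\xi)\ge 2+c|\xi|^p$, whereas the paper substitutes $|\xi|^p\le F(x,\xi)/c$ first to get $F(x,t\xi)-F(x,\xi)\le K'(1-t)(1+F(x,\xi))$ and $g(t\xi)-g(\xi)\le(1-t)g(0)(1+g(\xi))$ before summing to $2+W$; both routes rest on the same coercivity and yield the same bound $\Delta_W^a(t)\le {\rm const}\cdot(1-t)$.
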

\begin{proof}
(i) and (ii) are obvious.

(iii) As $F$ satisfies \eqref{Intro-Application-p-polynomial-Growth} it is clear that for every $(x,\xi)\in\RR^d\times\MM^{d\times d}$,
$$
c|\xi|^p+g(\xi)\leq W(x,\xi)\leq C(1+|\xi|^p)+g(\xi),
$$ 
and so
$$
\alpha G(\xi)\leq W(x,\xi)\leq \beta(1+G(\xi))
$$ 
for all $(x,\xi)\in\RR^d\times\MM^{d\times d}$, where $\alpha:=\min\{c,1\}$, $\beta:=\max\{C,1\}$ and $G$ is given by \eqref{Intro-Application-Eq1}.

(iv) Fix any $t\in[0,1]$, any $x\in\RR^d$ and any $\xi\in\GG$. First of all, as $F$ is quasiconvex and satisfies \eqref{Intro-Application-p-polynomial-Growth}, there exists $K>0$ such that
\begin{equation}\label{Intro-Bis-Lipschitz}
|F(x,\zeta)-F(x,\zeta^\prime)|\leq K|\zeta-\zeta^\prime|(1+|\zeta|^{p-1}+|\zeta^\prime|^{p-1})
\end{equation}
for all $x\in\RR^d$ and all $\zeta,\zeta^\prime\in\MM^{d\times d}$. Using \eqref{Intro-Bis-Lipschitz} with $\zeta=t\xi$ and $\zeta^\prime=\xi$ and taking the left inequality in \eqref{Intro-Application-p-polynomial-Growth} into account, we obtain
\begin{equation}\label{Intro-Eq-Prop-Eq1}
F(x,t\xi)-F(x,\xi)\leq K^\prime(1-t)(1+F(x,\xi))
\end{equation}
with $K^\prime:=3K\max\{1,{1\over c}\}$. On the other hand, as $g$ is convex we have
$$
g(t\xi)-g(\xi)\leq tg(\xi)+(1-t)g(0)-g(\xi)\leq (1-t)g(0),
$$
and consequently
\begin{equation}\label{Intro-Eq-Prop-Eq2}
g(t\xi)-g(\xi)\leq (1-t)g(0)(1+g(\xi))
\end{equation}
since $1+g(\xi)\geq 1$. From \eqref{Intro-Eq-Prop-Eq1} and \eqref{Intro-Eq-Prop-Eq2} we deduce that 
$$
W(x,t\xi)-W(x,\xi)\leq \max\{K^\prime,g(0)\}(1-t)(2+W(x,\xi)).
$$
Passing to the supremum on $x$ and $\xi$ we obtain
$$
\sup_{x\in\RR^d}\sup_{\xi\in\GG}{W(x,t\xi)-W(x,\xi)\over 2+W(x,\xi)}\leq\max\{K^\prime,g(0)\}(1-t),
$$
and, noticing that $0\in\GG$, i.e., $g(0)<\infty$, the result follows by letting $t\to1$.
 
(v) As $h(t)<\infty$ if and only if  $t>0$ and $\GG\subset\DD[g;h]$ it is clear that if $\xi\in\GG$ then $\det(I+\xi)>0$, which gives result.

(vi) As $\GG\subset\DD[g;h]$ we have $W(x,\xi)\geq h(\det(I+\xi))$ for all $(x,\xi)\in\RR^d\times\MM^{d\times d}$, which gives the result since $h(t)\to\infty$ as $t\to 0$.
\end{proof}

\medskip

Thus, to apply Corollaries \ref{Main-Corollary-1} and \ref{Main-Corollary-2}, the only, but not trivial, point to study is to find ``interesting" convex sets $\GG\subset\DD[g;h]$, related to suitable $g$ and $h$, such that $0\in{\rm int}(\GG)$. In the case $d=2$, such a  (unbounded) convex  set can be constructed (see Proposition \ref{Intro-Application-Example-d=2}). However, a more detailed study of this problem remains to be done.

\medskip

Let us illustrate our purpose in the case $d=2$. Let $h:\RR\to[0,\infty]$ be defined by
$$
h(t):=\left\{
\begin{array}{ll}
\displaystyle{1\over 2t}&\hbox{if }t>0\\
\infty&\hbox{otherwise}
\end{array}
\right.
$$ 
and let $g:\MM^{2\times 2}\to[0,\infty]$ be given by
\begin{equation}\label{Appli-d=2-Eq2}
g(\xi):=\left\{
\begin{array}{ll}
\displaystyle{1\over ({\rm tr}(I+\xi)-|I+\xi|)^2}&\hbox{if }\xi\in\GG\\
\infty&\hbox{otherwise}
\end{array}
\right.
\end{equation}
where ${\rm tr}(\zeta)$ denotes the trace of the matrix $\zeta$ and 
\begin{equation}\label{Appli-d=2-Eq3}
\GG:=\Big\{\xi\in\MM^{2\times 2}:|I+\xi|<{\rm tr}(I+\xi)\Big\}.
\end{equation}
It is easy to see that $\GG$ is a convex open set such $0\in\GG$ and $g$ is a convex function. On the other hand, for each $\xi\in\MM^{2\times 2}$, 
\begin{eqnarray*}
2\det(I+\xi)&=&2(1+\xi_{11})(1+\xi_{22})-2\xi_{12}\xi_{21}\\
&\geq&\big((1+\xi_{11})+(1+\xi_{22})\big)^2-(1+\xi_{11})^2-(1+\xi_{22})^2-\xi_{12}^2-\xi_{21}^2\\
&=&\big({\rm tr}(I+\xi)\big)^2-|I+\xi|^2=({\rm tr}(I+\xi)-|I+\xi|)({\rm tr}(I+\xi)+|I+\xi|)\\
&>&({\rm tr}(I+\xi)-|I+\xi|)({\rm tr}(I+\xi)-|I+\xi|)=({\rm tr}(I+\xi)-|I+\xi|)^2,
\end{eqnarray*}
and so $\GG\subset \DD[g;h]$. Thus, we have
\begin{proposition}\label{Intro-Application-Example-d=2}
Let $W:\RR^2\times\MM^{2\times 2}\to[0,\infty]$ be defined by \eqref{Appli-d=2-Eq1} with $g$ and $\GG$ given by \eqref{Appli-d=2-Eq2} and \eqref{Appli-d=2-Eq3} respectively. Then, $W$ satisfies {\rm (i)}, {\rm (ii)}, {\rm (iii)}, {\rm (iv)}, {\rm (v)} and {\rm (vi)} of Proposition {\rm\ref{Main-Prop-Application}}. In particular, Corollaries {\rm\ref{Main-Corollary-1}} and {\rm\ref{Main-Corollary-2}} can be applied.
\end{proposition}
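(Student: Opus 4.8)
The plan is to observe that the triple $(g,h,\GG)$ from \eqref{Appli-d=2-Eq2}--\eqref{Appli-d=2-Eq3} satisfies all the structural requirements imposed in \S2.2, so that $W$ defined by \eqref{Appli-d=2-Eq1} is an instance of the general construction of that subsection; Proposition \ref{Main-Prop-Application} then delivers (i)--(vi) for free, and since $p>d=2$, Corollary \ref{Main-Corollary-2} applies at once, while Corollary \ref{Main-Corollary-1} applies as soon as $\Omega$ is strongly star-shaped. Hence the proof amounts to four checks: (a) $h$ has the prescribed structure; (b) $\GG$ is convex with $0\in{\rm int}(\GG)$; (c) $g$ is convex; (d) $\GG\subset\DD[g;h]$.

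Check (a) is immediate from $h(t)={1\over 2t}$ for $t>0$ and $h(t)=\infty$ otherwise: $h$ is Borel, $h(t)=\infty$ exactly when $t\leq0$, and $h(t)\to\infty$ as $t\to0$. For (b), I would introduce $\varphi:\MM^{2\times2}\to\RR$, $\varphi(\xi):={\rm tr}(I+\xi)-|I+\xi|$; it is concave (an affine function minus the convex function $\xi\mapsto|I+\xi|$) and continuous, so $\GG=\{\varphi>0\}$ is convex and open, and $\varphi(0)=2-\sqrt2>0$ gives $0\in\GG={\rm int}(\GG)$. For (c), on $\GG$ one has $g=\psi\circ\varphi$ with $\psi(s):=s^{-2}$ on $]0,\infty[$, which is convex and nonincreasing; since $\varphi$ is concave and positive on $\GG$, the composition $\psi\circ\varphi$ is convex on the convex set $\GG$, and extending it by $+\infty$ outside $\GG$ preserves convexity on $\MM^{2\times2}$, with effective domain $\GG$.

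Check (d) is the only genuine computation, but it is short: for $\xi\in\GG$, using $2(1+\xi_{11})(1+\xi_{22})=({\rm tr}(I+\xi))^2-(1+\xi_{11})^2-(1+\xi_{22})^2$ and $-2\xi_{12}\xi_{21}\geq-\xi_{12}^2-\xi_{21}^2$ one gets $2\det(I+\xi)\geq({\rm tr}(I+\xi))^2-|I+\xi|^2=\varphi(\xi)({\rm tr}(I+\xi)+|I+\xi|)$, and since $0<\varphi(\xi)<{\rm tr}(I+\xi)+|I+\xi|$ on $\GG$ this shows $2\det(I+\xi)>\varphi(\xi)^2>0$; therefore $h(\det(I+\xi))={1\over 2\det(I+\xi)}<\varphi(\xi)^{-2}=g(\xi)<\infty$, i.e. $\xi\in\DD[g;h]$. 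Finally, $G$ from \eqref{Intro-Application-Eq1} is the sum on $\GG$ of the convex functions $\xi\mapsto|\xi|^p$ (recall $p>d=2$, so $|\cdot|^p$ is convex) and $g$, extended by $+\infty$ off $\GG$, hence convex with effective domain $\GG$ and $0\in{\rm int}(\GG)$; together with $F$ quasiconvex, $1$-periodic and of $p$-polynomial growth, all hypotheses of \S2.2 are met and Proposition \ref{Main-Prop-Application} yields the claim, whence Corollaries \ref{Main-Corollary-1} and \ref{Main-Corollary-2} apply. I do not anticipate a real obstacle: the only two slightly non-routine points are the convexity of $g$ (handled by the composition rule ``convex nonincreasing after concave'') and the inequality in (d), both of which are short.
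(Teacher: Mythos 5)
Your proof is correct and follows the paper's own approach: the paper likewise reduces everything to the convexity/openness of $\GG$, the convexity of $g$, and the inclusion $\GG\subset\DD[g;h]$ via the same computation bounding $2\det(I+\xi)$ from below by $({\rm tr}(I+\xi)-|I+\xi|)^2$, before invoking Proposition \ref{Main-Prop-Application}. You merely supply a bit more detail than the paper (which dismisses the convexity of $\GG$ and $g$ with ``it is easy to see''), in particular by making explicit that $g=\psi\circ\varphi$ with $\psi$ convex nonincreasing and $\varphi$ concave, which is a clean way to justify that step.
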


\section{Auxiliary results}

\subsection{Ru-usc functions}

Let $U\subset\RR^d$ be an open set and let $L:U\times\MM^{m\times d}\to[0,\infty]$ be a Borel measurable function. For each $x\in U$, we denote the effective domain of $L(x,\cdot)$
 by $\LL_x$ and, for each $a\in L^1_{\rm loc}(U;]0,\infty])$, we define $\Delta_L^a:[0,1]\to]-\infty,\infty]$ by
 $$
 \Delta_L^a(t):=\sup_{x\in U}\sup_{\xi\in \LL_x}{L(x,t\xi)-L(x,\xi)\over a(x)+L(x,\xi)}.
 $$
 \begin{definition}\label{ru-usc-Def}
 We say that $L$ is radially uniformly upper semicontinuous (ru-usc) if there exists $a\in L^1_{\rm loc}(U;]0,\infty])$ such that
 $$
 \limsup_{t\to 1}\Delta^a_L(t)\leq 0.
 $$
 \end{definition}
 \begin{remark}\label{ReMaRk-Delta-ru-usc-remARK1}
 If $L$ is ru-usc then 
 \begin{equation}\label{Delta-ru-usc-remARK1}
 \limsup_{t\to1}L(x,t\xi)\leq L(x,\xi)
 \end{equation}
 for all $x\in U$ and all $\xi\in\LL_x$. Indeed, given $x\in U$ and $\xi\in\LL_x$, we have
 $$
 L(x,t\xi)\leq\Delta^a_L(t)\left(a(x)+L(x,\xi)\right)+L(x,\xi)\hbox{ for all }t\in[0,1],
 $$
 which gives \eqref{Delta-ru-usc-remARK1} since $a(x)+L(x,\xi)>0$ and $\limsup_{t\to 1}\Delta^a_L(t)\leq 0$.
 \end{remark}
 \begin{remark}
 If there exist $x\in U$ and $\xi\in\LL_x$ such that $L(x,\cdot)$ is lsc at $\xi$ then
 \begin{equation}\label{Delta-ru-usc-sci}
 \liminf_{t\to 1}\Delta^a_L(t)\geq 0
 \end{equation}
 for all $a\in L^1_{\rm loc}(U;]0,\infty])$. Indeed, given such $x\in U$ and $\xi\in\LL_x$, for any $a\in L^1_{\rm loc}(U;]0,\infty])$ we have
 $$
 \Delta_L^a(t)\geq {L(x,t\xi)-L(x,\xi)\over a(x)+L(x,\xi)}\hbox{ for all }t\in[0,1],
 $$
 which gives \eqref{Delta-ru-usc-sci} since $a(x)+L(x,\xi)>0$ and $\liminf_{t\to 1}(L(x,t\xi)-L(x,\xi))\geq 0$.
 \end{remark}

The following lemma is essentially due to Wagner (see \cite{wagner09}).

\begin{lemma}\label{Pre-Extension-Result-for-ru-usc-Functions}
Assume that $L$ is ru-usc and consider $x\in U$ such that 
\begin{equation}\label{Homothecie-Assumption}
t\overline{\LL}_x\subset \LL_x\hbox{ for all }t\in]0,1[,
\end{equation}
where $\overline{\LL}_x$ denotes the closure of $\LL_x$. Then
$$
\liminf_{t\to 1}L(x,t\xi)=\limsup_{t\to 1}L(x,t\xi)
$$
for all $\xi\in\overline{\LL}_x$.
\end{lemma}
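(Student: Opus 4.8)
The plan is to fix $\xi\in\overline{\LL}_x$ and show that the $\limsup$ is bounded above by the $\liminf$ as $t\to1$, since the reverse inequality is trivial. The natural device is to compare $L(x,t\xi)$ with $L(x,st\xi)$ for $s$ close to $1$, using the ru-usc estimate, and then let $s\to1$. More precisely, for $t\in]0,1[$ we have $t\xi\in t\overline{\LL}_x\subset\LL_x$ by the homothecy assumption \eqref{Homothecie-Assumption}, so $t\xi\in\LL_x$ and the definition of $\Delta^a_L$ applies with the ``base point'' $t\xi$: for every $s\in[0,1]$,
$$
L(x,st\xi)\le \Delta^a_L(s)\bigl(a(x)+L(x,t\xi)\bigr)+L(x,t\xi).
$$
The key point to extract is that $L(x,t\xi)$ is finite for $t\in]0,1[$: indeed $t\xi\in\LL_x$ means exactly $L(x,t\xi)<\infty$. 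Thus the right-hand side above is a genuine (finite) bound whenever $a(x)<\infty$, which holds for a.e.\ $x$; but here $x$ is a fixed point satisfying \eqref{Homothecie-Assumption}, so one should be slightly careful — I would note that the statement is only interesting (and the conclusion only needs to be checked) when $L(x,t\xi)<\infty$ for $t$ near $1$, i.e.\ when $\xi\in\overline{\LL}_x$ and the $\liminf$ is finite; if the $\liminf$ is $+\infty$ there is nothing to prove.

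Assuming then that $\liminf_{t\to1}L(x,t\xi)=:\ell<\infty$, I would proceed as follows. Pick a sequence $t_n\uparrow1$ with $L(x,t_n\xi)\to\ell$. For an arbitrary $t\in]0,1[$, write $t\xi=\frac{t}{t_n}(t_n\xi)$ and, for $n$ large enough that $t/t_n<1$, apply the displayed inequality with base point $t_n\xi\in\LL_x$ and dilation factor $s=t/t_n$:
$$
L(x,t\xi)\le \Delta^a_L\!\Bigl(\tfrac{t}{t_n}\Bigr)\bigl(a(x)+L(x,t_n\xi)\bigr)+L(x,t_n\xi).
$$
Now take $\limsup$ as $t\to1$ on the left; on the right, as $t\to1$ we have $t/t_n\to 1/t_n$, which is not quite $1$, so instead I would first let $n\to\infty$ after taking $\limsup_{t\to1}$: as $t\to1$ with $n$ fixed, $t/t_n\to1/t_n>1$, which is outside $[0,1]$ — this forces a reordering. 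The clean route is: first bound $\limsup_{t\to1}L(x,t\xi)$ by choosing $t=t_n s$ with $s\uparrow1$; that is, restrict attention to the cofinal family $t=t_n$ combined with the dilation. Concretely, for fixed $n$ and $s\in]0,1[$,
$$
L(x,s t_n\xi)\le \Delta^a_L(s)\bigl(a(x)+L(x,t_n\xi)\bigr)+L(x,t_n\xi),
$$
and letting $s\to1$ and using $\limsup_{s\to1}\Delta^a_L(s)\le0$ together with $a(x)+L(x,t_n\xi)\ge0$ gives $\limsup_{s\to1}L(x,st_n\xi)\le L(x,t_n\xi)$. Since $st_n$ ranges over a set of parameters accumulating at $t_n<1$, one then takes $n\to\infty$; a short diagonal/interleaving argument produces a sequence $\tau_k\uparrow1$ with $L(x,\tau_k\xi)\to\ell'$ for any prescribed $\ell'\ge\limsup_{t\to1}L(x,t\xi)$ dominated in this way, forcing $\limsup_{t\to1}L(x,t\xi)\le\ell=\liminf_{t\to1}L(x,t\xi)$.

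The main obstacle is precisely this bookkeeping with the dilation parameters: the homothecy assumption \eqref{Homothecie-Assumption} only lets us use base points $t_n\xi$ with $t_n<1$, and dilating such a base point by $s<1$ keeps us inside $\LL_x$, but we must arrange that the resulting parameters $st_n$ still form a sequence converging to $1$ from below along which $L(x,\cdot\,\xi)$ is controlled by $\liminf$. I expect the cleanest implementation is: given $\eps>0$, choose $n$ with $L(x,t_n\xi)<\ell+\eps$, then choose $\delta>0$ with $\Delta^a_L(s)(a(x)+\ell+\eps)<\eps$ for $s\in(1-\delta,1)$ (using $a(x)<\infty$ — which I would justify by observing that the set where $a=\infty$ is Lebesgue-null, and if $a(x)=\infty$ at our particular $x$ one replaces $a$ by a suitable finite modification, or simply notes the inequality $L(x,st_n\xi)\le L(x,t_n\xi)$ degenerates harmlessly); this yields $L(x,st_n\xi)<\ell+2\eps$ for all $s\in(1-\delta,1)$, hence points of the form $st_n\xi$ with parameter $st_n$ arbitrarily close to $t_n$ from below have $L$-value below $\ell+2\eps$. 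Iterating with $t_n\uparrow1$ produces parameters $\uparrow1$ with $\limsup L$-value $\le\ell$, which is the desired inequality. I would also remark that $\xi\in\overline{\LL}_x$ (not merely $\xi\in\LL_x$) is exactly what is needed for $t\xi\in\LL_x$ for all $t\in]0,1[$ via \eqref{Homothecie-Assumption}, so the hypothesis on $\xi$ is used in an essential way.
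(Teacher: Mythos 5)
Your initial instinct is exactly the paper's argument, but the place where you bail out of it is precisely where the proof is decided, and the replacement you substitute does not close the gap.

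Your first displayed inequality — take an arbitrary $t$ near $1$, write $t\xi=\frac{t}{t_n}(t_n\xi)$ with $t<t_n$, and apply the ru-usc bound at the base point $t_n\xi\in\LL_x$ — is the right move. The difficulty you then flag ("as $t\to1$ with $n$ fixed, $t/t_n\to 1/t_n>1$") is real, but the remedy is not to restrict to a cofinal family; it is to let $n$ depend on $t$. This is what the paper does: it takes a sequence $\{t_n\}$ along which the \emph{limsup} is attained and a sequence $\{s_n\}$ along which the \emph{liminf} is attained, interleaved so that $t_n/s_n\in\,]0,1[$ and $t_n/s_n\to1$. Then
$$
L(x,t_n\xi)\leq a(x)\,\Delta^a_L\!\bigl(\tfrac{t_n}{s_n}\bigr)+\bigl(1+\Delta^a_L\!\bigl(\tfrac{t_n}{s_n}\bigr)\bigr)L(x,s_n\xi),
$$
and letting $n\to\infty$ kills the first term and sends the second to the liminf, bounding the limsup directly. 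The essential point is that you bound $L$ \emph{along a sequence realizing the limsup}, using base points from a liminf-realizing sequence sitting just above.

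What you do instead — ``restrict attention to the cofinal family $t=st_n$'' where $\{t_n\}$ realizes the \emph{liminf} — is a genuinely different and insufficient argument. The estimate $\limsup_{s\to1}L(x,st_n\xi)\leq L(x,t_n\xi)$ only controls $L(x,\tau\xi)$ for $\tau$ in small left-neighborhoods $(t_n(1-\delta_n),t_n)$ of the points $t_n$; those intervals do not exhaust any left-neighborhood of $1$, and the limsup could perfectly well be attained in the gaps between them. Your closing ``diagonal/interleaving'' sentence produces \emph{some} sequence $\tau_k\uparrow1$ along which $L(x,\tau_k\xi)$ stays near $\ell$, which only reproves the trivial fact that the liminf is $\leq\ell$. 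It does not bound the limsup, and the sentence ``for any prescribed $\ell'\geq\limsup_{t\to1}L(x,t\xi)$ dominated in this way'' does not parse into a valid deduction. To repair this, return to your first attempt: for a sequence $\{\tau_k\}\uparrow1$ realizing $\limsup_{t\to1}L(x,t\xi)$, choose $n(k)$ with $t_{n(k)}>\tau_k$ (possible since $t_n\to1$), note $\tau_k/t_{n(k)}\in(\tau_k,1)\to1$, and apply the ru-usc inequality with base $t_{n(k)}\xi$. This is exactly the paper's construction.

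Two smaller remarks. First, your worry about $a(x)=\infty$ is legitimate in principle — the paper's use of ``$a(x)>0$'' to conclude $\limsup_n a(x)\Delta^a_L(t_n/s_n)\leq0$ tacitly presupposes $a(x)<\infty$ — but both you and the paper leave this at the level of a remark rather than a proof, so it is not a point of difference. Second, the paper also does not spell out that the interleaved sequences must satisfy $t_n\leq s_n$ (else $t_n/s_n\notin[0,1]$ and $\Delta^a_L$ is undefined there); your version of the argument handles this correctly by imposing $t<t_n$ at the outset.
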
 
\begin{proof}
Fix $\xi\in\overline{\LL}_x$. It suffices to prove that
\begin{equation}\label{ru-usc-AssUmPtIoN1}
\limsup_{t\to 1}L(x,t\xi)\leq\liminf_{t\to 1}L(x,t\xi).
\end{equation}
Without loss of generality we can assume that $\liminf_{t\to 1}L(x,t\xi)<\infty$ and there exist $\{t_n\}_n, \{s_n\}_n\subset]0,1[$ such that:
\begin{itemize}
\item[\SMALL$\blacklozenge$] $t_n\to 1$, $s_n\to 1$ and ${t_n\over s_n}\to 1$;
\item[\SMALL$\blacklozenge$] $\displaystyle\limsup_{t\to 1}L(x,t\xi)=\lim_{n\to\infty}L(x,t_n\xi)$;
\item[\SMALL$\blacklozenge$] $\displaystyle\liminf_{t\to 1}L(x,t\xi)=\lim_{n\to\infty}L(x,s_n\xi)$.
\end{itemize}
From \eqref{Homothecie-Assumption} we see that for every $n\geq 1$, $s_n\xi\in\LL_x$, and so we can assert that for every $n\geq 1$,
\begin{equation}\label{ru-usc-AssUmPtIoN2}
L(x,t_n\xi)\leq a(x)\Delta^a_L\left({t_n\over s_n}\right)+\left(1+\Delta^a_L\left({t_n\over s_n}\right)\right)L(x,s_n\xi).
\end{equation}
On the other hand, as $L$ is ru-usc we have $\limsup_{n\to\infty}\left(1+\Delta^a_L\left({t_n\over s_n}\right)\right)\leq 1$ and  $\limsup_{n\to\infty}a(x)\Delta^a_L\left({t_n\over s_n}\right)\leq 0$ since $a(x)>0$, and \eqref{ru-usc-AssUmPtIoN1} follows from \eqref{ru-usc-AssUmPtIoN2} by letting $n\to\infty$.
\end{proof}

\medskip

Define $\widehat{L}:U\times\MM^{m\times d}\to[0,\infty]$ by 
$$
\widehat{L}(x,\xi):=
\liminf_{t\to 1}L(x,t\xi).
$$

The interest of Definition \ref{ru-usc-Def} comes from the following theorem. 

\begin{theorem}\label{Extension-Result-for-ru-usc-Functions}
If $L$ is ru-usc and if for every $x\in U$, 
\begin{equation}\label{Homothecie-Assumption-Bis}
t\overline{\LL}_x\subset{\rm int}(\LL_x)\hbox{ for all }t\in]0,1[
\end{equation}
(in particular \eqref{Homothecie-Assumption} holds) and $L(x,\cdot)$ is lsc on ${\rm int}(\LL_x)$, where ${\rm int}(\LL_x)$ denotes the interior of $\LL_x$, then{\rm:}
\begin{itemize}
\item[(i)] $
\widehat{L}(x,\xi)=\left\{
\begin{array}{ll}
L(x,\xi)&\hbox{if }\xi\in{\rm int}(\LL_x)\\
\lim\limits_{t\to 1}L(x,t\xi)&\hbox{if }\xi\in\partial\LL_x\\
\infty&\hbox{otherwise{\rm;}}
\end{array}
\right.
$
\item[(ii)] $\widehat{L}$ is ru-usc{\rm;}
\item[(iii)] for every $x\in U$, $\widehat{L}(x,\cdot)$ is the lsc envelope of $L(x,\cdot)$.
\end{itemize}
\end{theorem}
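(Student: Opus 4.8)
The plan is to establish (i) first and then to deduce (ii) and (iii) from it. Two facts will be used repeatedly: the pointwise inequality $L(x,\tau\eta)\leq\Delta_L^a(\tau)(a(x)+L(x,\eta))+L(x,\eta)$ for $\eta\in\LL_x$ and $\tau\in[0,1]$ (as in Remark \ref{ReMaRk-Delta-ru-usc-remARK1}), and the fact that $\lim_{t\to1}\Delta_L^a(t)=0$; the latter holds because $\limsup_{t\to1}\Delta_L^a(t)\leq0$ by ru-usc while $\liminf_{t\to1}\Delta_L^a(t)\geq0$ by \eqref{Delta-ru-usc-sci}, $L(x,\cdot)$ being lsc at some point of $\LL_x$ (e.g. any point of ${\rm int}(\LL_x)$, which is nonempty by \eqref{Homothecie-Assumption-Bis} unless $\LL_x=\emptyset$, the trivial case).

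\medskip

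For (i), fix $x$ and split $\MM^{m\times d}={\rm int}(\LL_x)\cup\partial\LL_x\cup(\MM^{m\times d}\setminus\overline{\LL}_x)$. If $\xi\in{\rm int}(\LL_x)$, then $t\xi\in{\rm int}(\LL_x)$ for $t\in]0,1[$ by \eqref{Homothecie-Assumption-Bis}; lsc of $L(x,\cdot)$ at $\xi$ gives $\liminf_{t\to1}L(x,t\xi)\geq L(x,\xi)$ and \eqref{Delta-ru-usc-remARK1} gives $\limsup_{t\to1}L(x,t\xi)\leq L(x,\xi)$, so $\widehat L(x,\xi)=L(x,\xi)$. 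If $\xi\in\partial\LL_x\subset\overline{\LL}_x$, then \eqref{Homothecie-Assumption} holds (it is implied by \eqref{Homothecie-Assumption-Bis}) and Lemma \ref{Pre-Extension-Result-for-ru-usc-Functions} gives $\liminf_{t\to1}L(x,t\xi)=\limsup_{t\to1}L(x,t\xi)$, hence $\widehat L(x,\xi)=\lim_{t\to1}L(x,t\xi)$. If $\xi\notin\overline{\LL}_x$, then since $\MM^{m\times d}\setminus\overline{\LL}_x$ is open and $t\xi\to\xi$, we have $L(x,t\xi)=\infty$ for $t$ near $1$, so $\widehat L(x,\xi)=\infty$. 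In particular the effective domain $\widehat\LL_x$ of $\widehat L(x,\cdot)$ satisfies ${\rm int}(\LL_x)\subset\widehat\LL_x\subset\overline{\LL}_x$.

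\medskip

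For (ii), I claim $\widehat L$ is ru-usc with the same $a$. Fix $t\in]0,1[$, $x\in U$ and $\xi\in\widehat\LL_x$. If $\xi\in{\rm int}(\LL_x)$, then $t\xi\in{\rm int}(\LL_x)$ and, by (i), $\widehat L$ coincides with $L$ at $\xi$ and at $t\xi$, so the corresponding difference quotient is $\leq\Delta_L^a(t)$. If $\xi\in\partial\LL_x$, then $\widehat L(x,\xi)<\infty$, $t\xi\in{\rm int}(\LL_x)$, $\widehat L(x,t\xi)=L(x,t\xi)$; applying the pointwise inequality with $\tau=t/s$ and $\eta=s\xi$ for $s\in]t,1[$ (so $s\xi\in\LL_x$) and letting $s\to1$, along which $L(x,s\xi)\to\widehat L(x,\xi)$ by (i), the difference quotient is seen to be $\leq\limsup_{s\to1^-}\Delta_L^a(t/s)$. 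Since $t/s$ stays in $]t,1[$, taking suprema over $x,\xi$ and then $\limsup_{t\to1}$ bounds $\Delta_{\widehat L}^a$ by quantities with nonpositive $\limsup$ at $1$, i.e. $\limsup_{t\to1}\Delta_{\widehat L}^a(t)\leq0$.

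\medskip

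For (iii), fix $x$ and let $\overline{L(x,\cdot)}$ be the lsc envelope of $L(x,\cdot)$. Since $t\xi\to\xi$ as $t\to1$ one has $\overline{L(x,\cdot)}\leq\widehat L(x,\cdot)$, and $\widehat L(x,\cdot)\leq L(x,\cdot)$ by \eqref{Delta-ru-usc-remARK1}; it therefore suffices to show that $\widehat L(x,\cdot)$ is lsc, for then it is an lsc minorant of $L(x,\cdot)$ and hence $\geq\overline{L(x,\cdot)}$, giving equality. On ${\rm int}(\LL_x)$ this is clear (there $\widehat L=L$, lsc by hypothesis), and on the open set $\MM^{m\times d}\setminus\overline{\LL}_x$ it is trivial (there $\widehat L\equiv\infty$). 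The remaining case — lsc at $\xi\in\partial\LL_x$ — is the main obstacle. Given $\xi_n\to\xi$ with $\widehat L(x,\xi_n)\to\ell<\infty$, eventually $\xi_n\in\widehat\LL_x\subset\overline{\LL}_x$; then, as in (ii), applying the pointwise inequality with $\tau=t/s$, $\eta=s\xi_n$ ($s\in]t,1[$) and letting $s\to1$ (using $L(x,s\xi_n)\to\widehat L(x,\xi_n)$ by (i)) yields, for $t$ near $1$, a bound $\widehat L(x,\xi_n)\geq(1+\omega(t))^{-1}(L(x,t\xi_n)-\omega(t)a(x))$ with $\omega(t):=\limsup_{s\to1^-}\Delta_L^a(t/s)$. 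Letting $n\to\infty$ and using lsc of $L(x,\cdot)$ at $t\xi\in{\rm int}(\LL_x)$ (legitimate by \eqref{Homothecie-Assumption-Bis}) gives $\ell\geq(1+\omega(t))^{-1}(L(x,t\xi)-\omega(t)a(x))$, and letting $t\to1$, using $\omega(t)\to0$ and $L(x,t\xi)\to\widehat L(x,\xi)$ from (i), gives $\ell\geq\widehat L(x,\xi)$. The delicate point is the order of the three limits ($s$, then $n$, then $t$) and the control of the error factor $1+\omega(t)$ and additive term $\omega(t)a(x)$, for which one really needs $\Delta_L^a(t)\to0$, not merely $\limsup_{t\to1}\Delta_L^a(t)\leq0$.
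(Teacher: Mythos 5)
Your proof is correct, and (i) agrees with the paper, but your route through (ii) and particularly (iii) is genuinely different from the paper's, and noticeably more involved.

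For (ii), the paper avoids your case split ($\xi\in{\rm int}(\LL_x)$ vs.\ $\xi\in\partial\LL_x$): it applies Lemma \ref{Pre-Extension-Result-for-ru-usc-Functions} simultaneously to $\xi\in\widehat\LL_x\subset\overline\LL_x$ and to $t\xi\in\LL_x$, writes $\widehat L(x,\xi)=\lim_{s\to1}L(x,s\xi)$ and $\widehat L(x,t\xi)=\lim_{s\to1}L(x,st\xi)$, and passes directly to the limit in the difference quotient to get the clean bound $\Delta^a_{\widehat L}(t)\leq\Delta^a_L(t)$. Your argument lands on $\Delta^a_{\widehat L}(t)\leq\max\{\Delta^a_L(t),\,\omega(t)\}$ with $\omega(t)=\limsup_{s\to1^-}\Delta^a_L(t/s)$, which also has nonpositive $\limsup$ at $1$; both work, but the paper's is a one-line computation.

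For (iii) the difference is more substantial. You set out to prove directly that $\widehat L(x,\cdot)$ is lsc, and because a sequence $\xi_n$ with $\widehat L(x,\xi_n)\to\ell<\infty$ only gives $\xi_n\in\overline\LL_x$ (not $\xi_n\in\LL_x$), you are forced into the $s$-scaling detour through $s\xi_n\in\LL_x$, yielding three nested limits ($s$, then $n$, then $t$) and the error factor $1+\omega(t)$ which you must show tends to $1$; as you correctly observe, this genuinely requires $\lim_{t\to1}\Delta^a_L(t)=0$ and not merely $\limsup\leq0$. The paper sidesteps all of this by instead proving the equivalent statement $\liminf_n L(x,\xi_n)\geq\widehat L(x,\xi)$ for $\xi_n\to\xi$: after reducing to $\sup_n L(x,\xi_n)<\infty$, one immediately has $\xi_n\in\LL_x$, the pointwise inequality $L(x,t\xi_n)\leq(1+\Delta^a_L(t))L(x,\xi_n)+a(x)\Delta^a_L(t)$ applies directly, and a two-stage limit ($n\to\infty$, then $t\to1$, using only $\limsup_{t\to1}\Delta^a_L(t)\leq0$) combined with the lower semicontinuity of $L(x,\cdot)$ at $t\xi\in{\rm int}(\LL_x)$ closes the argument. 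Since $\overline{L(x,\cdot)}\leq\widehat L(x,\cdot)$ follows trivially by testing with the radial sequence $t_n\xi$, this identifies $\widehat L(x,\cdot)$ with the lsc envelope, and lsc is then automatic. So: both proofs are valid, but the paper's choice to argue via $L$ rather than via $\widehat L$ removes one layer of limits and the need for the stronger two-sided convergence of $\Delta^a_L$.
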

\begin{proof}
(i) Lemma \ref{Pre-Extension-Result-for-ru-usc-Functions} shows that, for $x\in U$ and $\xi\in \overline{\LL}_x$, $\widehat{L}(x,\xi)=\lim_{t\to1}L(x,t\xi)$. From remark \ref{ReMaRk-Delta-ru-usc-remARK1} we see that if $\xi\in{\rm int}(\LL_x)$ then $\limsup_{t\to1}L(x,t\xi)\leq L(x,\xi)$. On the other hand, from \eqref{Homothecie-Assumption-Bis} it follows that if $\xi\in {\rm int}(\LL_x)$ then $t\xi\in{\rm int}(\LL_x)$ for all $t\in]0,1[$. Thus, $\liminf_{t\to 1}L(x,t\xi)\geq L(x,\xi)$ whenever $\xi\in {\rm int}(\LL_x)$ since $L(x,\cdot)$ is lsc on ${\rm int}(\LL_x)$, and (i) follows. 

(ii) Fix any $t\in]0,1[$ any $x\in U$ and any $\xi\in\widehat{\LL}_x$, where $\widehat{\LL}_x$ denotes the effective domain of $\widehat{L}(x,\cdot)$. As $\widehat{\LL}_x\subset\overline{\LL}_x$ we have $\xi\in\overline{\LL}_x$ and $t\xi\in\LL_x$ since \eqref{Homothecie-Assumption} holds. From Lemma \ref{Pre-Extension-Result-for-ru-usc-Functions} we can assert that:
\begin{itemize}
\item[\SMALL$\blacklozenge$] $\widehat{L}(x,\xi)=\lim\limits_{s\to 1}L(x,s\xi)$;
\item[\SMALL$\blacklozenge$] $\widehat{L}(x,t\xi)=\lim\limits_{s\to 1}L(x,s(t\xi))$,
\end{itemize}
and consequently
\begin{equation}\label{LimiT-Prop-ru-usc-FUNcTIONs}
{\widehat{L}(x,t\xi)-\widehat{L}(x,\xi)\over a(x)+\widehat{L}(x,\xi)}=\lim_{s\to1}{L(x,t(s\xi))-L(x,s\xi)\over a(x)+L(x,s\xi)}.
\end{equation}
On the other hand, by \eqref{Homothecie-Assumption} we have $s\xi\in\LL_x$ for all $s\in]0,1[$, and so
$$
{L(x,t(s\xi))-L(x,s\xi)\over a(x)+L(x,s\xi)}\leq\Delta^a_L(t)\hbox{ for all }s\in]0,1[.
$$
Letting $s\to 1$ and using \eqref{LimiT-Prop-ru-usc-FUNcTIONs} we deduce that $\Delta^a_{\widehat{L}}(t)\leq\Delta^a_L(t)$ for all $t\in]0,1[$, which gives (ii) since $L$ is ru-usc.

(iii) Given $x\in U$, we only need to prove that if $|\xi_n-\xi|\to0$ then
\begin{equation}\label{(iii)-prop-ru-usc-target0}
\liminf_{n\to\infty}L(x,\xi_n)\geq \widehat{L}(x,\xi).
\end{equation}
Without loss of generality we can assume that
$$
\liminf_{n\to\infty}L(x,\xi_n)=\lim_{n\to\infty}L(x,\xi_n)<\infty,\hbox{ and so }\sup_{n\geq 1}L(x,\xi_n)<\infty.
$$
Thus $\xi_n\in\LL_x$ for all $n\geq 1$, hence $\xi\in\overline{\LL}_x$, and so
$$
\widehat{L}(x,\xi)=\lim_{t\to1}L(x,t\xi)
$$
by Lemma \ref{Pre-Extension-Result-for-ru-usc-Functions}. Moreover, using \eqref{Homothecie-Assumption} we see that, for any $t\in]0,1[$, $t\xi\in\LL_x$ and $t\xi_n\in\LL_x$ for all $n\geq 1$, and consequently
$$
\liminf_{n\to \infty}L(x,t\xi_n)\geq L(x,t\xi)\hbox{ for all }t\in]0,1[
$$
because $L(x,\cdot)$ is lsc on $\LL_x$ and $|t\xi_n-t\xi|\to0$. It follows that
\begin{equation}\label{(iii)-prop-ru-usc-target1}
\limsup_{t\to1}\liminf_{n\to \infty}L(x,t\xi_n)\geq\widehat{L}(x,\xi).
\end{equation}
On the other hand, for every $n\geq 1$ and every $t\in [0,1]$, we have
$$
L(x,t\xi_n)\leq(1+\Delta^a_L(t))L(x,\xi_n)+a(x)\Delta^a_L(t).
$$
As $L$ is ru-usc, letting $n\to\infty$ and $t\to1$ we obtain
$$
\limsup_{t\to1}\liminf_{n\to \infty}L(x,t\xi_n)\leq\lim_{n\to\infty}L(x,\xi_n),
$$
which gives \eqref{(iii)-prop-ru-usc-target0} when combined with \eqref{(iii)-prop-ru-usc-target1}.
\end{proof}

\medskip

In what follows, given any bounded open set $A\subset\RR^d$, we denote the space of continuous piecewise affine functions from $A$ to $\RR^m$ by $\Aff(A;\RR^m)$, i.e.,  $u\in\Aff(A;\RR^m)$ if and only if $u\in C(\overline{A};\RR^m)$  and there exists a finite family $\{A_i\}_{i\in I}$ of open disjoint subsets of $A$ such that  $|A\setminus \cup_{i\in I} A_i|=0$ and, for each $i\in I$, $|\partial A_i|=0$ and $\nabla u(x)=\xi_i$ in $A_i$ with $\xi_i\in\MM^{m\times d}$. Define $\Z L:U\times\MM^{m\times d}\to[0,\infty]$ by 
$$
\Z L(x,\xi):=\inf\left\{\int_Y L(x,\xi+\nabla\phi(y))dy:\phi\in \Aff_0(Y;\RR^m)\right\}
$$
with $Y:=]0,1[^d$ and $\Aff_0(Y;\RR^m):=\big\{\phi\in\Aff(Y;\RR^m):\phi=0\hbox{ on }\partial Y\big\}$. Roughly, Proposition \ref{Coro-Prop-ru-usc1} shows that ru-usc functions have a nice behavior with respect to relaxation.

\begin{proposition}\label{Coro-Prop-ru-usc1}
If $L$ is ru-usc then $\Z L$ is ru-usc.
\end{proposition}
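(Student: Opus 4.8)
The plan is to show that the $\Z$-operation does not destroy the ru-usc property, by transferring the pointwise inequality defining $\Delta^a_L$ through the infimum over $\Aff_0(Y;\RR^m)$. Let $a\in L^1_{\rm loc}(U;]0,\infty])$ be a weight witnessing that $L$ is ru-usc. I will use the \emph{same} weight $a$ for $\Z L$. First I would fix $x\in U$ and observe that the effective domain of $\Z L(x,\cdot)$, call it $\Z\LL_x$, is a subset of $\LL_x$ (indeed, taking $\phi\equiv 0$ in the definition gives $\Z L(x,\xi)\le L(x,\xi)$, while $p$-coercivity or more simply the structure of the problem is not even needed here — one just needs that if $\int_Y L(x,\xi+\nabla\phi)dy<\infty$ then $\xi+\nabla\phi(y)\in\LL_x$ for a.e.\ $y$, and since $\phi$ is piecewise affine with $\phi=0$ on $\partial Y$, averaging the gradient gives $\int_Y(\xi+\nabla\phi(y))dy=\xi$, so by convexity of... — actually here $\LL_x$ need not be convex, so instead argue: $\xi\in\Z\LL_x$ forces the existence of \emph{some} $\phi$ with $\xi+\nabla\phi(y)\in\LL_x$ on a set of positive measure, hence in particular $\Z\LL_x\subset$ the set where $\Z L$ is finite, and the precise inclusion $\Z\LL_x\subset\LL_x$ is not actually required for what follows).

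The heart of the argument is the following. Fix $t\in]0,1[$, fix $x\in U$, and fix $\xi\in\Z\LL_x$. Given $\eps>0$, choose $\phi\in\Aff_0(Y;\RR^m)$ with
$$
\int_Y L(x,\xi+\nabla\phi(y))\,dy\le \Z L(x,\xi)+\eps .
$$
Then $t\phi\in\Aff_0(Y;\RR^m)$ as well, and $\xi+\nabla(t\phi)(y)=t(\xi'+\nabla\phi(y))$ is \emph{not} quite of the needed form; the correct choice is to test $\Z L(x,t\xi)$ with the competitor $t\phi$, giving $t\xi+t\nabla\phi(y)=t(\xi+\nabla\phi(y))$. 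Thus for a.e.\ $y$ with $\xi+\nabla\phi(y)\in\LL_x$ the definition of $\Delta^a_L$ yields
$$
L(x,t(\xi+\nabla\phi(y)))\le \bigl(1+\Delta^a_L(t)\bigr)L(x,\xi+\nabla\phi(y))+a(x)\Delta^a_L(t),
$$
and for a.e.\ $y$ with $\xi+\nabla\phi(y)\notin\LL_x$ the right-hand side $L(x,\xi+\nabla\phi(y))=\infty$ makes the inequality trivial (using $a(x)>0$ and, if $\Delta^a_L(t)$ could be negative, noting the finitely many gradient values of $\phi$ let us restrict to the competitor being admissible in the first place). Integrating over $Y$, then taking the infimum over competitors for $\Z L(x,t\xi)$ on the left,
$$
\Z L(x,t\xi)\le\int_Y L(x,t(\xi+\nabla\phi(y)))\,dy\le\bigl(1+\Delta^a_L(t)\bigr)\bigl(\Z L(x,\xi)+\eps\bigr)+a(x)\Delta^a_L(t).
$$
Letting $\eps\to0$ and rearranging gives
$$
\frac{\Z L(x,t\xi)-\Z L(x,\xi)}{a(x)+\Z L(x,\xi)}\le\Delta^a_L(t),
$$
provided $\Z L(x,\xi)<\infty$, which holds since $\xi\in\Z\LL_x$; taking the supremum over $x\in U$ and $\xi\in\Z\LL_x$ yields $\Delta^a_{\Z L}(t)\le\Delta^a_L(t)$ for all $t\in]0,1[$, and then $\limsup_{t\to1}\Delta^a_{\Z L}(t)\le\limsup_{t\to1}\Delta^a_L(t)\le0$, i.e.\ $\Z L$ is ru-usc with the same weight $a$.

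The step I expect to require the most care is the handling, inside the integral over $Y$, of those $y$ for which $\xi+\nabla\phi(y)$ leaves $\LL_x$: one must be sure the inequality $L(x,t(\xi+\nabla\phi(y)))\le(1+\Delta^a_L(t))L(x,\xi+\nabla\phi(y))+a(x)\Delta^a_L(t)$ still holds (or is vacuous) there. Since a near-optimal competitor $\phi$ for $\Z L(x,\xi)$ with finite energy automatically satisfies $\xi+\nabla\phi(y)\in\LL_x$ for a.e.\ $y$ (otherwise the integral would be $+\infty$, contradicting near-optimality as $\Z L(x,\xi)<\infty$), this difficulty does not actually arise, and the argument closes cleanly. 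A secondary point worth a line is the trivial but necessary remark that $t\phi\in\Aff_0(Y;\RR^m)$ whenever $\phi\in\Aff_0(Y;\RR^m)$, so that $t\phi$ is an admissible competitor for $\Z L(x,t\xi)$.
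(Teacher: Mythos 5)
Your proposal is correct and takes essentially the same route as the paper: select a near-optimal competitor $\phi$ for $\Z L(x,\xi)$ (the paper uses a minimizing sequence $\{\phi_n\}$, you use an $\eps$-minimizer; this is only a cosmetic difference), observe that finiteness of $\int_Y L(x,\xi+\nabla\phi(y))\,dy$ forces $\xi+\nabla\phi(y)\in\LL_x$ a.e., test $\Z L(x,t\xi)$ with $t\phi\in\Aff_0(Y;\RR^m)$, integrate the pointwise ru-usc inequality over $Y$, and pass to the limit to get $\Delta^a_{\Z L}(t)\le\Delta^a_L(t)$ with the same weight $a$. The meandering first paragraph about $\Z\LL_x\subset\LL_x$ is unnecessary, as you eventually notice; and the step that multiplies $\int_Y L\,dy\le \Z L(x,\xi)+\eps$ by $1+\Delta^a_L(t)$ tacitly uses $\Delta^a_L(t)\ge -1$, which is automatic from $L\ge 0$ and $a>0$ but deserves a word.
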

\begin{proof}
Fix any $t\in[0,1]$, any $x\in U$ and any $\xi\in\Z\LL_x$, where $\Z\LL_x$ denotes the effective domain of $\Z L(x,\cdot)$. By definition, there exists $\{\phi_n\}_n\subset \Aff_0(Y;\RR^m)$ such that:
\begin{itemize}
\item[\SMALL$\blacklozenge$] $\displaystyle\Z L(x,\xi)=\lim\limits_{n\to\infty}\int_Y L\left(x,\xi+\nabla\phi_n(y)\right)dy$;
\item[\SMALL$\blacklozenge$] $\xi+\nabla\phi_n(y)\in\LL_x$ for all $n\geq 1$ and a.a. $y\in Y$.
\end{itemize}
Moreover, for every $n\geq 1$, 
$$
\Z L(x,t\xi)\leq\int_Y L\left(x,t(\xi+\nabla\phi_n(y))\right)dy
$$ 
since $t\phi_n\in \Aff_0(Y;\RR^m)$, and so
$$
\Z L(x,t\xi)-\Z L(x,\xi)\leq \liminf_{n\to\infty}\int_Y\big(L(x,t(\xi+\nabla\phi_n(y)))-L(x,\xi+\nabla\phi_n(y))\big)dy.
$$
As $L$ is ru-usc it follows that
$$
\Z L(x,t\xi)-\Z L(x,\xi)\leq\Delta^a_L(t)\left(a(x)+\Z L(x,\xi)\right),
$$
which implies that $\Delta^a_{\Z L}(t)\leq\Delta^a_L(t)$ for all $t\in[0,1]$, and the proof is complete.
\end{proof}

\medskip

Assume that $U=\RR^d$ and define $\mathcal{H}L:\RR^d\times\MM^{m\times d}\to[0,\infty]$ by 
$$
\mathcal{H}L(\xi):=\inf_{k\geq 1}\inf\left\{\mint_{kY}L(x,\xi+\nabla\phi(x))dx:\phi\in W^{1,p}_0(kY;\RR^m)\right\}.
$$
Roughly, Proposition \ref{Coro-Prop-ru-usc2} shows that ru-usc functions have a nice behavior with respect to homogenization.

\begin{proposition}\label{Coro-Prop-ru-usc2}
If $L$ is periodically ru-usc, i.e., there exists $a\in L^1_{\rm loc}(\RR^d;]0,\infty])$ such that $a$ is $1$-periodic and $\limsup\limits_{t\to1}\Delta^a_L(t)\leq 0$, then $\mathcal{H} L$ is ru-usc.
\end{proposition}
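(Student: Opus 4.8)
The plan is to mimic the proof of Proposition~\ref{Coro-Prop-ru-usc1}; the single new ingredient will be that the $1$-periodicity of $a$ makes the average $\mint_{kY}a(x)\,dx$ independent of $k$. First I would set $\bar a:=\int_Y a(y)\,dy$: since $a\in L^1_{\rm loc}(\RR^d;]0,\infty])$ is $1$-periodic and strictly positive a.e., one has $\bar a\in]0,\infty[$ and, because $kY=]0,k[^d$ decomposes into $k^d$ translates of $Y$ by vectors of $\ZZ^d$, $\mint_{kY}a(x)\,dx=\bar a$ for every $k\geq1$. The goal is then to show that the constant function $\bar a$ on $\RR^d$ certifies the ru-usc property of $\mathcal{H}L$, i.e.\ $\Delta^{\bar a}_{\mathcal{H}L}(t)\leq\Delta^a_L(t)$ for all $t\in[0,1]$; since $\mathcal{H}L$ does not depend on the space variable, this reduces to the pointwise bound $\mathcal{H}L(t\xi)-\mathcal{H}L(\xi)\leq\Delta^a_L(t)\big(\bar a+\mathcal{H}L(\xi)\big)$ for every $t\in[0,1]$ and every $\xi$ in the effective domain of $\mathcal{H}L$.

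To get this bound I would fix such $t$ and $\xi$ and choose $k_n\geq1$ and $\phi_n\in W^{1,p}_0(k_nY;\RR^m)$ with $\mint_{k_nY}L(x,\xi+\nabla\phi_n(x))\,dx\to\mathcal{H}L(\xi)<\infty$; discarding finitely many indices one may assume every such average is finite, whence $\xi+\nabla\phi_n(x)\in\LL_x$ for a.a.\ $x\in k_nY$. As $t\phi_n\in W^{1,p}_0(k_nY;\RR^m)$, the definition of $\mathcal{H}L$ gives $\mathcal{H}L(t\xi)\leq\mint_{k_nY}L\big(x,t(\xi+\nabla\phi_n(x))\big)\,dx$. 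Then I would apply the pointwise ru-usc inequality $L(x,t\zeta)-L(x,\zeta)\leq\Delta^a_L(t)\big(a(x)+L(x,\zeta)\big)$ with $\zeta=\xi+\nabla\phi_n(x)$ (legitimate for a.a.\ $x$, where moreover $a(x)<\infty$), integrate over $k_nY$, divide by $|k_nY|$ and use $\mint_{k_nY}a\,dx=\bar a$ to reach
$$
\mathcal{H}L(t\xi)\leq\mint_{k_nY}L(x,\xi+\nabla\phi_n(x))\,dx+\Delta^a_L(t)\Big(\bar a+\mint_{k_nY}L(x,\xi+\nabla\phi_n(x))\,dx\Big);
$$
letting $n\to\infty$ then yields $\mathcal{H}L(t\xi)-\mathcal{H}L(\xi)\leq\Delta^a_L(t)\big(\bar a+\mathcal{H}L(\xi)\big)$, hence $\Delta^{\bar a}_{\mathcal{H}L}(t)\leq\Delta^a_L(t)$, and taking $\limsup_{t\to1}$ together with the periodic ru-usc hypothesis gives $\limsup_{t\to1}\Delta^{\bar a}_{\mathcal{H}L}(t)\leq0$, i.e.\ $\mathcal{H}L$ is ru-usc.

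Two minor points will need a word. When $\Delta^a_L(t)=\infty$ the inequality $\Delta^{\bar a}_{\mathcal{H}L}(t)\leq\Delta^a_L(t)$ is automatic, and for $t$ near $1$ --- the only regime that matters --- the hypothesis guarantees $\Delta^a_L(t)<\infty$, so passing to the limit in $n$ on the right-hand side above is unproblematic; and the membership $\xi+\nabla\phi_n(x)\in\LL_x$ for a.a.\ $x$, required before invoking the ru-usc bound, is immediate from the finiteness of the corresponding average. I do not anticipate a real obstacle: the argument is the verbatim analogue of Proposition~\ref{Coro-Prop-ru-usc1}, the one ``idea'' being the remark that averaging a $1$-periodic weight over $kY$ returns the fixed constant $\bar a$, which is precisely what allows a single constant weight to serve the $x$-independent integrand $\mathcal{H}L$.
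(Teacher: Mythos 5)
Your proposal is correct and follows essentially the same route as the paper's proof: take a minimizing sequence $\{k_n;\phi_n\}_n$ for $\mathcal{H}L(\xi)$, compare $\mathcal{H}L(t\xi)$ against $t\phi_n$, apply the pointwise ru-usc bound, and exploit the $1$-periodicity of $a$ to replace $\mint_{k_nY}a$ with the constant $\bar a=\int_Y a$, thereby obtaining $\Delta^{\bar a}_{\mathcal{H}L}(t)\leq\Delta^a_L(t)$ (the paper writes $\langle a\rangle$ for your $\bar a$). The small precautions you flag (discarding indices so that $\xi+\nabla\phi_n\in\LL_x$ a.e., only $t$ near $1$ matters) are sound and implicit in the paper's version.
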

\begin{proof}
Fix any $t\in[0,1]$ and any $\xi\in\mathcal{H}\LL$, where $\mathcal{H}\LL$ denotes the effective domain of $\mathcal{H}L$. By definition, there exists $\{k_n;\phi_n\}_n$ such that:
\begin{itemize}
\item[\SMALL$\blacklozenge$] $\phi_n\in W^{1,p}_0(k_nY;\RR^m)$ for all $n\geq 1$;
\item[\SMALL$\blacklozenge$] $\displaystyle\mathcal{H}L(\xi)=\lim_{n\to\infty}\mint_{k_nY}L(x,\xi+\nabla\phi_n(x))dx$;
\item[\SMALL$\blacklozenge$] $\xi+\nabla\phi_n(x)\in \LL_x$ for all $n\geq 1$ and a.a. $x\in k_nY$.
\end{itemize}
Moreover, for every $n\geq 1$,
$$
\mathcal{H}L(t\xi)\leq\mint_{k_n Y}L(x,t(\xi+\nabla\phi_n(x)))dx
$$
since $t\phi_n\in W^{1,p}_0(k_nY;\RR^m)$, and so
$$
\mathcal{H}L(t\xi)-\mathcal{H}L(\xi)\leq\liminf_{n\to\infty}\mint_{k_nY}
\big(L(x,t(\xi+\nabla\phi_n(x)))-L(x,\xi+\nabla\phi_n(x))\big)dx.
$$
As $L$ is periodically ru-usc it follows that
$$
\mathcal{H}L(t\xi)-\mathcal{H}L(\xi)\leq\Delta^a_L(t)\big(\langle a\rangle+\mathcal{H}L(\xi)\big)
$$
with $\langle a\rangle:=\int_Ya(y)dy$, which implies that $\Delta^{\langle a\rangle}_{\mathcal{H}L}(t)\leq\Delta^a_L(t)$ for all $t\in[0,1]$, and the proof is complete.
\end{proof}

\medskip

As a consequence of Theorem \ref{Extension-Result-for-ru-usc-Functions} and Propositions \ref{Coro-Prop-ru-usc1} and \ref{Coro-Prop-ru-usc2} we have

\begin{corollary}\label{Corollary-used-in-intro}
Let $W:\RR^d\times\MM^{m\times d}\to [0,\infty]$ be a Borel measurable function. If $W$ is periodically ru-usc and if $t\overline{\Z\mathcal{H}\WW}\subset{\rm int}(\Z\mathcal{H}\WW)$ for all $t\in]0,1[$, where $\Z\mathcal{H}\WW$ denotes the effective domain of $\Z\mathcal{H}W$, then
$$
\widehat{\Z\mathcal{H}W}(\xi)=\left\{
\begin{array}{ll}
\Z\mathcal{H}W(\xi)&\hbox{if }\xi\in{\rm int}(\Z\mathcal{H}\WW)\\
\lim\limits_{t\to 1}\Z\mathcal{H}W(t\xi)&\hbox{if }\xi\in\partial(\Z\mathcal{H}\WW)\\
\infty&\hbox{otherwise.}
\end{array}
\right.
$$
\end{corollary}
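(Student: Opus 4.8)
The plan is to recognize that the displayed formula for $\widehat{\Z\mathcal{H}W}$ is nothing but the conclusion of Theorem~\ref{Extension-Result-for-ru-usc-Functions}(i) applied to the choice $U=\RR^d$ and $L(x,\xi):=\Z\mathcal{H}W(\xi)$, i.e. to $L$ independent of its first variable. For this $L$ one has $\LL_x=\Z\mathcal{H}\WW$ and $\widehat{L}=\widehat{\Z\mathcal{H}W}$ for every $x$, so the whole argument reduces to verifying the three hypotheses of that theorem for this $L$: that $L$ is ru-usc, that \eqref{Homothecie-Assumption-Bis} holds, and that $L(x,\cdot)=\Z\mathcal{H}W$ is lsc on ${\rm int}(\LL_x)={\rm int}(\Z\mathcal{H}\WW)$.

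The first two hypotheses are immediate. Since $W$ is periodically ru-usc, Proposition~\ref{Coro-Prop-ru-usc2} shows that $\mathcal{H}W$ is ru-usc, with the constant weight $\langle a\rangle=\int_Ya(y)\,dy\in]0,\infty[$; applying Proposition~\ref{Coro-Prop-ru-usc1} to $\mathcal{H}W$, regarded as a function on $\RR^d\times\MM^{m\times d}$ that does not depend on $x$, we get that $\Z(\mathcal{H}W)=\Z\mathcal{H}W$ is ru-usc, with the same weight. As for \eqref{Homothecie-Assumption-Bis}, it reads here $t\overline{\Z\mathcal{H}\WW}\subset{\rm int}(\Z\mathcal{H}\WW)$ for all $t\in]0,1[$, which is precisely the standing assumption of the corollary (and it entails \eqref{Homothecie-Assumption}).

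The third hypothesis is the delicate point, and the one I expect to be the real obstacle: the lower semicontinuity of $\Z\mathcal{H}W$ on ${\rm int}(\Z\mathcal{H}\WW)$. The strategy here is to show that the piecewise-affine relaxation operator $\Z(\cdot)$ produces a \emph{rank-one convex} function: given $\xi=\lambda\xi_1+(1-\lambda)\xi_2$ with $\xi_1-\xi_2$ of rank one, one picks near-optimal competitors $\phi_1,\phi_2\in\Aff_0(Y;\RR^m)$ for $\xi_1$ and $\xi_2$ and splices suitably rescaled copies of $\phi_1,\phi_2$ onto the thin layers of a simple laminate oriented perpendicularly to the rank-one direction; the resulting function lies in $\Aff_0(Y;\RR^m)$, is admissible for $\xi$, and yields $\Z\mathcal{H}W(\xi)\le\lambda\,\Z\mathcal{H}W(\xi_1)+(1-\lambda)\,\Z\mathcal{H}W(\xi_2)$. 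A rank-one convex function is separately convex, hence locally Lipschitz — a fortiori lsc — on every open set on which it is finite, in particular on ${\rm int}(\Z\mathcal{H}\WW)$. The technical difficulty in carrying this out rigorously is to reconcile the splicing with the homogeneous boundary datum built into $\Aff_0(Y;\RR^m)$ \emph{and} with the fact that $\mathcal{H}W$ may take the value $+\infty$: the transition layers near $\partial Y$ must be given arbitrarily small volume while keeping the gradients they carry inside ${\rm int}(\Z\mathcal{H}\WW)$, which one arranges by performing the whole construction inside a ball compactly contained in ${\rm int}(\Z\mathcal{H}\WW)$ around the point where lower semicontinuity is tested, where $\mathcal{H}W$ and the competitors stay under control.

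Once the three hypotheses are in place, Theorem~\ref{Extension-Result-for-ru-usc-Functions}(i) gives $\widehat{\Z\mathcal{H}W}(\xi)=\Z\mathcal{H}W(\xi)$ on ${\rm int}(\Z\mathcal{H}\WW)$, $\widehat{\Z\mathcal{H}W}(\xi)=\lim_{t\to1}\Z\mathcal{H}W(t\xi)$ on $\partial(\Z\mathcal{H}\WW)$ — where the $\liminf$ in the definition of $\widehat{\Z\mathcal{H}W}$ is a genuine limit by Lemma~\ref{Pre-Extension-Result-for-ru-usc-Functions} — and $\widehat{\Z\mathcal{H}W}(\xi)=\infty$ otherwise, which is exactly the asserted formula.
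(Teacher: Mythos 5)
Your overall route coincides with the paper's: reduce everything to Theorem~\ref{Extension-Result-for-ru-usc-Functions}(i) applied to the $x$-independent integrand $\Z\mathcal{H}W$, and verify its hypotheses. The ru-usc step is verified exactly as in the paper, by Propositions~\ref{Coro-Prop-ru-usc2} and~\ref{Coro-Prop-ru-usc1}, and the homothety hypothesis~\eqref{Homothecie-Assumption-Bis} is the standing assumption of the corollary, also as in the paper.

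Where you diverge is the lower semicontinuity (in fact, continuity) of $\Z\mathcal{H}W$ on ${\rm int}(\Z\mathcal{H}\WW)$. The paper does not prove this; it invokes it as a black box, namely Lemma~\ref{Fonseca-Lemma} (``$\Z L$ is continuous on ${\rm int}(\Z\LL)$''), cited to Fonseca~\cite{fonseca88}. You instead sketch a direct argument: show $\Z(\cdot)$ produces a rank-one convex function and conclude local Lipschitz continuity on the interior of the finite set. That is indeed the mechanism underlying Fonseca's result, but your sketch has a genuine gap precisely at the point you flag. In the laminate construction the transition layers near $\partial Y$ carry gradients of the form $\xi+\nabla\phi$ that must lie in the effective domain of the \emph{integrand} $\mathcal{H}W$, i.e.\ in $\mathcal{H}\WW$, for the competing integral $\int_Y\mathcal{H}W(\xi+\nabla\phi)$ to be finite. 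Your proposed remedy is to keep them ``inside ${\rm int}(\Z\mathcal{H}\WW)$'', which is the wrong set: since $\Z L\le L$ one only has $\mathcal{H}\WW\subset\Z\mathcal{H}\WW$, so a gradient in ${\rm int}(\Z\mathcal{H}\WW)$ may still be a point where $\mathcal{H}W=\infty$, and the laminate competitor then has infinite energy. The ``work inside a ball compactly contained in ${\rm int}(\Z\mathcal{H}\WW)$'' remark does not resolve this: the transition-layer gradients are not controlled by the ball in which $\xi$, $\xi_1$, $\xi_2$ sit, but by the geometry of the cut-off, and they can exit $\mathcal{H}\WW$. Making the rank-one-convexity (or, more modestly, separate-convexity) argument rigorous for extended-real-valued $L$ requires a more careful treatment of the boundary layer — e.g.\ filling it with rescaled copies of a competitor for $\xi$ itself and using a Vitali covering — which is exactly the content of Fonseca's lemma. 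So your plan reproduces the paper's proof except that it replaces a cited lemma with an incomplete sketch of that lemma; the remainder is correct.
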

\begin{proof}
First of all, we can assert that $\Z\mathcal{H}W$ is continuous on ${\rm int}(\Z\mathcal{H}\WW)$ because of the following lemma due to Fonseca (see \cite{fonseca88}).
\begin{lemma}\label{Fonseca-Lemma}
$\Z L$ is continuous on ${\rm int}(\Z\LL)$.
\end{lemma}
On the other hand, from Proposition \ref{Coro-Prop-ru-usc2} we see that $\mathcal{H}W$ is ru-usc, hence $\Z\mathcal{H}W$ is ru-usc by Proposition \ref{Coro-Prop-ru-usc1}, and the result follows from Theorem \ref{Extension-Result-for-ru-usc-Functions}. 
\end{proof}

\subsection{A subadditive theorem}
Let $\mathcal{O}_b(\RR^d)$ be the class of all bounded open subsets of $\RR^d$. We begin with the following definition.

\begin{definition}
Let $\mathcal{S}:\mathcal{O}_b(\RR^d)\to[0,\infty]$ be a set function.
\begin{itemize}
\item[(i)] We say that $\mathcal{S}$ is subadditive if 
$$
\mathcal{S}(A)\leq \mathcal{S}(B)+\mathcal{S}(C)
$$
for all $A,B,C\in\mathcal{O}_b(\RR^d)$ with $B,C\subset A$, $B\cap C=\emptyset$ and $|A\setminus B\cup C|=0$.
\item[(ii)] We say that $\mathcal{S}$ is $\ZZ^d$-invariant if
$$
\mathcal{S}(A+z)=\mathcal{S}(A)
$$
for all $A\in\mathcal{O}_b(\RR^d)$ and all $z\in\ZZ^d$.
\end{itemize}
\end{definition}
Let ${\rm Cub}(\RR^d)$ be the class of all open cubes in $\RR^d$ and let $Y:=]0,1[^d$. The following theorem is due to Akcoglu and Krengel (see \cite{akcoglu-krengel81}, see also \cite{licht-michaille02} and \cite[\S B.1]{alvarez-jpm02}).
\begin{theorem}\label{AK-SubadditiveTheorem}
Let $\mathcal{S}:\mathcal{O}_b(\RR^d)\to[0,\infty]$ be a subadditive and $\ZZ^d$-invariant set function for which there exists $c>0$ such that
\begin{equation}\label{Subbaditive-Hypothesis}
\mathcal{S}(A)\leq c|A|
\end{equation}
for all $A\in\mathcal{O}_b(\RR^d)$. Then, for every $Q\in{\rm Cub}(\RR^d)$,
$$
\lim_{\eps\to0}{\mathcal{S}\left({1\over\eps}Q\right)\over\left|{1\over\eps}Q\right|}=\inf_{k\geq 1}{S(kY)\over k^d}.
$$
\end{theorem}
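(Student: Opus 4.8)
The plan is to prove the two one–sided estimates
$$
\limsup_{\eps\to0}\frac{\mathcal{S}\left(\frac1\eps Q\right)}{\left|\frac1\eps Q\right|}\le L\qquad\text{and}\qquad\liminf_{\eps\to0}\frac{\mathcal{S}\left(\frac1\eps Q\right)}{\left|\frac1\eps Q\right|}\ge L,
$$
where $L:=\inf_{k\ge1}\mathcal{S}(kY)/k^d$; note that \eqref{Subbaditive-Hypothesis} forces $0\le L\le c<\infty$, so no infinite quantities occur. Write the (axis-parallel) cube as $Q=\prod_{i=1}^{d}{]b_i,b_i+\ell[}$ and set $s:=\ell/\eps$, so that $\frac1\eps Q$ is a cube of side $s$ with $|\frac1\eps Q|=s^d$ and $s\to\infty$ as $\eps\to0$. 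The only tools are $\ZZ^d$-invariance and subadditivity, the latter used for finite partitions into pairwise disjoint open pieces covering the whole set up to a null set — obtained by iterating the two-set inequality in the definition of subadditive set function — together with the a priori bound \eqref{Subbaditive-Hypothesis} to control the error terms produced by cube boundaries.

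\textbf{Upper bound.} Fix $k\ge1$ and cut $\frac1\eps Q$ by the lattice of cubes $kY+kz$, $z\in\ZZ^d$. The cells entirely contained in $\frac1\eps Q$ number at most $(s/k)^d$, and by $\ZZ^d$-invariance each of them contributes exactly $\mathcal{S}(kY)$; the cells meeting $\partial(\frac1\eps Q)$ lie in a neighbourhood of that boundary of width $\sqrt d\,k$, so there are at most $\kappa_d (s/k)^{d-1}$ of them for some dimensional constant $\kappa_d$, and by \eqref{Subbaditive-Hypothesis} each contributes at most $ck^d$ (being an open subset of a cube of side $k$). Iterated subadditivity then gives
$$
\mathcal{S}\left(\frac1\eps Q\right)\le\left(\frac sk\right)^d\mathcal{S}(kY)+\kappa_d\,c\,k\,s^{d-1}.
$$
Dividing by $s^d$ and letting $\eps\to0$ with $k$ fixed yields $\limsup_{\eps\to0}\mathcal{S}(\frac1\eps Q)/s^d\le\mathcal{S}(kY)/k^d$, and since $k\ge1$ is arbitrary the first estimate follows.

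\textbf{Lower bound.} Put $n:=\lceil s\rceil+1\in\NN$, so that $s\le n\le s+2$. For each $i$ the integers $a_i$ satisfying $a_i\le b_i/\eps$ and $a_i+n\ge b_i/\eps+s$ form an interval of length $n-s\ge1$, hence this interval contains an integer; choosing one such $a_i$ for each $i$ we obtain $a=(a_1,\dots,a_d)\in\ZZ^d$ with $\frac1\eps Q\subset K:=nY+a$. Partition $K$ into $\frac1\eps Q$ and the open remainder $K\setminus\overline{\frac1\eps Q}$, whose measure is $n^d-s^d$; subadditivity and $\ZZ^d$-invariance give $\mathcal{S}(nY)=\mathcal{S}(K)\le\mathcal{S}(\frac1\eps Q)+c(n^d-s^d)$, while $\mathcal{S}(nY)\ge L\,n^d$ by definition of $L$. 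Hence, using $n\ge s$ and $L\ge0$,
$$
\frac{\mathcal{S}\left(\frac1\eps Q\right)}{s^d}\ge L\,\frac{n^d}{s^d}-c\,\frac{n^d-s^d}{s^d}\ge L-c\left[\left(1+\frac2s\right)^{d}-1\right],
$$
and letting $\eps\to0$ (so $s\to\infty$) yields $\liminf_{\eps\to0}\mathcal{S}(\frac1\eps Q)/s^d\ge L$.

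Combining the two estimates proves the theorem. The argument is essentially bookkeeping: the content is entirely in the interplay of subadditivity with $\ZZ^d$-invariance, which lets one replace lattice translates of $kY$ by $\mathcal{S}(kY)$ in both directions. The one point requiring a little care — and the only thing I expect to be a minor obstacle rather than a conceptual one — is the arbitrary placement of $\frac1\eps Q$ relative to the integer lattice: in the upper bound this is exactly what generates the $O(c\,k\,s^{d-1})$ boundary term, and in the lower bound it is why one inflates $\lceil s\rceil$ by an extra unit, guaranteeing an integer cube $K\supset\frac1\eps Q$ with $|K|=(1+o(1))s^d$; both error contributions must be checked to be $o(s^d)$ at fixed $k$, which is the only computation to carry out in detail.
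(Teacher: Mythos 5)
Your proof is correct and follows essentially the same route as the paper: inner tiling of $\tfrac1\eps Q$ by $\ZZ^d$-translates of $kY$ to get $\limsup\le\mathcal S(kY)/k^d$, outer enclosure of $\tfrac1\eps Q$ in an integer-sided, integer-translated cube to get $\liminf\ge L$, with boundary terms controlled by \eqref{Subbaditive-Hypothesis} and shown to be $o(s^d)$ at fixed $k$. The paper packages both one-sided estimates around the single sandwich inclusion $(k_\eps-2)kY+k(z_\eps+\hat e)\subset\frac1\eps Q\subset k_\eps kY+kz_\eps$, whereas you build the inner and outer lattice approximations separately and bound the shell in the lower estimate directly by its Lebesgue measure rather than by counting unit translates; these are only bookkeeping differences, and your version is, if anything, slightly more streamlined. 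The only (harmless) gap is that the count $\kappa_d(s/k)^{d-1}$ of boundary cells is valid only for $s\gtrsim k$, but since $k$ is fixed and $s\to\infty$ this is automatic for $\eps$ small.
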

\begin{proof}
Fix $Q\in{\rm Cub}(\RR^d)$. First of all, it is easy to see that, for each $k\geq 1$ and each $\eps>0$, there exist $k_\eps\geq 1$ and $z_\eps\in\ZZ^d$ such that $\lim_{\eps\to0}k_\eps=\infty$ and 
\begin{equation}\label{fundamental-inclusion}
(k_\eps-2)kY+k(z_\eps+\hat e)\subset{1\over\eps}Q\subset k_\eps kY+kz_\eps
\end{equation}
with $\hat e:=(1,1,\cdots,1)$. Fix any $k\geq 1$ and any $\eps>0$. As the set function $\mathcal{S}$ is subadditive and $\ZZ^d$-invariant, using the left inclusion in \eqref{fundamental-inclusion} we obtain
$$
\mathcal{S}\left({1\over\eps}Q\right)\leq(k_\eps-2)^d\mathcal{S}(kY)+\mathcal{S}\left({1\over\eps}Q\setminus\left((k_\eps-2)k\overline{Y}+k(z_\eps+\hat e)\right)\right).
$$
Moreover, it is clear that
$$
\left|\Big[{1\over\eps}Q\setminus\left((k_\eps-2)k\overline{Y}+k(z_\eps+\hat e)\right)\Big]\setminus\cupp_{i\in I}(A_i+q_i)\right|=0
$$
where $q_i\in\ZZ^d$ and $\{A_i\}_{i\in I}$ is a finite family of disjoint open subsets of $kY$ with ${\rm card}(I)=k_\eps^d-(k_\eps-2)^d$, and so 
$$
\mathcal{S}\left({1\over\eps}Q\right)\leq (k_\eps-2)^d\mathcal{S}(kY)+c(k_\eps^d-(k_\eps-2)^d)k^d
$$
by \eqref{Subbaditive-Hypothesis}. It follows that
$$
{\mathcal{S}\left({1\over\eps}Q\right)\over\left|{1\over\eps}Q\right|}\leq{\mathcal{S}(kY)\over k^d}+c{k_\eps^d-(k_\eps-2)^d\over (k_\eps-2)^d}
$$
because $|{1\over\eps}Q|\geq (k_\eps-2)^dk^d$ by the left inequality in \eqref{fundamental-inclusion}. Letting $\eps\to0$ and passing to the infimum on $k$, we obtain
$$
\limsup_{\eps\to0}{\mathcal{S}\left({1\over\eps}Q\right)\over\left|{1\over\eps}Q\right|}\leq\inf_{k\geq 1}{\mathcal{S}(kY)\over k^d}.
$$
On the other hand, using the right inequality in \eqref{fundamental-inclusion} with $k=1$, by subadditivity and $\ZZ^d$-invariance we have
$$
\mathcal{S}(k_\eps Y)\leq \mathcal{S}\left({1\over\eps}Q\right)+\mathcal{S}\left((k_\eps Y+z_\eps)\setminus {1\over\eps}\overline{Q}\right).
$$
As previously, since, up to a set of zero Lebesgue measure, the set $(k_\eps Y+z_\eps)\setminus {1\over\eps}\overline{Q}$ can be written as the disjoint union of $k_\eps^d-(k_\eps-2)^d$ integer translations of open subsets of $Y$, by using \eqref{Subbaditive-Hypothesis}, we deduce that
$$
\mathcal{S}(k_\eps Y)\leq \mathcal{S}\left({1\over\eps}Q\right)+c(k_\eps^d-(k_\eps-2)^d),
$$
and consequently
$$
\inf_{k\geq 1}{\mathcal{S}(kY)\over k^d}\leq {\mathcal{S}(k_\eps Y)\over k_\eps^d}\leq {\mathcal{S}\left({1\over\eps}Q\right)\over\left|{1\over\eps}Q\right|}+c{k_\eps^d-(k_\eps-2)^d\over k_\eps^d}
$$
because $|{1\over\eps}Q|\leq k_\eps^d$ by the right inequality in \eqref{fundamental-inclusion} with $k=1$. Letting $\eps\to 0$ we obtain 
$$
\inf_{k\geq 1}{\mathcal{S}(kY)\over k^d}\leq \liminf_{\eps\to0}{\mathcal{S}\left({1\over\eps}Q\right)\over\left|{1\over\eps}Q\right|},
$$
and the proof is complete.
\end{proof}

\medskip

Given a Borel measurable function $W:\RR^d\times\MM^{m\times d}\to[0,\infty]$, for each $\xi\in\MM^{m\times d}$, we define $\mathcal{S}_\xi:\mathcal{O}_b(\RR^d)\to[0,\infty]$ by 
\begin{equation}\label{SuBaDDiTiVe-W}
\mathcal{S}_\xi(A):=\inf\left\{\int_AW(x,\xi+\nabla\phi(x))dx:\phi\in W^{1,p}_0(A;\RR^m)\right\}.
\end{equation}
It is easy that the set function $\mathcal{S}_\xi$ is subbadditive. If we assume that $W$ is $1$-periodic with respect to the first variable, then $\mathcal{S}_\xi$ is $\ZZ^d$-invariant. Moreover, if $W$ is such that there exist a Borel measurable function $G:\MM^{m\times d}\to[0,\infty]$ and $\beta>0$ such that
\begin{equation}\label{HypotheSis-SubAdditiVe-Particular}
W(x,\xi)\leq\beta(1+G(\xi))
\end{equation}
for all $\xi\in\MM^{m\times d}$, then
$$
\mathcal{S}_\xi(A)\leq\beta(1+G(\xi))|A|
$$
for all $A\in\mathcal{O}_b(\RR^d)$. Denote the effective domain of $G$ by $\GG$. From the above, we see that the following result is a direct consequence of Theorem \ref{AK-SubadditiveTheorem}.
\begin{corollary}\label{SubadditiveTheorem}
Assume that $W$ is $1$-periodic with respect to the first variable and satisfies \eqref{HypotheSis-SubAdditiVe-Particular}. Then, for every $\xi\in\GG$,
$$
\lim_{\eps\to 0}{\mathcal{S}_\xi\left({1\over\eps}Q\right)\over\left|{1\over\eps}Q\right|}=\inf_{k\geq 1}{\mathcal{S}_\xi(kY)\over k^d}.
$$
\end{corollary}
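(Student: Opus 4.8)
The plan is to verify that, for each fixed $\xi\in\GG$, the set function $\mathcal{S}_\xi$ defined in \eqref{SuBaDDiTiVe-W} satisfies the three hypotheses of Theorem \ref{AK-SubadditiveTheorem} --- subadditivity, $\ZZ^d$-invariance and the linear upper bound \eqref{Subbaditive-Hypothesis} --- and then to invoke that theorem with the constant $c:=\beta(1+G(\xi))$, which is finite precisely because $\xi$ lies in the effective domain $\GG$ of $G$. Note first that $\phi\equiv 0$ is always admissible, so $\mathcal{S}_\xi(A)$ is a well-defined element of $[0,\infty]$ for every $A\in\mathcal{O}_b(\RR^d)$.

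For subadditivity, fix $A,B,C\in\mathcal{O}_b(\RR^d)$ with $B,C\subset A$, $B\cap C=\emptyset$ and $|A\setminus(B\cup C)|=0$, and pick $\phi_B\in W^{1,p}_0(B;\RR^m)$ and $\phi_C\in W^{1,p}_0(C;\RR^m)$. Extending $\phi_B$ and $\phi_C$ by zero, each extension is a $W^{1,p}$-limit of $C^\infty_c$-functions supported in its original domain, hence supported in $A$, so both extensions and their sum $\phi:=\widetilde\phi_B+\widetilde\phi_C$ belong to $W^{1,p}_0(A;\RR^m)$. Splitting $\int_A W(x,\xi+\nabla\phi(x))dx$ over $B$, $C$ and $A\setminus(B\cup C)$, the last piece is an integral over a Lebesgue-null set and therefore vanishes, so $\mathcal{S}_\xi(A)\le\int_B W(x,\xi+\nabla\phi_B(x))dx+\int_C W(x,\xi+\nabla\phi_C(x))dx$; taking the infimum over $\phi_B$ and $\phi_C$ gives $\mathcal{S}_\xi(A)\le\mathcal{S}_\xi(B)+\mathcal{S}_\xi(C)$.

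For $\ZZ^d$-invariance, fix $z\in\ZZ^d$ and $\phi\in W^{1,p}_0(A;\RR^m)$; the translate $\psi(\cdot):=\phi(\cdot-z)$ lies in $W^{1,p}_0(A+z;\RR^m)$ with $\nabla\psi(x)=\nabla\phi(x-z)$, so the change of variables $y=x-z$ together with the $1$-periodicity \eqref{periodicity} of $W$ yields
$$
\int_{A+z}W(x,\xi+\nabla\psi(x))dx=\int_{A}W(y+z,\xi+\nabla\phi(y))dy=\int_{A}W(y,\xi+\nabla\phi(y))dy.
$$
Taking the infimum over $\phi$ gives $\mathcal{S}_\xi(A+z)\le\mathcal{S}_\xi(A)$, and applying the same inequality with $A+z$ and $-z$ in place of $A$ and $z$ gives the reverse, hence $\mathcal{S}_\xi(A+z)=\mathcal{S}_\xi(A)$. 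Finally, testing with $\phi\equiv0$ and using \eqref{HypotheSis-SubAdditiVe-Particular} gives $\mathcal{S}_\xi(A)\le\int_A W(x,\xi)dx\le\beta(1+G(\xi))|A|$, i.e.\ \eqref{Subbaditive-Hypothesis} with $c=\beta(1+G(\xi))<\infty$. With all hypotheses verified, Theorem \ref{AK-SubadditiveTheorem} applied to $\mathcal{S}_\xi$ delivers the claimed limit for every $Q\in{\rm Cub}(\RR^d)$. There is no genuine obstacle here; the argument is essentially bookkeeping, the only mildly delicate point being the gluing step in the subadditivity proof --- ensuring the zero-extended test functions lie in $W^{1,p}_0(A;\RR^m)$ and that the leftover null set $A\setminus(B\cup C)$ contributes nothing --- and the role of $\xi\in\GG$, which is exactly what keeps the bounding constant $c$ finite.
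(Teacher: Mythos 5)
Your proof is correct and follows exactly the same route as the paper: verify that $\mathcal{S}_\xi$ is subadditive, $\ZZ^d$-invariant, and satisfies the linear bound $\mathcal{S}_\xi(A)\le\beta(1+G(\xi))|A|$, then invoke Theorem \ref{AK-SubadditiveTheorem}. The paper leaves these three verifications as ``easy,'' and your write-up supplies precisely the expected details (zero-extension and gluing of test functions for subadditivity, translation plus $1$-periodicity for $\ZZ^d$-invariance, and the test function $\phi\equiv 0$ for the bound), with $\xi\in\GG$ used exactly where it matters, to ensure the constant $c=\beta(1+G(\xi))$ is finite.
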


\subsection{Approximation of integrals with convex growth} We begin with the following definition.
\begin{definition}\label{StronglyStar-Shaped-Def}
An open set $\Omega\subset\RR^d$ is said to be strongly star-shaped if there exists $x_0\in\Omega$ such that
$$
\overline{-x_0+\Omega}\subset t(-x_0+\Omega)\hbox{ for all }t>1.
$$
\end{definition}

In what follows, $\Aff(\Omega;\RR^m)$ denotes the space of continuous piecewise affine functions from $\Omega$ to $\RR^m$. The following lemma can be found in \cite[Lemma 3.6(b)]{muller87} (see also \cite[Chapitre X, \S 2.3 p. 288-293]{ekeland-temam74}).

\begin{lemma}\label{Approx-Lemma-CPAF}
Let $\Omega\subset\RR^d$ be a bounded open set with Lipschitz boundary which is strongly star-shaped, let $\Psi:\MM^{m\times d}\to[0,\infty]$ be a convex function and let $u\in W^{1,p}(\Omega;\RR^m)$ be such that 
$$
\int_\Omega \Psi(\nabla u(x))dx<\infty.
$$
Denote the effective domain of $\Psi$ by $\DD$. If $\DD$ is open then there exists $\{u_n\}_{n}\subset\Aff(\Omega;\RR^m)$ such that{\rm:}
\begin{itemize}
\item[\SMALL$\blacklozenge$] $\displaystyle\lim_{n\to\infty}\|u_n-u\|_{W^{1,p}(\Omega;\RR^m)}=0;$
\item[\SMALL$\blacklozenge$] $\displaystyle\lim_{n\to\infty}\|\Psi(\nabla u_n)-\Psi(\nabla u)\|_{L^1(\Omega)}=0$.
\end{itemize}
In particular, $\nabla u_n(x)\in\DD$ for all $n\geq 1$ and a.a. $x\in\Omega$. 
\end{lemma}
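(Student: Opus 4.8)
The plan is to approximate $u$ in three successive reductions, each one bringing $u$ closer to being piecewise affine while keeping $\nabla u$ valued in a compact subset of the \emph{open} set $\DD$, on which the convex function $\Psi$ is automatically (locally uniformly) continuous; this last point is what will eventually let us pass $\Psi$ through the gradient convergence in $L^1$ rather than only in one direction.

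\textbf{Step 1 (creating room, via the star-shapedness).} After a translation we may assume $0\in\Omega$ and $\overline\Omega\subset t\Omega$ for every $t>1$, so that $\overline\Omega$ is a compact subset of the open set $t\Omega$. For $t>1$ put $u_t(x):=t\,u(x/t)$; then $u_t\in W^{1,p}(t\Omega;\RR^m)$, $\nabla u_t(x)=\nabla u(x/t)$, and $\int_{t\Omega}\Psi(\nabla u_t)=t^d\int_\Omega\Psi(\nabla u)<\infty$. Applying $L^p$- and $L^1$-continuity of dilations to $\nabla u$ and to $\Psi(\nabla u)\in L^1(\Omega)$ (both extended by $0$ outside $\Omega$) gives $u_t\to u$ in $W^{1,p}(\Omega;\RR^m)$ and $\Psi(\nabla u_t)\to\Psi(\nabla u)$ in $L^1(\Omega)$ as $t\downarrow 1$. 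By a diagonal argument it therefore suffices to approximate each $u_t$; equivalently, we may henceforth assume that $u$ is defined on an open set $\Omega'$ with $\overline\Omega\subset\Omega'$ and $\int_{\Omega'}\Psi(\nabla u)<\infty$ (note that the rescaling plays here the role of a Sobolev extension).

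\textbf{Step 2 (mollification).} Let $(\rho_\delta)_{\delta>0}$ be a standard mollifier and set $u_\delta:=u*\rho_\delta$, which is smooth on $\Omega$ for $\delta$ small. For each such $x$, $\nabla u_\delta(x)=\int\nabla u(x-y)\rho_\delta(y)\,dy$ is an average of points of $\DD$; since $\DD$ is convex and open, a Hahn--Banach separation argument shows $\nabla u_\delta(x)\in\DD$. By Jensen's inequality (convexity of $\Psi$) one has $\Psi(\nabla u_\delta)\le\Psi(\nabla u)*\rho_\delta$ on $\Omega$, hence $\limsup_{\delta\to0}\int_\Omega\Psi(\nabla u_\delta)\le\int_\Omega\Psi(\nabla u)$. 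Passing to a subsequence along which $\nabla u_\delta\to\nabla u$ a.e. and using that $\Psi$ is continuous on $\DD$ while $\nabla u\in\DD$ a.e., we get $\Psi(\nabla u_\delta)\to\Psi(\nabla u)$ a.e.; Fatou's lemma then upgrades the $\limsup$ to a genuine convergence of the integrals, and Scheffé's lemma yields $\Psi(\nabla u_\delta)\to\Psi(\nabla u)$ in $L^1(\Omega)$. Since also $u_\delta\to u$ in $W^{1,p}(\Omega;\RR^m)$, one more diagonalization reduces the lemma to the case $u\in C^\infty(\overline\Omega;\RR^m)$ with $\nabla u(\overline\Omega)$ a compact subset of $\DD$.

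\textbf{Step 3 (piecewise affine interpolation).} Fix a simplicial triangulation of $\RR^d$ of mesh size $h_n\to 0$ and let $u_n$ be the continuous piecewise affine interpolant of $u$; since $\Omega$ is bounded it meets only finitely many simplices, and (using that $|\partial\Omega|=0$ because $\partial\Omega$ is Lipschitz) the restriction $u_n|_\Omega$ belongs to $\Aff(\Omega;\RR^m)$. The standard interpolation estimate for $C^2$ maps gives $\nabla u_n\to\nabla u$ uniformly on $\overline\Omega$, whence $u_n\to u$ in $W^{1,p}(\Omega;\RR^m)$; moreover, for $n$ large $\nabla u_n(\overline\Omega)$ stays in a fixed compact neighbourhood $K$ of $\nabla u(\overline\Omega)$ contained in $\DD$, on which $\Psi$ is uniformly continuous, so $\Psi(\nabla u_n)\to\Psi(\nabla u)$ uniformly on $\Omega$, a fortiori in $L^1(\Omega)$. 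Finiteness of $\Psi(\nabla u_n)$ gives $\nabla u_n(x)\in\DD$ for a.a. $x\in\Omega$, the final assertion of the lemma. Combining the three steps by a last diagonal extraction completes the proof.

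The main obstacle is not any single estimate but the global constraint that the approximating gradients must never leave the open effective domain $\DD$: this is exactly why the star-shapedness is used in Step 1 to gain a collar around $\overline\Omega$ before mollifying, why openness of $\DD$ is exploited both in the separation argument and for continuity of $\Psi$, and why the convergence of $\Psi(\nabla u_n)$ can ultimately be obtained in $L^1$ (through Jensen $+$ Fatou $+$ Scheffé, and then uniformly) rather than merely as the one-sided $\liminf$ inequality that lower semicontinuity alone would provide.
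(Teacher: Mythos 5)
Your proof is correct and follows the same strategy as the paper's: first a smooth approximation of $u$ whose gradient lands in $\DD$ with $L^1$-convergence of $\Psi(\nabla\cdot)$ (the paper cites M\"uller's Lemma 3.6(a) and Ekeland--Temam for this step, while you reprove it from scratch via dilation, mollification, a separation argument, and Jensen/Fatou/Scheff\'e), then a piecewise affine interpolation converging in $W^{1,\infty}$, then uniform continuity of the convex $\Psi$ on a compact subset of the open domain $\DD$, and finally a diagonal extraction. The two arguments are structurally identical; you simply supply the details of the smooth-approximation step that the paper defers to references.
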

\begin{proof}
From the proof of \cite[Lemma 3.6(a)]{muller87} (see also \cite[Proof of Proposition 2.6 p. 289-291]{ekeland-temam74}) we can extract the fact that there exists $\{v_k;\Omega_k\}_k$ such that:
\begin{eqnarray}
&&\hbox{for every }k\geq 1,\ v_k\in C^\infty(\Omega_k;\RR^m)\hbox{ where }\Omega_k\supset\overline{\Omega} \hbox{ is a bounded open set};\label{Muller-Lemma-eq1}\\
&&\hbox{for every }k\geq 1,\ \nabla v_k(x)\in\DD\hbox{ for all }x\in\overline{\Omega};\label{Muller-Lemma-eq4}\\
&&\lim\limits_{k\to\infty}\|v_k-u\|_{W^{1,p}(\Omega;\RR^m)}=0;\label{Muller-Lemma-eq2}\\
&&\lim\limits_{k\to\infty}\|\psi(\nabla v_k)-\psi(\nabla u)\|_{L^1(\Omega)}=0.\label{Muller-Lemma-eq3}
\end{eqnarray}
Fix any $k\geq 1$. Taking \eqref{Muller-Lemma-eq1} into account, from \cite[Proposition 2.1 p. 286]{ekeland-temam74} we deduce that there exists $\{u_{n,k}\}_n\subset\Aff(\Omega;\RR^m)$ such that
\begin{equation}\label{Muller-Lemma-eq5}
\lim_{n\to\infty}\|u_{n,k}-v_k\|_{W^{1,\infty}(\Omega;\RR^m)}=0.
\end{equation}
On the other hand, using \eqref{Muller-Lemma-eq1} and \eqref{Muller-Lemma-eq4} we deduce that $\nabla v_k(x)\in\mathbb{K}\subset\DD$ for all $x\in\Omega$, where $\mathbb{K}\supset\{\nabla v_k(x):x\in\overline{\Omega}\}$ is a compact set with nonempty interior, and consequently  we can assert that for every $n\geq 1$ large enough, $\nabla v_{n,k}(x)\in\mathbb{K}$ for a.a. $x\in\Omega$ because, from \eqref{Muller-Lemma-eq5},  $\nabla u_{n,k}$ converges uniformly to $\nabla v_k$. As $\Psi$ is convex and $\DD$ is open we see that $\Psi$ is continuous on $\DD$, and so $\Psi$ is uniformly continuous on the compact $\mathbb{K}$. It follows that
\begin{equation}\label{Muller-Lemma-eq6}
\lim_{n\to\infty}\|\Psi(\nabla v_{n,k})-\Psi(\nabla v_k)\|_{L^\infty(\Omega)}=0.
\end{equation}
Letting $k\to\infty$ in \eqref{Muller-Lemma-eq5} and \eqref{Muller-Lemma-eq6} we obtain:
\begin{eqnarray}
&&\lim_{k\to\infty}\lim_{n\to\infty}\|u_{n,k}-v_k\|_{W^{1,\infty}(\Omega;\RR^m)}=0;\label{Muller-lemma-diag1}\\
&&\lim_{k\to\infty}\lim_{n\to\infty}\|\Psi(\nabla v_{n,k})-\Psi(\nabla v_k)\|_{L^\infty(\Omega)}=0.\label{Muller-lemma-diag2}
\end{eqnarray}
Combining \eqref{Muller-Lemma-eq2} and \eqref{Muller-Lemma-eq3} with \eqref{Muller-lemma-diag1} and \eqref{Muller-lemma-diag2} we conclude that
$$
\lim_{k\to\infty}\lim_{n\to\infty}\|u_{n,k}-u\|_{W^{1,p}(\Omega;\RR^m)}=0\hbox{ and }\lim_{k\to\infty}\lim_{n\to\infty}\|\Psi(\nabla v_{n,k})-\Psi(\nabla u)\|_{L^1(\Omega)}=0,
$$
and the lemma follows by diagonalization.
\end{proof}

\medskip

Let $L:\MM^{m\times d}\to[0,\infty]$ be a Borel measurable function with $G$-convex growth, i.e., there exist a convex function $G:\MM^{m\times d}\to[0,\infty]$ and $\alpha,\beta>0$ such that
\begin{equation}\label{Hyp0-Prop-CPAF}
\alpha G(\xi)\leq L(\xi)\leq\beta(1+G(\xi))
\end{equation}
for all $\xi\in\MM^{m\times d}$. Then, it is easy to see that the effective domain of $L$ is equal to the effective domain of $G$ denoted by $\GG$ and assumed to contain $0$, i.e., $0\in{\rm int}(\GG)$. The following proposition is a consequence of Lemma \ref{Approx-Lemma-CPAF}.

\begin{proposition}\label{Approx-Prop-CPAF-1}
Let $\Omega\subset\RR^d$ be a bounded open set with Lipschitz boundary which is strongly star-shaped and let $u\in W^{1,p}(\Omega;\RR^m)$ be such that
\begin{equation}\label{Hyp1-Prop-CPAF}
\int_\Omega L(\nabla u(x))dx<\infty.
\end{equation}
If $L$ is ru-usc and continuous on ${\rm int}(\GG)$ then there exists $\{u_n\}_{n}\subset\Aff(\Omega;\RR^m)$ such that{\rm:}
\begin{itemize}
\item[\SMALL$\blacklozenge$] $\displaystyle\lim_{n\to\infty}\|u_n-u\|_{W^{1,p}(\Omega;\RR^m)}=0;$
\item[\SMALL$\blacklozenge$] $\displaystyle\limsup_{n\to\infty}\int_\Omega L(\nabla u_n(x))dx\leq\int_\Omega L(\nabla u(x))dx$.
\end{itemize}
\end{proposition}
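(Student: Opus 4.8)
The plan is to first replace $u$ by the rescaled maps $tu$, $t\in]0,1[$, which pushes every gradient value into the \emph{open} convex set ${\rm int}(\GG)$ (here the hypotheses $0\in{\rm int}(\GG)$ and $\GG$ convex are essential), then apply the piecewise affine approximation Lemma \ref{Approx-Lemma-CPAF} to each $tu$ against a convex function with open effective domain, and finally let $t\to1$, which is the only step where ru-usc of $L$ is used, concluding by diagonalization.

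For fixed $t\in]0,1[$, note first that from $\alpha G\le L$ and \eqref{Hyp1-Prop-CPAF} one gets $\int_\Omega G(\nabla u)<\infty$, so $\nabla u(x)\in\GG$ for a.e.\ $x$. Define $\tilde G:\MM^{m\times d}\to[0,\infty]$ by $\tilde G:=G$ on ${\rm int}(\GG)$ and $\tilde G:=\infty$ elsewhere; since ${\rm int}(\GG)$ is convex, open and contains $0$, $\tilde G$ is a proper convex function with open effective domain ${\rm int}(\GG)$. Because $0\in{\rm int}(\GG)$ and $\GG$ is convex, $t\nabla u(x)=(1-t)0+t\nabla u(x)\in{\rm int}(\GG)$ for a.e.\ $x$, and by convexity of $G$, $\int_\Omega\tilde G(t\nabla u)=\int_\Omega G(t\nabla u)\le t\int_\Omega G(\nabla u)+(1-t)G(0)|\Omega|<\infty$. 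Hence Lemma \ref{Approx-Lemma-CPAF} applied to $tu$ with $\Psi=\tilde G$ produces $\{u_{t,n}\}_n\subset\Aff(\Omega;\RR^m)$ with $u_{t,n}\to tu$ in $W^{1,p}(\Omega;\RR^m)$, $\tilde G(\nabla u_{t,n})\to\tilde G(t\nabla u)$ in $L^1(\Omega)$, and $\nabla u_{t,n}(x)\in{\rm int}(\GG)$ for a.e.\ $x$. Up to a subsequence (not relabelled) I may then assume $\nabla u_{t,n}\to t\nabla u$ a.e.\ and $\tilde G(\nabla u_{t,n})\le g$ a.e.\ for some $g\in L^1(\Omega)$. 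Since $L$ is continuous on ${\rm int}(\GG)$ and $t\nabla u(x)\in{\rm int}(\GG)$ a.e., $L(\nabla u_{t,n}(x))\to L(t\nabla u(x))$ a.e., while $L(\nabla u_{t,n})\le\beta(1+\tilde G(\nabla u_{t,n}))\le\beta(1+g)$ by \eqref{Hyp0-Prop-CPAF}; dominated convergence gives $\int_\Omega L(\nabla u_{t,n})\to\int_\Omega L(t\nabla u)$ as $n\to\infty$.

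It remains to pass $t\to1$. By Definition \ref{ru-usc-Def} there is $a>0$ with $\limsup_{t\to1}\Delta^a_L(t)\le0$, where $\Delta^a_L(t)=\sup_{\xi\in\GG}(L(t\xi)-L(\xi))/(a+L(\xi))$, so $L(t\xi)\le L(\xi)+\Delta^a_L(t)(a+L(\xi))$ for all $\xi\in\GG$ and $t\in[0,1]$. Integrating over $\Omega$, $\int_\Omega L(t\nabla u)\le\int_\Omega L(\nabla u)+\Delta^a_L(t)(a|\Omega|+\int_\Omega L(\nabla u))$, and since $\int_\Omega L(\nabla u)<\infty$ and $\limsup_{t\to1}\Delta^a_L(t)\le0$ this yields $\limsup_{t\to1}\int_\Omega L(t\nabla u)\le\int_\Omega L(\nabla u)$. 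Choosing now $t_j\uparrow1$ and, for each $j$, $n_j$ so large that $\|u_{t_j,n_j}-t_ju\|_{W^{1,p}(\Omega;\RR^m)}<1/j$ and $\int_\Omega L(\nabla u_{t_j,n_j})\le\int_\Omega L(t_j\nabla u)+1/j$, the functions $u_j:=u_{t_j,n_j}\in\Aff(\Omega;\RR^m)$ satisfy $\|u_j-u\|_{W^{1,p}(\Omega;\RR^m)}\le1/j+(1-t_j)\|u\|_{W^{1,p}(\Omega;\RR^m)}\to0$ and $\limsup_j\int_\Omega L(\nabla u_j)\le\limsup_j\int_\Omega L(t_j\nabla u)\le\int_\Omega L(\nabla u)$, which is the assertion.

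I expect the main obstacle to be the step where the $L^1$-convergence of $\tilde G(\nabla u_{t,n})$ furnished by Lemma \ref{Approx-Lemma-CPAF} (which only controls the convex comparison function) is upgraded to convergence of the \emph{nonconvex} energy $\int_\Omega L(\nabla u_{t,n})$: this forces one to extract an a.e.-convergent, $L^1$-dominated subsequence and to combine it with the continuity of $L$ on ${\rm int}(\GG)$ and the two-sided bound \eqref{Hyp0-Prop-CPAF}. By contrast, ru-usc enters only through the elementary ``boundary-layer'' estimate bounding $\int_\Omega L(t\nabla u)-\int_\Omega L(\nabla u)$ as $t\to1$.
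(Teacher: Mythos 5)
Your proof is correct and follows essentially the same route as the paper: shrink $u$ to $tu$ so the gradients land in ${\rm int}(\GG)$, apply Lemma \ref{Approx-Lemma-CPAF} to the convex function $G$ restricted to ${\rm int}(\GG)$, upgrade the convergence of the convex comparison function to convergence of $\int_\Omega L(\nabla u_{t,n})$ via continuity of $L$ on ${\rm int}(\GG)$ and the two-sided bound \eqref{Hyp0-Prop-CPAF}, then let $t\to1$ using ru-usc and diagonalize. The only cosmetic differences are that you obtain $\int_\Omega G(t\nabla u)<\infty$ directly from convexity of $G$ (the paper routes it through ru-usc and the lower growth bound), and you extract a dominated a.e.-convergent subsequence where the paper invokes Vitali's theorem via uniform integrability.
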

\begin{proof}
From \eqref{Hyp1-Prop-CPAF} we see $\nabla u(x)\in\GG$ for a.a. $x\in\Omega$, and so
\begin{equation}\label{Hyp2-Prop-CPAF}
\int_\Omega L(t\nabla u)dx\leq(1+\Delta^a_L(t))\int_\Omega L(\nabla u)dx+\Delta^a_L(t)\|a\|_{L^1(\Omega)}\hbox{ for all }t\in]0,1[.
\end{equation}
Fix any $t\in]0,1[$. From \eqref{Hyp2-Prop-CPAF} it follows that
\begin{equation}\label{Hyp3-Prop-CPAF}
\int_\Omega L(t\nabla u(x))dx<\infty.
\end{equation}
Let $\mathring{G}:\MM^{m\times d}\to[0,\infty]$ be the convex function defined by 
$$
\mathring{G}(\xi):=\left\{
\begin{array}{ll}
G(\xi)&\hbox{if }\xi\in{\rm int}(\GG)\\
\infty&\hbox{otherwise.}
\end{array}
\right.
$$
Then, the effective domain of $\mathring{G}$ is equal to ${\rm int}(\GG)$. As $\GG$ is convex and $0\in{\rm int}(\GG)$ we have 
\begin{equation}\label{Hypo-bis5-CPAF}
t\nabla u(x)\in{\rm int}(\GG)\hbox{ for a.a. }x\in\Omega.
\end{equation}
Using \eqref{Hyp3-Prop-CPAF} and the left inequality in \eqref{Hyp0-Prop-CPAF} we deduce that
\begin{equation}\label{CPAF-Rajout}
\int_{\Omega}\mathring{G}(t\nabla u(x))dx<\infty.
\end{equation}
Applying Lemma \ref{Approx-Lemma-CPAF} with $\Psi=\mathring{G}$ we can assert there exists $\{u_{n,t}\}_{n}\subset\Aff(\Omega;\RR^m)$ such that:
\begin{eqnarray}
&&\lim_{n\to\infty}\|u_{n,t}-tu\|_{W^{1,p}(\Omega;\RR^m)}=0;\label{Hyp4-Prop-CPAF}\\
&&\lim_{n\to\infty}|\nabla u_{n,t}(x)-t\nabla u(x)|=0\hbox{ for a.a. }x\in\Omega\label{Hyp4-Prop-CPAF_bis};\\
&&\lim_{n\to\infty}\|\mathring{G}(\nabla u_{n,t})-\mathring{G}(t\nabla u)\|_{L^1(\Omega)}=0;\label{Hyp5-Prop-CPAF}\\
&& \nabla u_{n,t}(x)\in{\rm int}(\GG)\hbox{ for a.a. }x\in\Omega.\label{Hyp5-Prop-CPAF-bis}
\end{eqnarray}
From \eqref{Hyp5-Prop-CPAF-bis} and the right inequality in \eqref{Hyp0-Prop-CPAF} we see that
$$
\int_E L(\nabla u_{n,t}(x))dx\leq\beta|E|+\beta\int_E\mathring{G}(t\nabla u(x))dx+\beta\|\mathring{G}(\nabla u_{n,t})-\mathring{G}(t\nabla u)\|_{L^1(\Omega)}
$$
for all $n\geq 1$ and all Borel sets $E\subset\Omega$, which shows that $\{L(\nabla u_{n,t})\}_n$ is uniformly absolutely integrable when combined with \eqref{CPAF-Rajout} and \eqref{Hyp5-Prop-CPAF}. Moreover, $L(\nabla u_{n,t}(x))\to 	L(t\nabla u(x))$ for a.a. $x\in\Omega$ because of \eqref{Hypo-bis5-CPAF}, \eqref{Hyp5-Prop-CPAF-bis}, \eqref{Hyp4-Prop-CPAF_bis} and the continuity of $L$ on ${\rm int}(\GG)$, and consequently
$$
\lim_{n\to\infty}\int_\Omega L(\nabla u_{n,t}(x))dx=\int_\Omega L(t\nabla u(x))dx
$$
by Vitali's theorem. As $L$ is ru-usc, from \eqref{Hyp2-Prop-CPAF} we deduce that
\begin{equation}\label{CPAF_Diag1}
\limsup_{t\to1}\lim_{n\to\infty}\int_\Omega L(\nabla u_{n,t}(x))dx\leq\int_\Omega L(\nabla u(x))dx.
\end{equation}
On the other hand, it is easy to see that
$$
\|u_{n,t}-u\|_{W^{1,p}(\Omega;\RR^m)}\leq\|u_{n,t}-tu\|_{W^{1,p}(\Omega;\RR^m)}+\|tu-u\|_{W^{1,p}(\Omega;\RR^m)}
$$ 
for all $n\geq 1$ and all $t\in]0,1[$. Hence
\begin{equation}\label{CPAF_Diag2}
\lim_{t\to1}\lim_{n\to\infty}\|u_{n,t}-u\|_{W^{1,p}(\Omega;\RR^m)}=0
\end{equation}
by \eqref{Hyp4-Prop-CPAF}, and the result follows from \eqref{CPAF_Diag1} and \eqref{CPAF_Diag2} by diagonalization.
\end{proof}

\medskip

It is easily seen that, using similar arguments as in the proof of Proposition \ref{Approx-Prop-CPAF-1}, we can prove the following proposition.

\begin{proposition}\label{Approx-Prop-CPAF-2}
Let $\Omega\subset\RR^d$ be a bounded open set with Lipschitz boundary which is strongly star-shaped and let $u\in W^{1,p}(\Omega;\RR^m)$ be such that
$$
\int_\Omega L(\nabla u(x))dx<\infty\hbox{ and }\nabla u(x)\in{\rm int}(\GG)\hbox{ for a.a. }x\in\Omega. 
$$
If $L$ is continuous on ${\rm int}(\GG)$ then there exists $\{u_n\}_{n}\subset\Aff(\Omega;\RR^m)$ such that{\rm:}
\begin{itemize}
\item[\SMALL$\blacklozenge$] $\displaystyle\lim_{n\to\infty}\|u_n-u\|_{W^{1,p}(\Omega;\RR^m)}=0;$
\item[\SMALL$\blacklozenge$] $\displaystyle\lim_{n\to\infty}\int_\Omega L(\nabla u_n(x))dx=\int_\Omega L(\nabla u(x))dx$.
\end{itemize}
\end{proposition}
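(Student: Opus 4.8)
The plan is to follow the scheme of the proof of Proposition \ref{Approx-Prop-CPAF-1}, but with the two simplifications afforded by the stronger hypotheses, namely that $\nabla u(x)\in{\rm int}(\GG)$ already (so no dilation by $t<1$ is needed to enter the open set) and that $L$ is merely continuous on ${\rm int}(\GG)$ (so we do not invoke the ru-usc property). First I would observe that, since $L$ has $G$-convex growth \eqref{Hyp0-Prop-CPAF} and $\int_\Omega L(\nabla u)\,dx<\infty$, the left inequality gives $\int_\Omega \mathring{G}(\nabla u(x))\,dx<\infty$, where $\mathring{G}$ is the convex function equal to $G$ on ${\rm int}(\GG)$ and $+\infty$ elsewhere, whose effective domain is the open set ${\rm int}(\GG)$. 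This is exactly the hypothesis needed to apply Lemma \ref{Approx-Lemma-CPAF} with $\Psi=\mathring{G}$ and the given $u$ (no auxiliary parameter $t$), yielding a sequence $\{u_n\}_n\subset\Aff(\Omega;\RR^m)$ with $\|u_n-u\|_{W^{1,p}}\to 0$, $\|\mathring{G}(\nabla u_n)-\mathring{G}(\nabla u)\|_{L^1}\to 0$, $\nabla u_n(x)\in{\rm int}(\GG)$ a.e., and (extracting a subsequence) $\nabla u_n(x)\to\nabla u(x)$ a.e.

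Next I would upgrade the $L^1$-convergence of $\mathring{G}(\nabla u_n)$ to uniform absolute integrability of $\{L(\nabla u_n)\}_n$ exactly as in Proposition \ref{Approx-Prop-CPAF-1}: using the right inequality in \eqref{Hyp0-Prop-CPAF} together with $\nabla u_n(x)\in{\rm int}(\GG)$,
$$
\int_E L(\nabla u_n(x))\,dx\leq\beta|E|+\beta\int_E\mathring{G}(\nabla u(x))\,dx+\beta\|\mathring{G}(\nabla u_n)-\mathring{G}(\nabla u)\|_{L^1(\Omega)}
$$
for every Borel set $E\subset\Omega$, and the right-hand side is small uniformly in $n$ when $|E|$ is small, because $\mathring{G}(\nabla u)\in L^1(\Omega)$ and $\|\mathring{G}(\nabla u_n)-\mathring{G}(\nabla u)\|_{L^1}\to 0$. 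Combined with the pointwise convergence $L(\nabla u_n(x))\to L(\nabla u(x))$ a.e.\ — which holds by the continuity of $L$ on ${\rm int}(\GG)$, since $\nabla u_n(x)\to\nabla u(x)$ and all these matrices lie in ${\rm int}(\GG)$ — Vitali's theorem gives $\int_\Omega L(\nabla u_n)\,dx\to\int_\Omega L(\nabla u)\,dx$, which is the desired (in fact stronger, with equality) conclusion. No diagonalization is needed here, in contrast with Proposition \ref{Approx-Prop-CPAF-1}.

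The only genuine point to check is that Lemma \ref{Approx-Lemma-CPAF} really does deliver the a.e.\ convergence $\nabla u_n\to\nabla u$ in addition to the two displayed convergences; inspecting its proof, the $W^{1,p}$-convergence of $u_n$ to $u$ yields a subsequence with $\nabla u_n\to\nabla u$ a.e., so passing to that subsequence is harmless (and the limit statement, being an equality, is unaffected by subsequence extraction). Thus the main — and essentially only — work is bookkeeping: verifying that the hypotheses of Lemma \ref{Approx-Lemma-CPAF} are met with $\Psi=\mathring{G}$, which is immediate from the $G$-convex growth of $L$ and finiteness of the energy, and then running the Vitali argument. I expect no real obstacle beyond this routine verification, since every ingredient has already been isolated in the proof of Proposition \ref{Approx-Prop-CPAF-1}.
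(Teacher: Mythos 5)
Your proposal is correct and follows exactly the route the paper intends: the paper itself omits the proof, remarking only that it uses ``similar arguments as in the proof of Proposition \ref{Approx-Prop-CPAF-1},'' and your argument is precisely that proof stripped of the dilation parameter $t$ and the ru-usc step (both unnecessary here since $\nabla u\in{\rm int}(\GG)$ a.e.\ and $L$ is assumed continuous on ${\rm int}(\GG)$), with the Vitali step now yielding a genuine limit rather than a $\limsup$. Your remark that the a.e.\ convergence of $\nabla u_n$ requires a harmless subsequence extraction from the $W^{1,p}$-convergence supplied by Lemma \ref{Approx-Lemma-CPAF} is a correct and appropriately careful observation.
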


\subsection{Approximation of the relaxation formula} Given a Borel measurable function $L:\MM^{m\times d}\to[0,\infty]$ we consider $\Z L:\MM^{m\times d}\to [0,\infty]$ defined by
$$ 
\Z L(\xi):=\inf\left\{\int_Y L(\xi+\nabla\phi(y))dy:\phi\in\Aff_0(Y;\RR^m)\right\}
$$
with $Y:=]0,1[^d$ and $\Aff_0(Y;\RR^m):=\big\{\phi\in\Aff(Y;\RR^m):\phi=0\hbox{ on }\partial Y\big\}$ where $\Aff(Y;\RR^m)$ is the space of continuous piecewise affine functions from $Y$ to $\RR^m$. The following proposition is adapted from \cite[Lemma 3.1]{oah-jpm08a} (see also \cite{oah-jpm07}).

\begin{proposition}\label{Prop1-for-Gamma-limsup}
Given $\xi\in\MM^{m\times d}$ and a bounded open set $A\subset\RR^d$ there exists $\{\phi_k\}_k\subset\Aff_0(A;\RR^m)$ such that{\rm:}
\begin{itemize}
\item[\SMALL$\blacklozenge$] $\displaystyle\lim_{k\to\infty}\|\phi_k\|_{L^\infty(A;\RR^m)}=0;$
\item[\SMALL$\blacklozenge$] $\displaystyle\lim_{k\to\infty}\mint_A L(\xi+\nabla\phi_k(x))dx=\Z L(\xi)$.
\end{itemize}
\end{proposition}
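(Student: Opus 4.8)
The plan is to reduce the statement to the definition of $\Z L(\xi)$ by a scaling (blow-up/tiling) argument. By definition of $\Z L(\xi)$ there exists a sequence $\{\psi_j\}_j\subset\Aff_0(Y;\RR^m)$ with $\int_Y L(\xi+\nabla\psi_j(y))dy\to\Z L(\xi)$; if $\Z L(\xi)=\infty$ the conclusion is trivial (take $\phi_k=0$), so assume $\Z L(\xi)<\infty$, which forces $\xi\in\GG$ and hence $\int_Y L(\xi+\nabla\psi_j)<\infty$ for $j$ large. First I would fix such a $\psi_j$ and subdivide the bounded open set $A$ into finitely many disjoint open cubes $\{Q_i^\delta\}_{i}$ of side $\delta$ (plus a remainder of arbitrarily small measure near $\partial A$), and on each cube $Q_i^\delta=x_i+\delta Y$ define $\phi^{\delta}(x):=\delta\,\psi_j\big((x-x_i)/\delta\big)$, extended by $0$ on the remainder. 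Then $\phi^\delta\in\Aff_0(A;\RR^m)$, $\|\phi^\delta\|_{L^\infty(A;\RR^m)}\le\delta\|\psi_j\|_{L^\infty(Y;\RR^m)}\to 0$ as $\delta\to0$, and by the change of variables $y=(x-x_i)/\delta$ together with the periodic-tiling computation,
$$
\mint_A L(\xi+\nabla\phi^\delta(x))dx=\frac{|A'_\delta|}{|A|}\,\mint_Y L(\xi+\nabla\psi_j(y))dy+\frac{|A\setminus A'_\delta|}{|A|}L(\xi),
$$
where $A'_\delta$ is the union of the full cubes; since $|A\setminus A'_\delta|\to0$ and $L(\xi)<\infty$, letting $\delta\to0$ gives $\mint_A L(\xi+\nabla\phi^\delta)\to\mint_Y L(\xi+\nabla\psi_j)$.

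Next I would combine this with $j\to\infty$: the two limits $\delta\to0$ and $j\to\infty$ can be interchanged by a standard diagonalization, producing $\{\phi_k\}_k\subset\Aff_0(A;\RR^m)$ with $\|\phi_k\|_{L^\infty(A;\RR^m)}\to0$ and $\mint_A L(\xi+\nabla\phi_k)\to\Z L(\xi)$. This is essentially the argument of \cite[Lemma 3.1]{oah-jpm08a}; the only point of care is that the construction genuinely stays inside $\Aff_0(A;\RR^m)$ — it does, because each $\phi^\delta$ is continuous piecewise affine (a rescaled copy of $\psi_j$ on each cube, glued by $0$ on the complement, and $\psi_j$ vanishes on $\partial Y$ so there is no jump across cube boundaries), and it vanishes on $\partial A$.

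The main obstacle is bookkeeping the boundary layer: $A$ is an arbitrary bounded open set, so the cubes of side $\delta$ contained in $A$ exhaust $A$ only in measure, and one must check that the contribution of the remainder $A\setminus A'_\delta$ — where we have simply put $\phi^\delta=0$, giving integrand $L(\xi)$ — is negligible in the normalized integral. Since $L(\xi)<\infty$ (as $\xi\in\GG$) and $|A\setminus A'_\delta|/|A|\to0$, this term vanishes in the limit, so the obstacle is routine rather than serious; the rest is a change of variables and diagonalization.
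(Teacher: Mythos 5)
Your strategy—rescale a near-optimal $\psi_j\in\Aff_0(Y;\RR^m)$ onto small cubes tiling $A$, glue by zero, and diagonalize—is the same underlying idea as the paper's proof, which also sets $\phi_{n,k}(x):=\alpha_i\phi_n\bigl((x-a_i)/\alpha_i\bigr)$ on cubes $a_i+\alpha_i Y$. The difference is in the covering: you use a finite uniform grid of side $\delta$, which leaves a boundary layer $A\setminus A'_\delta$ where you set $\phi^\delta=0$, whereas the paper invokes Vitali's covering theorem to produce a disjoint family of cubes of varying sizes (all with side $<1/k$) that exhaust $A$ up to a null set, yielding the \emph{exact} identity $\mint_A L(\xi+\nabla\phi_{n,k})\,dx=\int_Y L(\xi+\nabla\phi_n)\,dy$ with no remainder and no extra limit in $\delta$.

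This difference is not cosmetic in the stated generality, and it hides a genuine gap in your argument. Your remainder term is $\frac{|A\setminus A'_\delta|}{|A|}L(\xi)$, and you dispose of it by asserting that $\Z L(\xi)<\infty$ ``forces $\xi\in\GG$'' and hence $L(\xi)<\infty$. But Proposition \ref{Prop1-for-Gamma-limsup} is stated for an \emph{arbitrary} Borel measurable $L:\MM^{m\times d}\to[0,\infty]$; no set $\GG$ and no $G$-convex growth are hypothesized in this subsection, so $\GG$ is undefined here. Moreover the implication $\Z L(\xi)<\infty\Rightarrow L(\xi)<\infty$ is simply false for general Borel $L$: one can have $L(\xi)=\infty$ while $\Z L(\xi)<\infty$ by choosing a competitor $\phi\in\Aff_0(Y;\RR^m)$ with $\nabla\phi\neq 0$ a.e.\ that steers $\xi+\nabla\phi$ away from the singular value. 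In that case your remainder term is $\infty\cdot(|A\setminus A'_\delta|/|A|)=\infty$ for every $\delta$, and the argument collapses. The Vitali covering route sidesteps this entirely because there is no remainder. (In the paper's actual application $L=\mathcal{H}W$ with $G$-convex growth, so $\mathrm{dom}\,\Z L=\mathrm{dom}\,L=\GG$ and your step would be valid—but the proposition as written is more general than that use case, so the proof should not rely on it.) To repair your argument while keeping a finite uniform grid, one can instead define $\phi^\delta$ to vanish only on the remainder when $L(\xi)<\infty$ and otherwise exhaust $A$ via Vitali's theorem and truncate to a finite subfamily whose complement has measure zero in the limit; or simply follow the paper's Vitali decomposition directly.
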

\begin{proof}
Given $\xi\in\MM^{m\times d}$ there exists $\{\phi_n\}_n\subset\Aff_0(Y;\RR^m)$ such that
\begin{equation}\label{Prop-ZF-Eq1}
\lim_{n\to\infty}\int_Y L(\xi+\nabla\phi_n(y))dy=\Z L(\xi).
\end{equation}
Fix any $n\geq 1$ and $k\geq 1$. By Vitali's covering theorem there exists a finite or countable family $\{a_{i}+\alpha_{i}Y\}_{i\in I}$ of disjoint subsets of $A$, where $a_{i}\in\RR^d$ and $0<\alpha_{i}<{1\over k}$, such that
$
|A\setminus\cup_{i\in I}(a_{i}+\alpha_{i}Y)|=0
$
(and so $\sum_{i\in I}\alpha_{i}^d=|A|$). Define $\phi_{n,k}\in \Aff_0(A;\RR^m)$ by 
$$
\phi_{n,k}(x):=
\alpha_{i}\phi_{n}\left({x-a_{i}\over \alpha_{i}}\right)\hbox{ if }x\in a_{i}+\alpha_{i}Y.
$$
Clearly $\|\phi_{n,k}\|_{L^\infty(A;\RR^m)}\leq {1\over k}\|\phi_n\|_{L^\infty(Y;\RR^m)}$, hence $\lim_{k\to\infty}\|\phi_{n,k}\|_{L^\infty(A;\RR^m)}=0$ for all $k\geq 1$, and consequently
\begin{equation}\label{Prop-ZF-Eq2}
\lim_{n\to\infty}\lim_{k\to\infty}\|\phi_{n,k}\|_{L^\infty(A;\RR^m)}=0.
\end{equation}
On the other hand, we have
$$
\int_A L(\xi+\nabla\phi_{n,k}(x))dx=\sum_{i\in I}\alpha_i^d\int_Y L(\xi+\nabla\phi_n(y))dy=|A|\int_YL(\xi+\nabla\phi_n(y))dy
$$
for all $n\geq 1$ and all $k\geq 1$. Using \eqref{Prop-ZF-Eq1} we deduce that  
\begin{equation}\label{Prop-ZF-Eq3}
\lim_{n\to\infty}\lim_{k\to\infty}\mint_A L(\xi+\nabla\phi_{n,k}(x))dx=\Z L(\xi),
\end{equation}
and the result follows from \eqref{Prop-ZF-Eq2} and \eqref{Prop-ZF-Eq3} by diagonalization.
\end{proof}

\subsection{Approximation of the homogenization formula} Given a Borel measurable function $L:\RR^d\times\MM^{m\times d}\to[0,\infty]$ which is $1$-periodic with respect to its first variable and for which there exists a Borel measurable function $G:\MM^{m\times d}\to[0,\infty]$ and $\beta>0$ such that
\begin{equation}\label{Approx-of-H-F-G-Growth}
L(x,\xi)\leq\beta(1+G(\xi))
\end{equation}
for all $\xi\in\MM^{m\times d}$, we consider $\mathcal{H}L:\MM^{m\times d}\to[0,\infty]$ defined by
$$
\mathcal{H}L(\xi):=\inf_{k\geq 1}\inf\left\{\mint_{kY}L(x,\xi+\nabla\phi(x))dx:\phi\in W^{1,p}_0(kY;\RR^m)\right\}.
$$
The following proposition is adapted from \cite[Lemma 2.1(a)]{muller87}.
\begin{proposition}\label{Prop2-for-Gamma-limsup}
Given $\xi\in\GG$, where $\GG$ denotes the effective domain of $G$, and a bounded open set $A\subset\RR^d$ there exists $\{\phi_\eps\}_\eps\subset W^{1,p}_0(A;\RR^m)$ such that{\rm:}
\begin{itemize}
\item[\SMALL$\blacklozenge$] $\displaystyle\lim_{\eps\to0}\|\phi_\eps\|_{L^p(A;\RR^m)}=0;$
\item[\SMALL$\blacklozenge$] $\displaystyle\lim_{\eps\to0}\mint_A L\left({x\over\eps},\xi+\nabla\phi_\eps(x)\right)dx=\mathcal{H} L(\xi)$.
\end{itemize}
\end{proposition}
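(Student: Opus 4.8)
The natural first move is to invoke Corollary~\ref{SubadditiveTheorem}: for a cube $Q$, after the change of variables $y=x/\eps$ one has $\inf\{\mint_Q L(x/\eps,\xi+\nabla\phi(x))dx:\phi\in W^{1,p}_0(Q;\RR^m)\}=\mathcal{S}_\xi(\tfrac1\eps Q)/|\tfrac1\eps Q|\to\inf_k\mathcal{S}_\xi(kY)/k^d=\mathcal{H}L(\xi)$, so a near-minimizer $\psi_\eps\in W^{1,p}_0(\tfrac1\eps Q;\RR^m)$, rescaled to $\phi_\eps(x):=\eps\,\psi_\eps(x/\eps)$, already lies in $W^{1,p}_0(Q;\RR^m)$ and yields the energy convergence. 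The snag is the $L^p$-smallness: without coercivity on $L$, $\psi_\eps$ need not be bounded in $W^{1,p}(\tfrac1\eps Q;\RR^m)$, and then there is no control on $\|\phi_\eps\|_{L^p(Q;\RR^m)}=\eps^{1+d/p}\|\psi_\eps\|_{L^p(\frac1\eps Q;\RR^m)}$. The plan is therefore to avoid solving an $\eps$-dependent cell problem and to build $\phi_\eps$ directly from one fixed, near-optimal corrector on a periodicity cell, tiled and truncated; since its amplitude does not grow with $\eps$, the prefactor $\eps$ forces the $L^p$-norm to zero.

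Concretely, since $\xi\in\GG$ one has $\mathcal{H}L(\xi)\leq\beta(1+G(\xi))<\infty$; fix $\delta>0$ and pick $k=k_\delta\geq1$ and $\phi=\phi_\delta\in W^{1,p}_0(kY;\RR^m)$ with $\mint_{kY}L(x,\xi+\nabla\phi(x))dx\leq\mathcal{H}L(\xi)+\delta$, then extend $\phi$ to $\RR^d$ by $k\ZZ^d$-periodicity (still denoted $\phi$): as $\phi$ has zero trace on $\partial(kY)$ this extension belongs to $W^{1,p}_{\rm loc}(\RR^d)$, vanishes on the grid $k\ZZ^d+\partial(kY)$, and $\int_{kY}|\phi|^p<\infty$. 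Given the bounded open set $A$, let $\mathcal{C}_\eps$ be the collection of cells of the $\eps k\ZZ^d$-grid whose closure is contained in $A$, and set $\phi^\delta_\eps:=\eps\,\phi(\cdot/\eps)$ on $\cup\,\mathcal{C}_\eps$ and $\phi^\delta_\eps:=0$ on $A\setminus\cup\,\mathcal{C}_\eps$. Because $\phi(\cdot/\eps)$ vanishes on the $\eps k\ZZ^d$-grid and $A\setminus\cup\,\mathcal{C}_\eps$ contains a neighbourhood (in $A$) of $\partial A$, we get $\phi^\delta_\eps\in W^{1,p}_0(A;\RR^m)$. Using the $1$-periodicity of $L$ in its first variable, the $k\ZZ^d$-periodicity of $\phi$, and the change of variables $y=x/\eps$ on each cell of $\mathcal{C}_\eps$,
$$
\int_{\cup\mathcal{C}_\eps}L\!\left(x/\eps,\xi+\nabla\phi^\delta_\eps(x)\right)dx=\big({\rm card}\,\mathcal{C}_\eps\big)\,\eps^d\!\int_{kY}L(y,\xi+\nabla\phi(y))\,dy,\qquad \|\phi^\delta_\eps\|_{L^p(A;\RR^m)}^p\leq\eps^p\,|A|\,k^{-d}\!\int_{kY}|\phi|^p,
$$
while on $A\setminus\cup\,\mathcal{C}_\eps$, where $\nabla\phi^\delta_\eps=0$, one has $L(x/\eps,\xi)\leq\beta(1+G(\xi))$ by \eqref{Approx-of-H-F-G-Growth}. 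Since $({\rm card}\,\mathcal{C}_\eps)(\eps k)^d=|\cup\,\mathcal{C}_\eps|\to|A|$ and $|A\setminus\cup\,\mathcal{C}_\eps|\to0$ as $\eps\to0$, letting $\eps\to0$ gives $\|\phi^\delta_\eps\|_{L^p(A;\RR^m)}\to0$ and
$$
\lim_{\eps\to0}\mint_A L\!\left(x/\eps,\xi+\nabla\phi^\delta_\eps(x)\right)dx=\mint_{kY}L(y,\xi+\nabla\phi(y))\,dy\in\big[\mathcal{H}L(\xi),\ \mathcal{H}L(\xi)+\delta\big].
$$
(The lower endpoint also re-confirms via $\mint_A L(x/\eps,\xi+\nabla\phi^\delta_\eps)\,dx\geq \mathcal{S}^{\phantom{\delta}}_\xi$-type comparison and Corollary~\ref{SubadditiveTheorem}, but here it is immediate from $\mint_{kY}L(\cdot,\xi+\nabla\phi)\geq\mathcal{H}L(\xi)$.) A diagonal extraction as $\delta\to0$ then produces $\{\phi_\eps\}_\eps\subset W^{1,p}_0(A;\RR^m)$ with $\|\phi_\eps\|_{L^p(A;\RR^m)}\to0$ and $\mint_A L(x/\eps,\xi+\nabla\phi_\eps(x))dx\to\mathcal{H}L(\xi)$, which is the assertion.

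\emph{Main obstacle.} All the identities above are elementary bookkeeping with periodic tilings; the single real difficulty is the one flagged at the outset, namely the $L^p$-convergence $\phi_\eps\to0$. In the classical polynomial-growth case it comes for free (coercivity bounds the correctors in $W^{1,p}$ and compact embedding gives strong $L^p$-convergence), but that boundedness is lost without coercivity on $L$, and the whole point of recycling one fixed corrector $\phi$ instead of an $\eps$-dependent near-minimizer is to recover it: then $\|\phi_\eps\|_{L^p(A;\RR^m)}^p=\eps^p\cdot O\!\big(({\rm card}\,\mathcal{C}_\eps)\,\eps^d\big)=O(\eps^p)$. The accompanying but routine point is that one must keep $\phi_\eps\equiv0$ on the boundary layer $A\setminus\cup\,\mathcal{C}_\eps$ in order to land in $W^{1,p}_0(A;\RR^m)$; this costs nothing since the layer has vanishing measure and $L(x/\eps,\xi)\leq\beta(1+G(\xi))$ there, while the inner corrector already matches the zero datum on every cell face.
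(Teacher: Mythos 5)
Your proof is correct and takes essentially the same route as the paper: fix a near-optimal corrector $\hat\phi_n\in W^{1,p}_0(k_nY;\RR^m)$, extend it $k_nY$-periodically, rescale by $\eps$ and tile the cells of the $\eps k_n\ZZ^d$-grid contained in $A$ (setting the test function to zero on the residual boundary layer, where the growth bound \eqref{Approx-of-H-F-G-Growth} is used), and diagonalize. The paper indexes the near-minimizer by a sequence $n$ rather than by a tolerance $\delta$, but that is purely cosmetic, and your observation that the $\eps$-prefactor restores $L^p$-smallness without coercivity is exactly what the paper's bound $\|\phi_{n,\eps}\|^p_{L^p(A;\RR^m)}\leq\eps^p|A|k_n^{-d}\|\hat\phi_n\|^p_{L^p(k_nY;\RR^m)}$ encodes.
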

\begin{proof}
Given $\xi\in\GG$ there exists $\{k_n;\hat\phi_n\}_n$ such that:
\begin{eqnarray}
&&\hat\phi_n\in W^{1,p}_0(k_nY;\RR^m)\hbox{ for all }n\geq 1;\nonumber\\
&&\lim_{n\to\infty}\mint_{k_nY} L(x,\xi+\nabla\hat\phi_n(x))dx=\mathcal{H}L(\xi).\label{Approx-H-F-Eq0}
\end{eqnarray}
For each $n\geq 1$ and $\eps>0$, denote the $k_nY$-periodic extension of $\hat\phi_n$ by $\phi_n$, consider $A_{n,\eps}\subset A$ given by
$$
A_{n,\eps}:=\cupp_{z\in I_{n,\eps}}\eps(z+k_n Y)
$$ 
with $I_{n,\eps}:=\big\{z\in\ZZ^d:\eps(z+k_nY)\subset A\big\}$, where ${\rm card}(I_{n,\eps})<\infty$ because $A$ is bounded, and define $\phi_{n,\eps}\in W^{1,p}_0(A;\RR^m)$ by
$$
\phi_{n,\eps}(x):=\eps\phi_n\left({x\over\eps}\right)\hbox{ if }x\in A_{n,\eps}.
$$
Fix any $n\geq 1$. It is easy to see that 
\begin{eqnarray*}
\|\phi_{n,\eps}\|^p_{L^p(A;\RR^m)}&=&\int_{A_{n,\eps}}|\phi_{n,\eps}(x)|^pdx\\
&=&\eps^p\sum_{z\in I_{n,\eps}}\int_{\eps(z+k_nY)}\left|\phi_n\left({x\over\eps}\right)\right|^pdx\\
&\leq&\eps^{p}{|A|\over k_n^d}\|\hat\phi_n\|^ p_{L^p(k_nY;\RR^m)}
\end{eqnarray*}
for all $\eps>0$, and consequently
$
\lim_{\eps\to0}\|\phi_{n,\eps}\|_{L^p(A;\RR^m)}=0
$
for all $n\geq 1$. It follows that
\begin{equation}\label{HAF-Eq1}
\lim_{n\to\infty}\lim_{\eps\to0}\|\phi_{n,\eps}\|_{L^p(A;\RR^m)}=0.
\end{equation}
On the other hand, for every $n\geq 1$ and every $\eps>0$, we have 
$$
\int_A L\left({x\over\eps},\xi+\nabla\phi_{n,\eps}(x)\right)dx=\int_{A_{n,\eps}}L\left({x\over\eps},\xi+\nabla\phi_{n,\eps}(x)\right)dx+\int_{A\setminus A_{n,\eps}}L\left({x\over\eps},\xi\right)dx.
$$
But 
\begin{eqnarray*}
\int_{A_{n,\eps}}L\left({x\over\eps},\xi+\nabla\phi_{n,\eps}(x)\right)dx&=&\sum_{z\in I_{n,\eps}}\int_{\eps(z+k_n Y)}L\left({x\over\eps},\xi+\nabla\phi_{n}\left({x\over\eps}\right)\right)dx\\
&=&|A_{n,\eps}|\mint_{k_n Y}L(x,\xi+\nabla\hat\phi_{n}(x))dx,
\end{eqnarray*}
and consequently   
\begin{eqnarray*}
&&|A_{n,\eps}|\mathcal{H}L(\xi)\leq\int_A L\left({x\over\eps},\xi+\nabla\phi_{n,\eps}(x)\right)dx\leq|A|\mint_{k_nY}L(x,\xi+\nabla\hat\phi_n(x))dx\\
&&\hskip69mm+\beta|A\setminus A_{n,\eps}|(1+G(\xi))
\end{eqnarray*}
by \eqref{Approx-of-H-F-G-Growth}. As $\lim_{\eps\to0}|A\setminus A_{n,\eps}|=0$ for any $n\geq 1$, $G(\xi)<\infty$ and using \eqref{Approx-H-F-Eq0} we see that:
\begin{itemize}
\item[\SMALL$\blacklozenge$] $\displaystyle \lim_{\eps\to0}|A\setminus A_{n,\eps}|\mathcal{H}L(\xi)=0$;
\item[\SMALL$\blacklozenge$] $\displaystyle\lim_{n\to \infty}\lim_{\eps\to0}\left(\mint_{k_nY} L\left({x},\xi+\nabla\hat\phi_{n}(x)\right)-\mathcal{H}L(\xi)dx+\frac{|A\setminus A_{n,\eps}|}{|A|}(1+G(\xi))\right)$=0.
\end{itemize}

Hence
\begin{equation}\label{HAF-Eq2}
\lim_{n\to\infty}\limsup_{\eps\to0}\left|\mint_A L\left({x\over\eps},\xi+\nabla\phi_{n,\eps}(x)\right)dx-\mathcal{H}L(\xi)\right|=0,
\end{equation}
and the result follows from \eqref{HAF-Eq1} and \eqref{HAF-Eq2} by diagonalization.
\end{proof}

\section{Proof of Theorem \ref{general-homogenization-theorem}}

In this section we prove Theorem \ref{general-homogenization-theorem}.

\subsection{Proof of Theorem \ref{general-homogenization-theorem}(i)} 
Let $u\in W^{1,p}(\Omega;\RR^m)$ and let $\{u_\eps\}_\eps\subset W^{1,p}(\Omega;\RR^m)$ be such that $\|u_\eps-u\|_{L^p(\Omega;\RR^m)}\to 0$. We have to prove that
\begin{equation}\label{GHT-eq1}
\Gamma\hbox{-}\liminf_{\eps\to 0}I_\eps(u_\eps)\geq \widehat{\mathcal{H}I}(u).
\end{equation}
Without loss of generality we can assume that 
\begin{equation}\label{LoWeRBoUND-HomoGENIzaTiON-EqUa1}
\liminf_{\eps\to 0}I_\eps(u_\eps)=\lim_{\eps\to 0}I_\eps(u_\eps)<\infty, \hbox{ and so }\sup_{\eps}I_\eps(u_\eps)<\infty. 
\end{equation}
Then
\begin{equation}\label{Def-I-epseq1}
\nabla u_\eps(x)\in\GG\hbox{ for all }\eps>0\hbox{ and a.a. }x\in\Omega
\end{equation}
and, up to a subsequence,
\begin{equation}\label{weak-W1p-subsequence}
u_\eps\wto u\hbox{ in }W^{1,p}(\Omega;\RR^m)
\end{equation}
since $W$ is $p$-coercive. As $\GG$ is convex, from \eqref{Def-I-epseq1} and \eqref{weak-W1p-subsequence} it follows that
\begin{equation}\label{Def-I-epseq2}
\nabla u(x)\in\overline{\GG}\hbox{ for a.a. }x\in\Omega.
\end{equation}
As $p>d$, $u$ is differentiable for a.a. $x\in\Omega$ and \eqref{weak-W1p-subsequence} implies that, up to a subsequence,
\begin{equation}\label{Embedding-EquA}
\|u_\eps-u\|_{L^\infty(\Omega;\RR^m)}\to 0.
\end{equation}
\subsection*{Step 1: localization} For each $\eps>0$, we define the (positive) Radon measure $\mu_\eps$ on $\Omega$ by 
$$
\mu_\eps:=W\left({\cdot\over\eps},\nabla u_\eps(\cdot)\right)dx.
$$
From (\ref{LoWeRBoUND-HomoGENIzaTiON-EqUa1}) we see that $\sup_\eps\mu_\eps(\Omega)<\infty$, and so  there exists a (positive) Radon measure $\mu$ on $\Omega$ such that (up to a subsequence) $\mu_\eps\mwto\mu$, i.e., 
$$
\lim\limits_{\eps\to 0}\int_\Omega\phi d\mu_\eps=\int_\Omega\phi d\mu\hbox{ for all } \phi\in C_{\rm c}(\Omega),
$$
or, equivalently, the following two equivalent conditions holds:
\begin{itemize}
\item[(a)] $\left\{\begin{array}{l}
\liminf\limits_{\eps\to 0}\mu_\eps(U)\geq\mu(U)\hbox{ for all open sets } U\subset\Omega\\
\limsup\limits_{\eps\to 0}\mu_\eps(K)\leq\mu(K)\hbox{ for all compact sets } K\subset\Omega
\end{array}\right.$;
\item[(b)]  $\lim\limits_{\eps\to 0}\mu_\eps(B)=\mu(B)$ for all bounded Borel sets $B\subset\Omega$ with $\mu(\partial B)=0$.
\end{itemize}
By Lebesgue's decomposition theorem, we have $\mu=\mu_a+\mu_s$ where $\mu_a$ and $\mu_s$ are (positive) Radon measures such that $\mu_a<<dx$ and $\mu_s\perp dx$, and from Radon-Nikodym's theorem we deduce that there exists $f\in L^1(\Omega;[0,\infty[)$, given by
\begin{equation}\label{RaDoN-NiKOdYm-ForMULA-HomogeniZAtION-I}
f(x)=\lim_{\rho\to 0}{\mu_a(Q_\rho(x))\over\rho^d}=\lim_{\rho\to 0}{\mu(Q_\rho(x))\over\rho^d}\hbox{ for a.a. }x\in\Omega 
\end{equation}
with $Q_\rho(x):=x+\rho Y$, such that
$$
\mu_a(A)=\int_A fdx\hbox{ for all measurable sets }A\subset\Omega.
$$
\begin{remark}\label{SptMu-s-Remark}
The support of $\mu_s$, $\spt(\mu_s)$, is the smallest closed subset $F$ of $\Omega$ such that $\mu_s(\Omega\setminus F)=0$. Hence, $\Omega\setminus\spt(\mu_s)$ is an open set, and so, given any $x\in\Omega\setminus\spt(\mu_s)$, there exists $\hat{\rho}>0$ such that $\overline{Q}_{\hat{\rho}}(x)\subset\Omega\setminus\spt(\mu_s)$ with $\overline{Q}_{\hat{\rho}}(x):=x+\hat{\rho}\overline{Y}$. Thus, for a.e. $x\in\Omega$, $\mu(Q_\rho(x))=\mu_a(Q_{\rho}(x))$ for all $\rho>0$ sufficiently small.
\end{remark}
To prove (\ref{GHT-eq1}) it suffices to show that
\begin{equation}\label{LoWeRBoUND-HomoGENIzaTiON-EqUa2}
f(x)\geq \widehat{\mathcal{H} W}(\nabla u(x))\hbox{ for a.a. }x\in\Omega.
\end{equation}
Indeed, from (a) we see that 
$$
\liminf_{\eps\to 0}I_\eps(u_\eps)=\liminf\limits_{\eps\to 0}\mu_\eps(\Omega)\geq\mu(\Omega)=\mu_a(\Omega)+\mu_s(\Omega)\geq\mu_a(\Omega)=\int_\Omega f(x)dx.
$$
But, by (\ref{LoWeRBoUND-HomoGENIzaTiON-EqUa2}), we have 
$$
\int_\Omega f(x)dx\geq\int_\Omega\widehat{\mathcal{H}W}(\nabla u(x))dx,
$$
and (\ref{GHT-eq1}) follows.

Fix $x_0\in\Omega\setminus N$, where $N\subset\Omega$ is a suitable set such that $|N|=0$, and prove that $f(x_0)\geq\widehat{\mathcal{H}W}(\nabla u(x_0))$. As $\mu(\Omega)<\infty$ we have $\mu(\partial \Q)=0$ for all $\rho\in]0,1]\setminus D$ where $D$ is a countable set. From (b) and (\ref{RaDoN-NiKOdYm-ForMULA-HomogeniZAtION-I}) we deduce that
$$
f(x_0)=\lim_{\rho\to 0}{\mu(\Q)\over\rho^d}=\lim_{\rho\to 0}\lim_{\eps\to0}{\mu_\eps(\Q)\over\rho^d},
$$
and so we are reduced to show that
\begin{equation}\label{GHT-eq1-bis}
\lim_{\rho\to 0}\lim_{\eps\to0}\mint_{\Q}W\left({x\over\eps},\nabla u_\eps(x)\right)dx\geq \widehat{\mathcal{H}W}(\nabla u(x_0)).
\end{equation}

On the other hand, as $\GG$ is convex and $0\in{\rm int}(\GG)$, from \eqref{Def-I-epseq1} it follows that
$$
t\nabla u_\eps(x)\in \GG\hbox{ for all }\eps>0\hbox{ and a.a. }x\in\Omega,
$$
and so, given any $t\in]0,1[$, we can assert that for every $\eps>0$ and every $\rho>0$, 
\begin{eqnarray*}
\mint_{\Q}W\left({x\over\eps},t\nabla u_\eps(x)\right)dx&=&\left(1+\Delta^a_W(t)\right)\mint_{\Q}W\left({x\over\eps},\nabla u_\eps(x)\right)dx\\
&&+\Delta^a_W(t)\mint_{\Q} a\left({x\over\eps}\right)dx
\end{eqnarray*}
with $\Delta_W^a(t)$ given by \eqref{DeF-of-DelTa-W}. Using the periodicity of $a$ we obtain
\begin{eqnarray*}
\lim_{\rho\to 0}\lim_{\eps\to 0}\mint_{\Q}W\left({x\over\eps},t\nabla u_\eps\right)dx&=&\left(1+\Delta^a_W(t)\right)\lim_{\rho\to 0}\lim_{\eps\to 0}\mint_{\Q}W\left({x\over\eps},\nabla u_\eps\right)dx\\
&&+\Delta^a_W(t)\int_{Y} a\left(y\right)dy.
\end{eqnarray*}
As $\limsup_{t\to 1}\Delta^a_W(t)\leq 0$ and $\int_Ya(y)dy\geq 0$ it follows that
$$
\limsup_{t\to 1}\lim_{\rho\to 0}\lim_{\eps\to 0}\mint_{\Q}W\left({x\over\eps},t\nabla u_\eps(x)\right)dx\leq \lim_{\rho\to 0}\lim_{\eps\to 0}\mint_{\Q}W\left({x\over\eps},\nabla u_\eps(x)\right)dx.
$$
Consequently, to prove \eqref{GHT-eq1-bis} it is sufficient to show that
\begin{equation}\label{GHT-eq1-bis-bis}
\limsup_{t\to 1}\lim_{\rho\to 0}\lim_{\eps\to 0}\mint_{\Q}W\left({x\over\eps},t\nabla u_\eps(x)\right)dx\geq \widehat{\mathcal{H}W}(\nabla u(x_0)).
\end{equation}

\subsection*{Step 2: cut-off method} Fix any $t,\delta\in]0,1[$. Let $\phi\in C^\infty_0(\Q;[0,1])$ be a cut-off function between ${Q_{\rho\delta}(x_0)}$ and $\Q$ such that $\|\nabla\phi\|_{L^\infty(\Q)}\leq{2\over \rho(1-\delta)}$. Setting
$$
v_\eps:=\phi u_\eps+(1-\phi)l_{\nabla u(x_0)},
$$                         
where $l_{\nabla u(x_0)}(x):=u(x_0)+\nabla u(x_0)\cdot (x-x_0)$, we have
$$
\nabla v_\eps:=\left\{
\begin{array}{ll}
\nabla u_\eps&\hbox{on }Q_{\rho\delta}(x_0)\\
\phi\nabla u_\eps+(1-\phi)\nabla u(x_0)+\Psi_{\eps,\rho}&\hbox{on }S_\rho\\
l_{\nabla u(x_0)}&\hbox{on }\partial\Q,
\end{array}
\right.
$$
with $S_{\rho}:=\Q\setminus Q_{\rho\delta}(x_0)$ and $\Psi_{\eps,\rho}:=\nabla\phi\otimes\left(u_\eps-l_{\nabla u(x_0)}\right)$. Hence
\begin{equation}\label{cut-offEq1}
t\nabla v_\eps:=\left\{
\begin{array}{ll}
t\nabla u_\eps&\hbox{on }Q_{\rho\delta}(x_0)\\
t\left(\phi\nabla u_\eps+(1-\phi)\nabla u(x_0)\right)+(1-t)\left({t\over 1-t}\Psi_{\eps,\rho}\right)&\hbox{on }S_\rho\\
tl_{\nabla u(x_0)}&\hbox{on }\partial\Q,
\end{array}
\right.
\end{equation}
which, in particular, means that 
\begin{equation}\label{FoR-SubaDDiTivity-Argument}
tv_\eps-tl_{\nabla u(x_0)}\in W^{1,p}_0(\Q;\RR^m).
\end{equation}
Using the right inequality in \eqref{G-convex-growth} it follows that
\begin{eqnarray*}
\mint_{\Q} W\left({x\over\eps},t\nabla v_\eps\right)dx&\leq&\mint_{\Q} W\left({x\over\eps},t\nabla u_\eps\right)dx+{1\over\rho^d}\int_{S_\rho}W\left({x\over\eps},t\nabla v_\eps\right)dx\\
&\leq&\mint_{\Q} W\left({x\over\eps},t\nabla u_\eps\right)dx+\beta(1-\delta^d)\\
&&+{\beta\over\rho^d}\int_{S_\rho}G(t\nabla v_\eps)dx.
\end{eqnarray*}
On the other hand, taking \eqref{cut-offEq1} into account and using the convexity of $G$ and the left inequality in \eqref{G-convex-growth}, we have
\begin{eqnarray*}
G(t\nabla v_\eps)&\leq& G(\nabla u_\eps)+G(\nabla u(x_0))+(1-t)G\left({t\over 1-t}\Psi_{\eps,\rho}\right)\\
&\leq&{1\over\alpha}W\left({x\over\eps},\nabla u_\eps\right)+G(\nabla u(x_0))+(1-t)G\left({t\over 1-t}\Psi_{\eps,\rho}\right).
\end{eqnarray*}
Moreover, it is easy to see that
\begin{eqnarray*}
\left\|{t\over 1-t}\Psi_{\eps,\rho}\right\|_{L^\infty(\Q;\MM^{m\times d})}&\leq&{2t\over(1-t)(1-\delta)}{1\over\rho}\|u-l_{\nabla u(x_0)}\|_{L^\infty(\Q;\RR^m)}\\
&&+{2t\over\rho(1-t)(1-\delta)}\|u_\eps-u\|_{L^\infty(\Omega;\RR^m)}, 
\end{eqnarray*}
where
\begin{equation}\label{G-conVeX-lim1}
\lim_{\rho\to0}{2t\over(1-t)(1-\delta)}{1\over\rho}\|u-l_{\nabla u(x_0)}\|_{L^\infty(\Q;\RR^m)}=0
\end{equation}
by the differentiability of $u$ at $x_0$ which gives $\lim_{\rho\to 0}{1\over\rho}\|u-l_{\nabla u(x_0)}\|_{L^\infty(\Q;\RR^m)}=0$, and 
\begin{equation}\label{G-conVeX-lim2}
\lim_{\eps\to0}{2t\over\rho(1-t)(1-\delta)}\|u_\eps-u\|_{L^\infty(\Omega;\RR^m)}=0\hbox{ for all }\rho>0
\end{equation}
by \eqref{Embedding-EquA}, i.e., $\lim_{\eps\to0}\|u_\eps-u\|_{L^\infty(\Omega;\RR^m)}=0$. Since $G$ is convex and $0\in{\rm int}(\GG)$, $G$ is bounded at the neighbourhood of $0$, and so, in particular, 
$$
c:=\sup_{\xi\in B_\eta(0)}G(\xi)<\infty\hbox{ for some }\eta>0.
$$  
By \eqref{G-conVeX-lim1} there exists $\bar\rho>0$ such that ${2t\over(1-t)(1-\delta)}{1\over\bar\rho}\|u-l_{\nabla u(x_0)}\|_{L^\infty(Q_{\bar\rho}(x_0);\RR^m)}<{\eta\over 2}$ for all $0<\rho<\bar\rho$. Fix any $0<\rho<\bar\rho$. Taking \eqref{G-conVeX-lim2} into account we can assert that there exists $\eps_\rho>0$ such that  
$$
G\left({t\over 1-t}\Psi_{\eps,\rho}\right)\leq c\hbox{ for all }0<\eps<\eps_\rho.
$$ 
Thus, for every $0<\eps<\eps_\rho$,
\begin{eqnarray}\label{End-Step2-Equ}
\ \quad\mint_{\Q} W\left({x\over\eps},t\nabla v_\eps\right)dx
&\leq&\mint_{\Q} W\left({x\over\eps},t\nabla u_\eps\right)dx+{\beta\over\alpha}{1\over\rho^d}\mu_{\eps}(S_\rho)\\
&&+\beta(1-\delta^d)(1+G(\nabla u(x_0))\nonumber\\
&&+c(1-t)\nonumber.
\end{eqnarray}

\subsection*{Step 3: passing to the limit} Taking \eqref{FoR-SubaDDiTivity-Argument} into account we see that for every $0<\eps<\eps_\rho$,
$$
\mint_{\Q} W\left({x\over\eps},t\nabla v_\eps\right)dx\geq{1\over|\Q|} \mathcal{S}_{t\nabla u(x_0)}\left({1\over\eps}\Q\right),
$$
where, for any $\xi\in\MM^{m\times d}$ and any open set $A\subset\RR^d$, $\mathcal{S}_\xi(A)$ is defined by \eqref{SuBaDDiTiVe-W}. By \eqref{Def-I-epseq2} we have $\nabla u(x_0)\in\overline{\GG}$, and so $t\nabla u(x_0)\in\GG$ because $\GG$ is convex and $0\in{\rm int}(\GG)$. From Corollary \ref{SubadditiveTheorem} we deduce that
\begin{equation}\label{PassingToTheLimit-1}
\limsup_{\eps\to 0}\mint_{\Q} W\left({x\over\eps},t\nabla v_\eps\right)dx\geq\mathcal{H}W(t   \nabla u(x_0))\hbox{ for all }0<\rho<\bar\rho.
\end{equation}
On the other hand, as $\mu_\eps(S_\rho)\leq\mu_\eps(\overline{S}_\rho)$ for all $0<\eps<\eps_\rho$, $\overline{S}_\rho$ is compact and $\mu_\eps\mwto\mu$ (see (a)), we have $\limsup_{\eps\to0}\mu_\eps(S_\rho)\leq\mu(\overline{S}_\rho)$. But $\mu(\overline{S}_\rho)=\mu_a(\overline{S}_\rho)$ since $\overline{S}_\rho\subset\overline{Q}_\rho(x_0)\subset\Omega\setminus\spt(\mu_s)$ (see Remark \ref{SptMu-s-Remark}), hence, for every $0<\rho<\bar\rho$,
$$
\limsup_{\eps\to0}{1\over\rho^d}\mu_\eps(S_\rho)\leq{1\over\rho^d}\int_{S_\rho}f(x)dx=\mint_{\Q}f(x)dx-\delta^d
\mint_{Q_{\rho\delta}(x_0)}f(x)dx,
$$
and consequently
\begin{equation}\label{PassingToTheLimit-2}
\limsup_{\rho\to0}\limsup_{\eps\to0}{\beta\over\alpha}{1\over\rho^d}\mu_\eps(S_\rho)\leq{\beta\over\alpha}(1-\delta^d)f(x_0).
\end{equation}
Taking \eqref{End-Step2-Equ} into account, from \eqref{PassingToTheLimit-1} and \eqref{PassingToTheLimit-2} we deduce that
$$
\lim_{\rho\to0}\lim_{\eps\to0}\mint_{\Q} W\left({x\over\eps},t\nabla u_\eps\right)dx\geq\mathcal{H}W(t\nabla u(x_0))+c(t-1)+c^\prime(\delta^p-1)
$$
with $c^\prime:=\beta+\beta G(\nabla u(x_0))+{\beta\over\alpha}f(x_0)$. Letting $t\to 1$ and $\delta\to 1$ we obtain
$$
\limsup_{t\to 1}\lim_{\rho\to0}\lim_{\eps\to0}\mint_{\Q} W\left({x\over\eps},t\nabla u_\eps\right)dx\geq\liminf_{t\to 1}\mathcal{H}W(t\nabla u(x_0)),
$$
and \eqref{GHT-eq1-bis-bis} follows. $\blacksquare$

\subsection{Proof of Theorem \ref{general-homogenization-theorem}(ii)} Let $u\in W^{1,p}(\Omega;\RR^m)$. We have to prove that there exists $\{u_\eps\}_\eps\subset W^{1,p}(\Omega;\RR^m)$ such that $\|u_\eps-u\|_{L^p(\Omega;\RR^m)}\to 0$ and 
$$
\limsup_{\eps\to0}I_\eps(u_\eps)\leq\widehat{\Z\mathcal{H}I}(u).
$$
Without loss of generality we can assume that $\widehat{\Z\mathcal{H}I}(u)<\infty$, and so 
\begin{equation}\label{MHT-EqUaTiOn-1}
\nabla u(x)\in\widehat{\Z\mathcal{H}\WW}\hbox{ for a.a. }x\in\Omega,
\end{equation}
 where $\widehat{\Z\mathcal{H}\WW}$ denotes the effective domain of $\widehat{\Z\mathcal{H}W}$. 
 
\subsection*{Step 1: characterization of \boldmath$\widehat{\Z\mathcal{H}W}$\unboldmath} As $W$ is periodically ru-usc, i.e., there exists a $1$-periodic function $a\in L^1_{\rm loc}(\RR^d;]0,\infty])$ such that 
$$
\limsup_{t\to1}\Delta^a_W(t)\leq 0,
$$ 
from Propositions \ref{Coro-Prop-ru-usc2} and \ref{Coro-Prop-ru-usc1} we see that $\Z\mathcal{H}W$ is ru-usc: precisely, we have 
$$
\limsup_{t\to1}\Delta_{\Z\mathcal{H}W}^{\langle a\rangle}(t)\leq 0\hbox{ with }\langle a\rangle:=\int_Y a(y)dy.
$$ 
On the other hand, since $W$ is of $G$-convex growth, i.e., there exist $\alpha,\beta>0$
 and a convex function $G:\MM^{m\times d}\to[0,\infty]$ such that
$$
\alpha G(\xi)\leq W(x,\xi)\leq\beta(1+G(\xi))\hbox{ for all }(x,\xi)\in\RR^d\times\MM^{m\times d},
$$
also is $\Z\mathcal{H} W$ and so ${\rm dom}(\Z\mathcal{H}W)=\GG$. As $\GG$ is convex and $0\in{\rm int}(\GG)$ we have 
\begin{equation}\label{MHT-EqUaTiOn-2}
t\GG\subset{\rm int}(\GG)\hbox{ for all }t\in]0,1[.
\end{equation}
 From Theorem \ref{Extension-Result-for-ru-usc-Functions}(i) and (ii) we deduce that:
 \begin{eqnarray}
&&\widehat{\Z\mathcal{H}W}(\xi)=\left\{
\begin{array}{ll}
\Z\mathcal{H}W(\xi)&\hbox{if }\xi\in{\rm int}(\GG)\\
\lim\limits_{t\to 1}\Z\mathcal{H}W(t\xi)&\hbox{if }\xi\in\partial\GG\\
\infty&\hbox{otherwise;}
\end{array}
\right.\label{MHT-EqUaTiOn-3}\\
&& \widehat{\Z\mathcal{H}W}\hbox{ is ru-usc, i.e., }\limsup\limits_{t\to1}\Delta^{\langle a\rangle}_{\widehat{\Z\mathcal{H}W}}(t)\leq 0.\label{MHT-EqUaTiOn-4}
\end{eqnarray}

\subsection*{Step 2: approximation of \boldmath$\widehat{\Z\mathcal{H}W}$\unboldmath} First of all, it is clear that
\begin{equation}\label{MHT-EqUaTiOn-5}
\lim\limits_{t\to1}\|tu-u\|_{W^{1,p}(\Omega;\RR^m)}=0.
\end{equation}
On the other hand, taking \eqref{MHT-EqUaTiOn-1}, \eqref{MHT-EqUaTiOn-2} and \eqref{MHT-EqUaTiOn-3} into account we can assert that
$$
\int_\Omega\Z\mathcal{H}W(t\nabla u(x))dx\leq\big(1+\Delta^{\langle a\rangle}_{\widehat{\Z\mathcal{H}W}}(t)\big)\int_\Omega\widehat{\Z\mathcal{H}W}(\nabla u(x))dx+\langle a\rangle\Delta^{\langle a\rangle}_{\widehat{\Z\mathcal{H}W}}(t)
$$
for all $t\in]0,1[$, and consequently
\begin{equation}\label{MHT-EqUaTiOn-6}
\limsup_{t\to1}\int_\Omega\Z\mathcal{H}W(t\nabla u(x))dx\leq\int_\Omega\widehat{\Z\mathcal{H}W}(\nabla u(x))dx
\end{equation}
because \eqref{MHT-EqUaTiOn-4} holds.

\subsection*{Step 3: approximation of \boldmath$\Z\mathcal{H}W$\unboldmath} Fix any $t\in]0,1[$. From \eqref{MHT-EqUaTiOn-3} we see that $\widehat{\Z\mathcal{H}\WW}\subset\GG$, and so $t\nabla u(x)\in{\rm int}(\GG)$ for a.a. $x\in\Omega$ because $\GG$ is convex, $0\in{\rm int}(\GG)$ and \eqref{MHT-EqUaTiOn-1} holds. Moreover, applying Lemma \ref{Fonseca-Lemma} with $L=\mathcal{H}W$, we deduce that $\Z\mathcal{H}W$ is continuous on ${\rm int}(\GG)$. From Proposition \ref{Approx-Prop-CPAF-2} it follows that there exists $\{u_{n,t}\}_n\subset\Aff(\Omega;\RR^m)$ such that:
\begin{eqnarray}
&& \lim_{n\to\infty}\|u_{n,t}-tu\|_{W^{1,p}(\Omega;\RR^m)}=0;\label{MHT-EqUaTiOn-a}\\
&& \lim_{n\to\infty}\int_{\Omega}\Z\mathcal{H}W(\nabla u_{n,t}(x))dx=\int_{\Omega}\Z\mathcal{H}W(t\nabla u(x))dx.\label{MHT-EqUaTiOn-b}
\end{eqnarray}
Fix any $n\geq 1$. As $u_{n,t}\in\Aff(\Omega;\RR^m)$ we can assert that there exists a finite family $\{U_i\}_{i\in I}$ of open disjoint subsets of $\Omega$ such that  $|\Omega\setminus \cup_{i\in I} U_i|=0$ and, for each $i\in I$, $|\partial U_i|=0$ and $\nabla u_{n,t}(x)=\xi_i$ in $U_i$ with $\xi_i\in\MM^{m\times d}$. Thus
\begin{equation}\label{MHT-EqUaTiOn-7-bis}
\int_\Omega\Z\mathcal{H}W(\nabla u_{n,t}(x))dx=\sum_{i\in I}|U_i|\Z\mathcal{H}W(\xi_i).
\end{equation}
By Proposition \ref{Prop1-for-Gamma-limsup}, for each $i\in I$, there exists $\{\phi_{i,k}\}_k\subset\Aff_0(U_i;\RR^m)$ such that: 
\begin{eqnarray}
&&\lim_{k\to\infty}\|\phi_{i,k}\|_{L^\infty(U_i;\RR^m)}=0;\label{MHT-EqUaTiOn-6-bis}\\
&&\lim_{k\to\infty}\mint_{U_i}\mathcal{H}W(\xi_i+\nabla\phi_{i,k}(x))dx=\Z\mathcal{H}W(\xi_i).\label{MHT-EqUaTiOn-7}
\end{eqnarray}
For each $k\geq 1$, define $u_{k,n,t}\in\Aff(\Omega;\RR^m)$  by
$$
u_{k,n,t}(x):=u_{n,t}(x)+\phi_{i,k}(x)\hbox{ if }x\in U_i.
$$
Then
$$
\|u_{k,n,t}-u_{n,t}\|_{L^\infty(\Omega;\RR^m)}=\max_{i\in I}\|\phi_{i,k}\|_{L^\infty(U_i;\RR^m)},
$$
and so
\begin{equation}\label{MHT-EqUaTiOn-8}
\lim_{k\to\infty}\|u_{k,n,t}-u_{n,t}\|_{L^\infty(\Omega;\RR^m)}=0
\end{equation}
by \eqref{MHT-EqUaTiOn-6-bis}. On the other hand, for each $k\geq 1$, we have
$$
\int_\Omega\mathcal{H}W(\nabla u_{k,n,t}(x))dx=\sum_{i\in I}|U_i|\mint_{U_i}\mathcal{H}W(\xi_i+\nabla\phi_{i,k}(x))dx,
$$
and consequently
\begin{equation}\label{MHT-EqUaTiOn-9}
\lim_{k\to\infty}\int_\Omega\mathcal{H}W(\nabla u_{k,n,t}(x))dx=\int_\Omega\Z\mathcal{H}W(\nabla u_{n,t}(x))dx
\end{equation}
by \eqref{MHT-EqUaTiOn-7} and \eqref{MHT-EqUaTiOn-7-bis}.

\subsection*{Step 4: approximation of \boldmath$\mathcal{H}W$\unboldmath} Fix any $k\geq 1$. As $u_{k,n,t}\in\Aff(\Omega;\RR^m)$ we can assert that there exists a finite family $\{V_j\}_{j\in J}$ of open disjoint subsets of $\Omega$ such that  $|\Omega\setminus \cup_{j\in J} V_j|=0$ and, for each $j\in J$, $|\partial V_j|=0$ and $\nabla u_{k,n,t}(x)=\zeta_j$ in $V_j$ with $\zeta_j\in\MM^{m\times d}$. Thus
\begin{equation}\label{MHT-EqUaTiOn-10}
\int_\Omega\mathcal{H}W(\nabla u_{k,n,t}(x))dx=\sum_{j\in J}|V_j|\mathcal{H}W(\zeta_j).
\end{equation}
As $\widehat{\Z\mathcal{H}I}(u)<\infty$, taking \eqref{MHT-EqUaTiOn-6}, \eqref{MHT-EqUaTiOn-b}, \eqref{MHT-EqUaTiOn-9} and \eqref{MHT-EqUaTiOn-10} into account, we can assert that $\mathcal{H}W(\zeta_j)<\infty$ for all $j\in J$. Moreover, it is clear that ${\rm dom}(\mathcal{H}W)=\GG$ because $W$ is of $G$-convex growth, hence $\zeta_j\in \GG$ for all $j\in J$. By Proposition \ref{Prop2-for-Gamma-limsup}, for each $j\in J$, there exists $\{\psi_{j,\eps}\}_\eps\subset W^{1,p}_0(V_j;\RR^m)$ such that:
\begin{eqnarray}
&&\lim_{\eps\to0}\|\psi_{j,\eps}\|_{L^p(V_j;\RR^m)}=0;\label{MHT-EqUaTiOn-11}\\
&&\lim_{\eps\to0}\mint_{V_j}W\left({x\over\eps},\zeta_j+\nabla\psi_{j,\eps}(x)\right)dx=\mathcal{H}W(\zeta_j).\label{MHT-EqUaTiOn-12}
\end{eqnarray}
For each $\eps>0$, define $u_{\eps,k,n,t}\in W^{1,p}(\Omega;\RR^m)$ by
$$
u_{\eps,k,n,t}(x):=u_{k,n,t}(x)+\psi_{j,\eps}(x)\hbox{ if }x\in V_j.
$$
Then
$$
\|u_{\eps,k,n,t}-u_{k,n,t}\|_{L^p(\Omega;\RR^m)}=\sum_{j\in J}\|\psi_{j,\eps}\|_{L^p(V_j;\RR^m)},
$$
and so
\begin{equation}\label{MHT-EqUaTiOn-13}
\lim_{\eps\to0}\|u_{\eps,k,n,t}-u_{k,n,t}\|_{L^p(\Omega;\RR^m)}=0
\end{equation}
by \eqref{MHT-EqUaTiOn-11}. On the other hand, for each $\eps>0$, we have
$$
\int_{\Omega}W\left({x\over\eps},\nabla u_{\eps,k,n,t}(x)\right)dx=\sum_{j\in J}|V_j|\mint_{V_j}W\left({x\over\eps},\zeta_j+\nabla\psi_{j,\eps}(x)\right)dx,
$$
and consequently
\begin{equation}\label{MHT-EqUaTiOn-14}
\lim_{\eps\to 0}\int_{\Omega}W\left({x\over\eps},\nabla u_{\eps,k,n,t}(x)\right)dx=\int_\Omega\mathcal{H}W(\nabla u_{k,n,t}(x))dx.
\end{equation}
by \eqref{MHT-EqUaTiOn-12} and \eqref{MHT-EqUaTiOn-10}.

\subsection*{Step 5: passing to the limit} Combining \eqref{MHT-EqUaTiOn-13}, \eqref{MHT-EqUaTiOn-8}, \eqref{MHT-EqUaTiOn-a} with \eqref{MHT-EqUaTiOn-5} and \eqref{MHT-EqUaTiOn-14}, \eqref{MHT-EqUaTiOn-9}, \eqref{MHT-EqUaTiOn-b} with \eqref{MHT-EqUaTiOn-6} we deduce that:
\begin{eqnarray}
&&\lim_{t\to1}\lim_{n\to\infty}\lim_{k\to\infty}\lim_{\eps\to0}\|u_{\eps,k,n,t}-u\|_{L^p(\Omega;\RR^m)}=0;\label{MHT-EqUaTiOn-end1}\\
&& \limsup_{t\to1}\lim_{n\to\infty}\lim_{k\to\infty}\lim_{\eps\to0}\int_\Omega W\left({x\over\eps},\nabla u_{\eps,k,n,t}(x)\right)dx\leq\int_\Omega \widehat{\Z\mathcal{H}W}(\nabla u(x))dx,\label{MHT-EqUaTiOn-end2}
\end{eqnarray}
and the result follows from \eqref{MHT-EqUaTiOn-end1} and \eqref{MHT-EqUaTiOn-end2} by diagonalization. $\blacksquare$


\end{document}